\numberwithin{equation}{section}
\numberwithin{figure}{section}
\newtheorem{theorem}{Theorem}[section]
\newtheorem{proposition}[theorem]{Proposition}
\newtheorem{lemma}[theorem]{Lemma}
\newtheorem{corollary}[theorem]{Corollary}
\newtheorem{remark}[theorem]{Remark}
\newtheorem{definition}[theorem]{Definition}
\newtheorem{example}[theorem]{Example}
\title[Nonlinear spin-exchange dynamics]{Kac's Program and Relative Entropy Decay for Nonlinear Spin-Exchange Dynamics}
\author{Pietro Caputo}
\address{Department of Mathematics and Physics, Roma Tre University, Largo San Murialdo 1, 00146 Roma, Italy.}
\email{pietro.caputo@uniroma3.it}
\author{Mario Morellini}
\address{Department of Mathematics and Physics, Roma Tre University, Largo San Murialdo 1, 00146 Roma, Italy.}
\email{mario.morellini@uniroma3.it}
\definecolor{darkblue}{rgb}{0.1,0.1,0.7}
\definecolor{darkred}{rgb}{0.7,0.1,0.1}
\DeclareMathSymbol{\leqslant}{\mathalpha}{AMSa}{"36}
\DeclareMathSymbol{\geqslant}{\mathalpha}{AMSa}{"3E}
\DeclareMathSymbol{\eset}{\mathalpha}{AMSb}{"3F}
\renewcommand{\leq}{\le}
\renewcommand{\geq}{\ge} 
\newcommand{\ent}{{\rm Ent}}
\newcommand{\cov}{{\rm Cov}}
\newcommand{\var}{{\rm Var}}
\newcommand{\tc}{\,|\,}
\newcommand{\scalar}[2]{\langle #1 , #2\rangle}
\let\a=\alpha \let\b=\beta \let\g=\gamma \let\d=\delta
\let\e=\varepsilon  \let\l=\lambda
 \let\r=\rho \let\s=\sigma \let\t=\tau
\let\m=\mu      
\let\si=\sigma    
\let\O=\Omega
\let\G=\Gamma
\let\L=\Lambda
\newcommand{\cA}{\ensuremath{\mathcal A}} 
\newcommand{\cB}{\ensuremath{\mathcal B}} 
\newcommand{\cC}{\ensuremath{\mathcal C}} 
\newcommand{\cD}{\ensuremath{\mathcal D}} 
\newcommand{\cE}{\ensuremath{\mathcal E}} 
\newcommand{\cF}{\ensuremath{\mathcal F}}
\newcommand{\cI}{\ensuremath{\mathcal I}} 
\newcommand{\cK}{\ensuremath{\mathcal K}} 
\newcommand{\cL}{\ensuremath{\mathcal L}} 
\newcommand{\cM}{\ensuremath{\mathcal M}}
\newcommand{\cP}{\ensuremath{\mathcal P}} 
\newcommand{\cQ}{\ensuremath{\mathcal Q}} 
\newcommand{\cS}{\ensuremath{\mathcal S}} 
\newcommand{\cT}{\ensuremath{\mathcal T}}
\newcommand{\bbE}{{\ensuremath{\mathbb E}} }
\newcommand{\bbN}{{\ensuremath{\mathbb N}} } 
\newcommand{\bbP}{{\ensuremath{\mathbb P}} } 
\newcommand{\bbR}{{\ensuremath{\mathbb R}} } 
\newcommand{\bbT}{{\ensuremath{\mathbb T}} }
\begin{document}

\begin{abstract}
We introduce and analyze a nonlinear exchange dynamics for Ising spin systems with arbitrary interactions. The evolution is governed by a quadratic Boltzmann-type equation that conserves the mean magnetization. Collisions are encoded through a spin-exchange kernel chosen so that the dynamics converge to the Ising model with the prescribed interaction and mean magnetization profile determined by the initial state. We prove a general convergence theorem, valid for any interaction and any transport kernel. 
Moreover, we show that, for sufficiently weak interactions, the system relaxes exponentially fast to equilibrium in relative entropy, with optimal decay rate independent of the initial condition. The proof relies on establishing a strong version of the Kac program from kinetic theory. In particular, we show that the associated Kac particle system satisfies a modified logarithmic Sobolev inequality with constants uniform in the number of particles. This is achieved by adapting the method of stochastic localization to the present conservative setting.
\end{abstract}

\maketitle
\thispagestyle{empty}

\tableofcontents

\section{Introduction and main results}

We consider Boltzmann-type evolution equations of the form  
\begin{equation}
\label{non-linear rqs0}
    \frac{d p_t}{dt} \,=\, p_t \circ p_t \,-\, p_t,
\end{equation}
where \( p_t \) is a probability distribution on a configuration space \( \Omega \), and the \emph{collision product} \( p_t \circ p_t \) is defined through a probability kernel \( Q(\tau \tc \sigma, \sigma') \).  
The kernel takes as input two independent configurations \( \sigma, \sigma' \in \Omega \), each sampled from \( p_t \), and produces an output configuration \( \tau \in \Omega \), with law
\begin{equation}
\label{non-linear rqs00}
 p_t \circ p_t(\tau) = \int p_t(d\sigma) \int p_t(d\sigma') \, Q(\tau \tc \sigma, \sigma') .
\end{equation}

A classical example is Kac's caricature of a Maxwellian gas~\cite{Kac_pr}, which captures essential features of the Boltzmann equation in a simplified setting; see also \cite{Bertini_et_al} for a discrete version of Kac's model. 
Such nonlinear equations admit a natural probabilistic representation in terms of random binary trees~\cite{Wild,mckean}.  
A key structural property is the existence of \emph{conservation laws}, associated to quantities that remain invariant under the collision product.  
Each conservation law gives rise to a family of stationary measures, which determine the possible limiting distributions of the dynamics.

For instance, in Kac's model, where \( \Omega = \mathbb{R} \) is the set of one-dimensional velocities and collisions conserve the kinetic energy, for  any initial distribution \( p_0 \) on \( \mathbb{R} \) with mean zero and finite variance one has convergence to the centered Gaussian distribution with the same variance as $p_0$.  
Understanding the \emph{rate of convergence to equilibrium} has been a longstanding question, and even in this simple setting it presents significant challenges~\cite{KAC,Vill1,Vill2,entfound,Dolera}.


In this work we focus on a \emph{combinatorial counterpart} of the Boltzmann equation, where \( \Omega \) is the Boolean cube, and the analogue of the Maxwellian equilibrium is played by a high-dimensional Gibbs measure, such as the Ising model on a graph.  
At infinite temperature, the model reduces to a version of the recombination equations commonly studied in population genetics~\cite{Sinetal,Sinetal2,CP2024nonlinear}.  
When interactions are present, however, the system becomes substantially richer, and analyzing its convergence to stationarity raises new challenges.

We consider a family of models in which a collision between two spin configurations consists of exchanging spin values at a pair of sites, selected according to a prescribed transport kernel.  
The conservation laws correspond to the magnetization profile of the distribution along the irreducible components of the transport kernel.  
By tuning the transport kernel, one can realize any desired magnetization profile.  

Our first result is a general convergence theorem, valid for arbitrary interaction strength and for all choices of transport kernel.  
We then turn to the question of quantitative convergence rates.  
A discrete-time version of the model, in the special case where the conservation law is given by the single-site magnetization profile, was recently analyzed in~\cite{AC2024nonlinear}, providing convergence rates in total variation distance at high temperature.  
Here we consider arbitrary magnetization profiles and obtain finer quantitative bounds in terms of relative entropy rather than total variation distance.


Our main tool is an adaptation of the \emph{Kac program} to this combinatorial setting, in the spirit of what has been developed for Kac's original model of a Maxwellian gas~\cite{entfound,kacsprog}.  
In contrast to that classical setting, we show that for the present model one can fully implement Kac's program in a strong quantitative sense.  
Specifically, under a suitable high-temperature assumption, we establish a uniformly positive rate of exponential decay of relative entropy, independent of the initial condition.  
To our knowledge, this provides the first example of a high-dimensional system with genuine interactions where such a strong form of Kac's program can be implemented.


We now proceed to a detailed description of the models and our main results.

\subsection{Spin system}
Let \(\Omega = \{-1, +1\}^n\), for some positive integer $n$, denote the set of spin configurations. 
We consider Ising systems defined through the Gibbs measure
\begin{equation}\label{Ising Model def}
    \mu_{J,h}(\sigma) = \frac{1}{Z_{J,h}} \exp\left(  \textstyle{\frac{1}{2}\sum_{i,j \in [n]}} J_{i,j} \sigma_i \sigma_j +  \textstyle{\sum_{i\in [n]}} h_i \sigma_i \right),\qquad \sigma\in\Omega,
\end{equation}
where \(J = (J_{i,j})_{i,j \in [n]}\) is the interaction matrix, \(h = (h_i)_{i \in [n]}\) is the external magnetic field, and \(Z_{J,h}\) is the partition function ensuring normalization, so that $\mu_{J,h}$ is a probability over $\Omega$. Throughout, we use the notation $[n]=\{1,\dots,n\}$. 
We assume that \(J\) is a real symmetric matrix, and $h$ is a real vector. When \(h=0\) we will simply write \(\mu_J\).
When $J=a A_G$, where $a\in\bbR$ and $A_G$ is the adjacency matrix of an undirected graph $G$ with $n$ vertices, then $\mu_{J,h}$ represents the Ising model on the graph $G$ with external field $h$, and with interaction strength $a$. If $a>0$ the model is called \emph{ferromagnetic}, while it is called \emph{antiferromagnetic} if $a<0$. 
A classical example is the Curie-Weiss model (or mean-field Ising model), where the interaction matrix is given by \(J_{i,j} = \beta/n\) for all \(i,j \in [n]\), with \(\beta > 0\) denoting the inverse temperature, which corresponds to the Ising model on the complete graph with interaction strength $a=\beta/n$. Other examples include disordered systems such as the Sherrington-Kirkpatrick model, where $J_{i,j}=\frac{\beta}{\sqrt n} g_{ij}$, where $g_{ij}=g_{ji}$, $i<j$, are i.i.d.\ standard normal random variables, or the Ising model on a random $d$-regular graph with interaction strength $a\in\bbR$, where $d\ge 3$ is a fixed integer, in which case $J=a A_G$, where $A_G$ is the adjacency matrix of a random  $d$-regular graph with $n$ vertices.


\subsection{The collision kernel}
A collision between two spin configurations $(\s,\s')\in\Omega\times\Omega$ consists in the exchange of the $\ell$-th spin of $\si$ with the $k$-th spin of $\si'$, that is 
\begin{equation}\label{gxk1}
(\sigma, \sigma') \mapsto \left(S_\ell(\sigma, \sigma'_k),\ S_k(\sigma', \sigma_\ell)\right),
\end{equation}
where \(S_\ell: \O\times\{-1,1\}\to \Omega \) is the configuration defined as
\[
S_\ell(\sigma, a)_i = 
\begin{cases}
    \sigma_i & \text{if } i \neq \ell, \\
    a & \text{if } i = \ell.
\end{cases}
\]
The sites $(\ell,k)$ at which the exchange occurs are chosen by picking $\ell$ uniformly at random in $[n]$ and then by picking $k$ according to a prescribed {\em symmetric stochastic $n\times n$ matrix},  denoted $\cK(\ell,k)$, which we refer to as the transport kernel.  In order to guarantee the reversibility of our system with respect to the Ising measure, the proposed exchange \eqref{gxk1} is accepted with probability 
 \begin{equation}\label{gxk3}
    p_J(\ell,k\,|\,\sigma,\sigma')=
       \frac{ \mu_J(S_\ell(\sigma, \sigma'_k))\, \mu_J(S_k(\sigma', \sigma_\ell))}{ \mu_J(\sigma)\, \mu_J(\sigma')+ \mu_J(S_{\ell}(\sigma, \sigma'_{k}))
      \, \mu_J(S_{k}(\sigma', \sigma_{\ell}))},
\end{equation}
and is otherwise rejected. 
Summarizing, the pair $(\s,\s')\in\Omega\times\Omega$ produces the new pair $(\t,\t')\in\Omega\times\Omega$ according to the probability kernel on $\Omega\times\Omega$ given by 
\begin{equation}\label{gxk2}
    \mathcal{Q}_J^\mathcal{K}(\sigma, \sigma'; \tau, \tau') = \frac1n\sum_{\ell, k = 1}^n \mathcal{K}(\ell, k)\ Q_J^{\ell,k}(\sigma, \sigma'; \tau, \tau'),
\end{equation}
where, for each pair of sites $\ell,k\in[n]$, $Q_J^{\ell,k}$ is the probability kernel on $\Omega\times\Omega$ defined as
\begin{align}\label{gxk4}
Q_J^{\ell,k}(\sigma, \sigma'; \tau, \tau')
     & =    p_J(\ell,k\,|\,\sigma,\sigma') \mathbf{1}(\tau = S_\ell(\sigma, \sigma'_k),\tau' = S_k(\sigma', \sigma_\ell)) \;+\;
    \nonumber \\ & \qquad\quad + \;( 1-  p_J(\ell,k\,|\,\sigma,\sigma')) \mathbf{1}(\tau = \sigma,\tau'=\sigma'),
    \end{align}
    and we use the notation ${\bf 1}(E)$ for the indicator function of the event $E$. 
If $\cA$ denotes the set of equivalence classes corresponding to the irreducible components of the kernel $\cK$,
it follows that $\cQ_J^{\cK}$ satisfies the detailed balance condition with respect to $\mu_{J,h}\otimes\mu_{J,h}$, for any choice of external fields $h\in\bbR^n$ such that $h$ is constant over every subset $A\in \cA$:
\begin{equation}\label{gxk5}
\mu_{J,h}(\sigma) \mu_{J,h}(\sigma')\cQ_J^{\cK}(\sigma, \sigma'; \tau, \tau') = \mu_{J,h}(\tau) \mu_{J,h}(\tau') \cQ_J^{\cK}(\tau, \tau'; \sigma, \sigma'),
\end{equation}
for arbitrary $(\s,\s'),(\t,\t')\in\Omega\times\Omega$ and $h\in\bbR^n$ such that $h_i=h_j$ for all $i,j\in A$, for all $A\in \cA$; see Lemma \ref{lem:db}. 

In conclusion, for any given interaction $J\in\bbR^{n\times n}$, and any transport kernel $\cK$, we have constructed a probability kernel $\cQ_J^\cK$ on $\Omega\times\Omega$, which satisfies the detailed balance condition with respect to $\mu_{J,h}\otimes\mu_{J,h}$, independently of the choice of $h$, provided $h$ is constant over the irreducible components of $\cK$.

\subsection{Nonlinear dynamics}
Let $\cP(\Omega)$ denote the set of probability measures on $\Omega$.
Given the collision kernel $\cQ_J^\cK$, the nonlinear evolution on $\cP(\Omega)$
is defined by the dynamical system
\begin{equation}
\label{non-linear rqs}
    \frac{dp_t}{dt} = p_t \circ p_t - p_t, \qquad p_0 = p,
\end{equation}
for some initial value \(p \in \mathcal{P}(\Omega)\), where \(p \circ q \in \mathcal{P}(\Omega)\), for \(p, q \in \mathcal{P}(\Omega)\), denotes the commutative collision product defined as
\begin{equation}\label{collision operator}
p \circ q \,(\tau) = \frac{1}{2}\sum_{\sigma, \sigma',\t'\in \O}\big(p(\sigma) q(\sigma')+p(\sigma')q(\sigma)\big)\mathcal{Q}_J^{\cK}(\sigma, \sigma'; \tau, \tau').
\end{equation}
Note that this has the form \eqref{non-linear rqs00} with 
\[
Q(\t\tc\si,\si') = \textstyle{\sum_{\t'\in\O}}\,\mathcal{Q}_J^{\cK}(\sigma, \sigma'; \tau, \tau')\,.
\]
For any $J\in\bbR^{n\times n}$ and any stochastic matrix $\cK$, the Cauchy problem \eqref{non-linear rqs}
has a unique solution $p_t\in \cP(\O)$, $t\ge 0$, for any initial state $p \in \mathcal{P}(\Omega)$. This can be seen by standard existence and uniqueness results, see e.g.\ \cite{BBS} for related statements. Moreover, a  representation of the solution based on random binary  trees can be obtained, see e.g.\ \cite{CCG00,CP2024nonlinear,caputo2025cutoff} for similar constructions. 
Here, we are interested in the structure of the stationary states of \eqref{non-linear rqs}, that is the set of $p \in \mathcal{P}(\Omega)$ such that $p_t\equiv p$, and in the analysis of the convergence to stationarity.  
As we will see, the set $\cA$ of irreducible components of the kernel $\cK$ determines the conservation laws of the dynamical system \eqref{non-linear rqs} and the set of stationary states is fully determined by $J$ and $\cA$. The following are some examples of kernels $\cK$ to keep in mind.

\begin{example}[Single-Site  Dynamics] \label{ex:ss}  
If \(\cK(i,j)={\bf 1}(i=j)\) for all \(i,j\in [n]\), 
exchanges are allowed only between spins at the same site. With the notation \eqref{gxk4}, the 
collision kernel is given by
\begin{equation}
      \mathcal{Q}_J^\mathcal{K}(\sigma, \sigma'; \tau, \tau') = \frac1n\sum_{\ell=1}^n Q_J^{\ell,\ell}(\sigma, \sigma'; \tau, \tau')\,.
\end{equation}
In this case the partition associated to \(\cK\) is made of singletons: \(\cA=(\{i\}, i\in [n])\). The system \eqref{non-linear rqs} is the continuous-time counterpart of the nonlinear Single-Site Dynamics introduced in \cite{AC2024nonlinear}.
\end{example}

\begin{example}[Mean-Field Exchange Dynamics]   \label{ex:mf}
If \(\cK(i,j)=1/n\) for all \(i,j\in[n]\), the proposed exchanges are uniformly chosen between all pairs of sites. 
The 
collision kernel is given by
\begin{equation}
      \mathcal{Q}_J^\mathcal{K}(\sigma, \sigma'; \tau, \tau') = \frac1{n^2}\sum_{\ell,k=1}^n Q_J^{\ell,k}(\sigma, \sigma'; \tau, \tau')\,.
\end{equation}
In this case the partition $\cA$ 
consists  of a single component \(\cA=\{[n]\}\), and the process \eqref{non-linear rqs} is a nonlinear analogue of a mean-field Kawasaki dynamics. 
\end{example}

The two examples discussed above are both covered by the general class of examples defined as follows. 

\begin{definition}[Multi-component Mean-Field Exchange Dynamics]   \label{ex:mcmf}
Fix an arbitrary partition $\cA$ of $[n]$, and define the matrix $\cK=\cK_\cA$ by 
\begin{equation}\label{eq:meanf0}
        \mathcal{K}_\cA(\ell, k) = 
        \begin{cases}
            \frac1{|A|}, & \text{if } \ell, k \in A \text{ for some } A \in \mathcal{A}, \\
            0, & \text{otherwise},
        \end{cases}
    \end{equation}
    In this case, the nonlinear dynamics associated to the kernel $\mathcal{Q}_J^\mathcal{K}$ in \eqref{gxk2} is referred to as the Nonlinear Multi-component Mean-Field Exchange Dynamics.
 \end{definition}
\begin{definition}\label{def:Amag}
Given a partition $\cA$ of $[n]$, and $p \in \mathcal{P}(\Omega)$, we define the $\cA$-\emph{magnetization profile} as the  vector
\begin{equation}\label{eq:Amag}
m_\cA(p) = (m(p,A)\,,\, A\in \cA)\,,\qquad  m(p,A) = \frac1{|A|}\sum_{\ell\in A}p[\si_\ell]\,,
            \end{equation}
            where $p[\si_\ell]$ is the expected value of the spin at site $\ell$ according to $p$.
            The vector $m_\cA(p) $ is in  $[-1,1]^\cA$, and we say that $p$ is $\cA$-{\em regular} if $m_\cA(p)\in(-1,1)^\cA$, that is if $|m(p,A)|\neq 1 $ for all $A\in \cA$. We write $\mathcal{P}_\cA(\Omega)$ for the set of $p \in \mathcal{P}(\Omega)$ that are $\cA$-regular. 
\end{definition}
We note that in Example \ref{ex:ss}, specifying the vector $m_\cA(p)\in[-1,1]^n$ is
equivalent to giving all the single-site marginals of $p$, while in Example \ref{ex:mf}, $m_\cA(p)\in[-1,1]$ is just the global mean magnetization of $p$.  

The fist result is the following general convergence theorem, which  summarizes the main properties of our nonlinear dynamical system.
We use the notation 
$\G_\cA\subset \bbR^n$ for the set of external fields that are constant over each element of the partition $\cA$: 
\[
\G_\cA = \{h\in\bbR^n:\; h_i=h_j,  \; \forall i,j\in A\,,\; \forall A\in\cA\}
.\]
It can be shown that for each $p\in\mathcal{P}_\cA(\Omega)$ there is a  unique vector $h\in\G_\cA$ depending only on $J$ and $m_\cA(p) $, such that  
$m_\cA(p) = m_\cA(\mu_{J,h})$; see Lemma \ref{lem:uni}. 
\begin{theorem}\label{th:basic}
Fix a symmetric matrix $J\in\bbR^{n\times n}$ and a symmetric stochastic $n\times n$ matrix $\cK$. Let $\cA$ denote the partition of $[n]$ induced by the irreducible components of $\cK$. 
 A probability $p \in \mathcal{P}_\cA(\Omega)$ is a stationary state if and only if $p=\mu_{J,h}$ for some $h\in\G_\cA$.
 Moreover, 
 the expected values
$m(p_t,A)$, $A\in \cA$,
are independent of $t\ge 0$. Finally, for any initial state $p\in \mathcal{P}_\cA(\Omega)$, one has the convergence 
\begin{equation}
\label{eq:conv}
\|p_t - \mu_{J,h}\|_{\rm TV}\; \longrightarrow \;0\,,\qquad t\to\infty\,,
\end{equation}
where $\|\cdot\|_{\rm TV}$ denotes the total variation distance and 
$h$ is the unique vector $h\in\G_\cA$ such that 
$m_\cA(p) = m_\cA(\mu_{J,h})$. 
\end{theorem}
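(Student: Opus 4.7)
The plan is to proceed in the natural order: conservation of $m_\cA(p_t)$ first, then sufficiency in the stationarity characterization, the Boltzmann-type entropy dissipation, and finally a LaSalle argument that delivers both the convergence and the necessity part. Conservation follows from the pointwise fact that each admissible collision $(\sigma, \sigma') \mapsto (S_\ell(\sigma, \sigma'_k), S_k(\sigma', \sigma_\ell))$ with $\ell, k \in A \in \cA$ only exchanges spins within the component $A$, so $\sum_{i \in A}(\sigma_i + \sigma'_i)$ is preserved. Integrating this against $p \otimes p$ and $\cQ_J^\cK$, and using the natural symmetry $\cQ_J^\cK(\sigma, \sigma'; \tau, \tau') = \cQ_J^\cK(\sigma', \sigma; \tau', \tau)$ to identify both single-particle marginals of the output with $p \circ p$, yields $m(p \circ p, A) = m(p, A)$ and hence $\tfrac{d}{dt} m(p_t, A) = 0$. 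The sufficiency claim is immediate from detailed balance: if $p = \mu_{J, h}$ with $h \in \Gamma_\cA$, summing \eqref{gxk5} over $(\sigma, \sigma', \tau')$ and using the stochasticity of $\cQ_J^\cK$ gives $p \circ p = p$.

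For convergence, let $\mu = \mu_{J, h}$ with $h \in \Gamma_\cA$ the unique field (Lemma~\ref{lem:uni}) matching $m_\cA(p_0)$. A standard calculation using \eqref{non-linear rqs}, symmetrization in $(\sigma, \sigma')$ and $(\tau, \tau')$, and reversibility of $\mu \otimes \mu$ under $\cQ_J^\cK$, yields
\begin{equation*}
\tfrac{d}{dt} H(p_t \mid \mu) = -\tfrac{1}{4} \sum_{\sigma, \sigma', \tau, \tau'} \mu(\sigma) \mu(\sigma')\, \cQ_J^\cK(\sigma, \sigma'; \tau, \tau')\, \bigl( f(\sigma, \sigma') - f(\tau, \tau') \bigr) \log \tfrac{f(\sigma, \sigma')}{f(\tau, \tau')},
\end{equation*}
where $f = (p_t \otimes p_t)/(\mu \otimes \mu)$. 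Since $(a - b)(\log a - \log b) \geq 0$, the right-hand side is nonpositive, so $H(p_t \mid \mu)$ decreases monotonically to some $H^\infty \geq 0$. By compactness of $\cP(\Omega)$ and continuity of the nonlinear flow, any subsequential limit $p^\infty$ of $p_t$ satisfies $H(p^\infty_s \mid \mu) = H^\infty$ for every $s \geq 0$, so the dissipation vanishes identically along the trajectory from $p^\infty$: $f^\infty(\sigma, \sigma') = f^\infty(\tau, \tau')$ whenever $\cQ_J^\cK(\sigma, \sigma'; \tau, \tau') > 0$. The linear two-particle chain driven by $\cQ_J^\cK$ is irreducible on each level set of the conserved quantity $\vec m(\sigma) + \vec m(\sigma')$ (within each class $A$ the admissible pair-exchanges generate all permutations of the $2|A|$ spin slots, and the Ising acceptance probability $p_J$ in \eqref{gxk3} is strictly positive), so $f^\infty$ depends only on $\vec m(\sigma) + \vec m(\sigma')$. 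Setting $\sigma = \sigma'$ then forces $p^\infty/\mu = \Psi \circ \vec m$ for some $\Psi \colon R \to \R_{\geq 0}$ satisfying the multiplicative identity $\Psi(x) \Psi(y) = \Phi(x + y)$ on the admissible magnetization lattice $R$.

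The decisive combinatorial step is to classify such $\Psi$. The multiplicative equation implies that $\supp(\Psi)$ is closed under the operation $(x, y) \mapsto \{z \in R \colon x + y - z \in R\}$. Combined with the $\cA$-regularity of $p^\infty$ (inherited from $p_0$ via conservation of $m_\cA$), which forbids $\supp(\Psi) \subseteq \{x_A = |A|\}$ and $\supp(\Psi) \subseteq \{x_A = -|A|\}$ for every $A$, a lattice-iteration argument shows $\supp(\Psi) = R$. With $\Psi > 0$ on $R$, the ratio $\Psi(x + u)/\Psi(x)$ is independent of $x$ and defines a multiplicative function on $R - R$, hence exponential, so $\Psi(x) = c\, e^{a \cdot x}$ for some $a \in \R^\cA$. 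Unfolding, $p^\infty = \mu_{J, h + a'}$ with $a'_i = a_{A(i)} \in \Gamma_\cA$; by conservation of $m_\cA$ and the uniqueness in Lemma~\ref{lem:uni}, $a' = 0$, so $p^\infty = \mu$. All subsequential limits therefore coincide with $\mu$, and Pinsker's inequality converts $H(p_t \mid \mu) \to 0$ into the total variation statement \eqref{eq:conv}. The same classification applied at $t = 0$ yields the necessity part: a stationary $p \in \cP_\cA(\Omega)$ has vanishing dissipation at $t = 0$ and is therefore $\mu_{J, h'}$ for the unique $h' \in \Gamma_\cA$ matching $m_\cA(p)$.

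The main obstacle is this combinatorial classification, specifically controlling how zeros of $\Psi$ propagate through the multiplicative equation and showing that $\cA$-regularity rules out any partially supported solution on the lattice $R$. The remaining ingredients (the symmetrized H-theorem calculation, irreducibility of the two-particle chain on level sets, and LaSalle-type compactness) are essentially standard.
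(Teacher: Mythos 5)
Your proposal takes a genuinely different and more elementary route than the paper. The paper reaches the conclusion by first proving an \emph{irreducibility lemma} (Lemma~\ref{lem:irre}) via a substantial machinery: the binary-tree representation of the solution, a reduction to discrete time and to $J=0$, and the Marked Partition Process analysis. That lemma shows every $\cA$-regular solution becomes strictly positive at a finite time; combined with the characterization of \emph{positive} stationary states (Lemma~\ref{charact-of-stationarity}, which telescopes $f(\sigma)$ into a product form) and a LaSalle argument, the theorem follows. You instead attempt to bypass all of this by classifying possibly-degenerate stationary states directly from the invariance relation $g(\sigma)g(\sigma') = g(\tau)g(\tau')$, reducing to a multiplicative functional equation on the magnetization lattice. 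If it closed, this would be a considerably lighter proof.

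There is, however, a genuine gap in a step you dismiss as ``essentially standard'': the claim that the admissible pair-exchanges within a class $A$ generate the full symmetric group on the $2|A|$ spin slots, so that the two-particle chain is irreducible on each level set of $\vec m(\sigma)+\vec m(\sigma')$. Every allowed move transposes a slot of $\sigma$ with a slot of $\sigma'$, so the relevant graph on the $2|A|$ slots has vertex set $\{1\}\times A\cup\{2\}\times A$ and edges $(1,\ell)\sim(2,k)$ whenever $\cK(\ell,k)>0$; this is the bipartite double cover of $\cK|_A$, and for a connected graph $\cK|_A$ the double cover is connected \emph{if and only if} $\cK|_A$ is non-bipartite (e.g.\ has a self-loop). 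Theorem~\ref{th:basic} allows arbitrary symmetric stochastic $\cK$, including bipartite ones: with $A=\{1,2\}$ and $\cK(1,2)=\cK(2,1)=1$, $\cK(1,1)=\cK(2,2)=0$, the pair $((+1,-1),(-1,+1))$ is fixed by every admissible exchange (both candidate swaps compare equal spins), and is thus isolated in its level set even though that level set contains other states. Consequently the two-particle chain carries extra conserved quantities beyond $\vec m(\sigma)+\vec m(\sigma')$, and neither ``$f^\infty$ depends only on $\vec m(\sigma)+\vec m(\sigma')$'' nor the derived multiplicative equation $\Psi(x)\Psi(y)=\Phi(x+y)$ follows as stated. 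One can probably repair this by working directly with the diagonal relation $g(\sigma)^2=g(\tau)g(\tau')$ and with the moves that are realizable at well-chosen representatives of given magnetization, but as written the derivation does not close. Note that the paper silently avoids this periodicity issue: the Marked Partition Process runs the \emph{lazy} chain $\widehat\cK=\tfrac12 I+\tfrac12\bar\cK$, which is aperiodic by construction.

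Separately, the lattice-closure step (showing $\supp\Psi=R$ from $\cA$-regularity) is left as a black box. It does appear to work — the closure forces $\supp\Psi$ to be a product $\prod_A \pi_A(\supp\Psi)$ with each factor either all of $R_A$ or a single extreme point — but it is not trivial and would need to be spelled out before the argument can be considered complete.
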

\begin{remark}\label{rem:hinfinity}
We note that the requirement that $p$ be $\cA$-regular in Theorem \ref{th:basic} is not essential, as it can be removed if one allows the external fields $h$ to take the value $\pm\infty$. 
\end{remark}
\subsection{Entropy decay}
The above theorem establishes convergence to stationarity, but it says nothing about the speed of the convergence. The next results address this question, under a high-temperature assumption on the interaction matrix $J$. Moreover,  we restrict to the case of Multi-component  Mean Field Dynamics, that is we take  $\cK=\cK_\cA$ as in Definition \ref{ex:mcmf} with an arbitrary choice of the partition $\cA$.

Given $\nu,\mu\in\cP(\O)$, we write $H(\nu\,|\,\mu)$ for the relative entropy, or KL-divergence,  of $\nu$ with respect to $\mu$, that is  
\begin{equation}\label{relative entropy1}
    H(\nu\,|\,\mu) =
\begin{cases}
    \sum_{\si\in\O}\nu(\si)\log(\nu(\si)/\mu(\si)) & \text{if} \ \ \nu \ll \mu, \\
    +\infty & \text{otherwise},
\end{cases}
\end{equation}
where $\nu\ll \mu$ denotes absolute continuity of $\nu$ with respect to $\mu$.

Our main quantitative result is formulated as follows.   
 \begin{theorem}\label{Central Theorem} 
Let \(J\in\bbR^{n\times n}\) be a nonnegative definite matrix with largest eigenvalue \(\lambda(J) < \frac{1}{2}\), and let  \(\overline{J}=\max_{i\in [n]} \sum_{j\in [n]}|J_{i,j}|\). 
Then, 
for any partition $\cA$ of $[n]$, the nonlinear system \eqref{non-linear rqs} defined by $J$ and $\cK$, with $\cK=\cK_\cA$ given in \eqref{eq:meanf0}, satisfies 
\begin{equation}\label{relative entropy01}
    H(p_t\,|\,\mu_{J,h}) \le H(p\,|\,\mu_{J,h})\,e^{-\a\, t}\,,\qquad t\ge 0, 
\end{equation}
for any $p\in \mathcal{P}_\cA(\Omega)$, where $h$ is the unique vector $h\in\G_\cA$ such that 
$m_\cA(p) = m_\cA(\mu_{J,h})$, and the positive constant $\a$ satisfies 
  \begin{equation}
        \alpha \geq \tfrac1{4n}(1 - 2\lambda(J))^2\,e^{-16\overline{J}}
\,.        
   \end{equation}
\end{theorem}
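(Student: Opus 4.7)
The plan is a quantitative implementation of Kac's program: associate to the nonlinear equation an $N$-particle Kac system, establish a modified logarithmic Sobolev inequality (MLSI) for it with rate uniform in $N$, and then transfer this inequality to the nonlinear limit via propagation of chaos. The $N$-particle system is the Markov chain on $\O^N$ whose generator selects an ordered pair $(i,j)$ with $i\neq j$ uniformly at random and applies the collision kernel $\cQ_J^\cK$ to the pair $(\s^{(i)},\s^{(j)})$. Since the exchange move conserves the $\cA$-magnetization profile of the colliding pair, the chain conserves the empirical $\cA$-magnetization profile $M_\cA^N$ at the particle level. By the detailed balance condition \eqref{gxk5}, for every value of $M_\cA^N$ the chain is reversible with respect to $\mu_{J,h}^{\otimes N}$ conditioned on that value, with $h\in\G_\cA$ determined by $M_\cA^N$ through Theorem~\ref{th:basic}.

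The heart of the argument is to prove an MLSI for this conditioned particle system with a constant matching the one claimed in the theorem and independent of $N$. I would adapt the stochastic localization technique for Ising measures developed by Eldan--Koehler--Zeitouni and Bauerschmidt--Bodineau. In their setting, a martingale tilt progressively decomposes $\mu_{J,h}$ into tilted measures that become nearly product as localization proceeds; the hypothesis $\lambda(J)<1/2$ ensures that the tilted log-densities remain uniformly log-concave, yielding a spectral gap of order $(1-2\lambda(J))^2$, while a Radon--Nikodym comparison with a product reference measure contributes the factor $e^{-O(\bar{J})}$. The novelty here is to carry out the localization inside the affine subspace defined by the conservation law: the tilt must be constrained to preserve $M_\cA^N$, and the elementary entropy production bound at each step must be phrased in terms of pair-exchange moves rather than single spin flips. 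After localization the problem reduces to an MLSI for a low-dimensional, constrained, essentially Gaussian object, which one closes by bounding the two-site exchange Dirichlet form against the entropy of a conditional pair.

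The final step is to pass from the $N$-particle MLSI to entropy decay for the nonlinear equation. Taking the chaotic initial law $p^{\otimes N}$, subadditivity of relative entropy together with a quantitative propagation-of-chaos estimate in the spirit of Mischler--Mouhot allow one to transfer the exponential rate from $H(p_t^N \,|\, \nu^N)$ down to $H(p_t \,|\, \mu_{J,h})$, uniformly in $N$. Letting $N\to\infty$ then yields \eqref{relative entropy01}; the factor $1/n$ in the claimed rate is explained by the per-site normalization of the exchange generator in \eqref{gxk2}.

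The main obstacle is the middle step: adapting stochastic localization to a conservative setting. The unconstrained theory relies crucially on tilts ranging freely in $\R^n$, so the tilted measures lose correlation structure rapidly and collapse to product form. Restricting the tilt to the magnetization-preserving subspace breaks this rapid decoupling and forces one to track residual correlations along the constrained flow and show that they decay fast enough to preserve the $(1-2\lambda(J))^2$ rate. A secondary difficulty is that the elementary Dirichlet form for Kawasaki-type pair exchanges does not tensorize as cleanly as its Glauber counterpart, so one must set up an approximate block-factorization of entropy for pair moves that is compatible with the localization decomposition; this is the step where the $e^{-16\bar{J}}$ factor enters quantitatively.
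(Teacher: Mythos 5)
Your high-level plan coincides with the paper's: build the Kac $N$-particle system, prove an MLSI for it with constant uniform in $N$ and in the conserved profile, and transfer this to the nonlinear equation. The paper follows exactly this road map (Theorem~\ref{th:MLSI}, then Section~\ref{sec:kac}). However, the step you flag as the ``main obstacle'' is based on a misreading of how stochastic localization is used here, and that misreading would lead to a dead end.

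You propose to run the localization \emph{inside the affine subspace defined by the conservation law}, i.e.\ to constrain the tilt to preserve $M_\cA^N$, and you worry that this breaks the rapid decoupling that drives the unconstrained theory. The paper does something different and much simpler: it applies an \emph{unconstrained} stochastic localization $y_t\in\bbR^{L}$ to the conditioned measure $\nu_{L,M}=\mu_{\L,w}(\cdot\tc\O_{L,M})$ directly (the indicator ${\bf 1}(\eta\in\O_{L,M})$ is baked into $\varrho_t$, cf.\ \eqref{stocloc definition}). At time $t=1$ the interaction has been cancelled and $\varrho_1$ is a Bernoulli product measure \emph{conditioned on the magnetization constraint}. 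Such a conditioned product measure is Strong Rayleigh (Borcea--Br\"and\'en--Liggett), hence strongly log-concave, hence the Down-Up walk for $\varrho_1$ satisfies an MLSI with constant $1$ by the Cryan--Guo--Mousa / Anari et al.\ results. The constraint is therefore absorbed at the endpoint of the localization, not tracked along the flow; there is no need for a constrained tilt and no residual correlation to control beyond the standard covariance bound (which, in the conservative case, is handled via the law of total covariance plus Brascamp--Lieb, exactly as in Proposition~\ref{prop_cov}, with the extra observation that $\cov[\cT_v\mu]$ has operator norm $\le 2$ because $\cT_v\mu$ is negatively correlated). Your worry about a Kawasaki-type Dirichlet form failing to tensorize is likewise resolved by the strong log-concavity at $t=1$: the factorization in Proposition~\ref{th:tenso} is precisely what plays the role of Glauber-type entropy tensorization in this conservative setting.

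The final transfer step in your proposal is also off in a way that matters. You propose to start the $N$-particle dynamics from $p^{\otimes N}$ and transfer the time-evolved entropy decay via a quantitative propagation-of-chaos estimate in the Mischler--Mouhot style. That is harder than what is needed: the paper instead transfers the \emph{functional inequality} itself. It restricts to the canonical tensor product $\gamma_N(\nu)=\nu^{\otimes N}(\cdot\tc\O_{N,\r})$, plugs $F_N=\gamma_N(\nu)/\gamma_N(\mu)$ into the particle MLSI, and uses Entropic Chaos (Proposition~\ref{Entropic Chaos}, via the local CLT) and Fisher Chaos (Proposition~\ref{Fisher Chaos}) to obtain, in the $N\to\infty$ limit, the Nonlinear MLSI $\cD_\mu(f)\ge\a\,\mu[f\log f]$. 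The Gr\"onwall step then closes the argument without ever requiring uniform-in-time control of the evolved $N$-particle law. You should also note that the factor $1/n$ in $\a$ does not come from per-site normalization of the collision kernel alone: it comes from the entropy block factorization over the $|\cA|$ stripes together with the $1/n$ in the Dirichlet-form decomposition \eqref{Dirichlet form2}, while $e^{-16\bar J}$ arises from the pointwise comparison \eqref{RatesDU2} between the exchange rates $r_{J,h,\cdot,\cdot}$ and the Down-Up rates, which needs both the interaction $\bar J$ and the induced boundary field $\bar h_A\le\bar J$.
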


\begin{remark}\label{rem:pinsk}
We note that the estimate \eqref{relative entropy01} implies exponential decay in total variation as well. Indeed,  by Pinsker's inequality, 
$\|p_t-\mu_{J,h}\|^2_{\rm TV} \le\frac12 H(p_t\,|\,\mu_{J,h})$. Using $H(p\,|\,\mu_{J,h})\le \max_{\si\in\O}\log (1/\mu_{J,h}(\si))$, which can be bounded by $Cn$, where $
C=\l(J) + 2\overline{h}+\log(2)$ and $\overline{h}=\max_{i\in [n]} |h_i|$, it follows that, in the setting of Theorem \ref{Central Theorem}, for all $p\in\cP_\cA(\O)$, one has
  \begin{equation}\label{eq:tvbou}
 \|p_t-\mu_{J,h}\|_{\rm TV} \le  \,\sqrt{\frac{C\, n}2}\,e^{-\a\, t/2}\,.
 \end{equation}
 \end{remark}
In particular, Theorem \ref{Central Theorem} is a quantitative version of Theorem \ref{th:basic} under the additional assumption on the spectral radius of $J$, which can be regarded as a high temperature requirement. It is worthy of note that the rate $\a$ is bounded below independently of the choice of partition $\cA$. In the single-site case of Example \ref{ex:ss} this reproduces essentially the same rate that was obtained,   under a more restrictive high temperature constraint, for the discrete-time version of the single-site model in \cite{AC2024nonlinear}.

By comparison with the non-interacting case $J=0$, we note that the rate $\a$ has the optimal dependence with respect to the dimension $n$, up to a constant factor. For instance, in the single-site case, one can use a coupon collector argument to show that if $J=0$ and the initial distribution $p$ is all $+1$ with probability $1/2$  and all $-1$ with probability $1/2$, then the left hand side in \eqref{eq:tvbou} is at least $c\,e^{-\frac{t}{c\,n}}$, $t\ge 0$, for some absolute constant $c>0$; see \cite{AC2024nonlinear}. 


The estimate in Theorem \ref{Central Theorem} 
is equivalent to a certain nonlinear logarithmic Sobolev inequality, which provides a lower bound on the relative entropy dissipation in terms of the relative entropy; see Section \ref{sec:nmlsi} below. 
Rather than establishing such a bound directly, to prove Theorem \ref{Central Theorem} we follow the strategy introduced by Mark Kac in his seminal 1956 paper \cite{KAC}, that is we 
approximate the nonlinear evolution of one particle by a linear mean-field type Markov
process involving a large number of particles.

\subsection{Kac's program}\label{subsec:kac}
We consider a system of \(N \in \mathbb{N}\) {\em particles} \(\{\sigma(i), i = 1, \ldots, N\}\), where each \(\sigma(i)=(\si_1(i),\dots,\si_n(i))\) is a spin configuration  in \(\Omega = \{-1, +1\}^n\). 
In words, the process is described as follows. Independently, each ordered pair of particles $(i,j)$, including $i=j$, is updated at the arrival times of a Poisson process with rate $1/N$. When an update occurs at particles $(i,j)$, a site $\ell$ is picked uniformly at random,  a site $k$ is picked according to the transport kernel $\cK(\ell,k)$, and then with probability $\hat p_J(\ell,k\tc\si(i),\si(j))$ the $\ell$-th spin of the $i$-th particle is exchanged with the $k$-th spin of the $j$-th particle, 
while with probability $1- \hat p_J(\ell,k\tc\si(i),\si(j))$ the particle configuration is  left unchanged. 
Using the notation \eqref{gxk1}, the exchange can be written as
\[
(\si(i),\si(j))\mapsto (S_\ell(\sigma(i), \sigma(j)_k)\,,\,S_k(\sigma(j), \sigma(i)_\ell),
\]
with the understanding that when $i= j$ this is replaced by  $\si(i)\mapsto \si(i)^{\ell,k}$, where $\si(i)^{\ell,k}$ denotes the configuration $\si(i)$ after the exchange $(\si_\ell(i),\si_k(i))\mapsto(\si_k(i),\si_\ell(i))$. Here the probability 
 $\hat p_J(\ell,k\tc\si(i),\si(j))$ is defined by 
 \begin{equation}\label{hatp}
\hat p_J(\ell,k\tc\si(i),\si(j))= \begin{cases} p_J(\ell,k\tc\si(i),\si(j)) & i\neq j\\
 \frac{ \mu_J(\si(i)^{\ell k})}{  \mu_J(\si(i))+  \mu_J(\si(i)^{\ell k})} & i= j
\end{cases}
\end{equation}
 where we use the notation \eqref{gxk3}. 
This defines a continuous-time Markov chain with state space $\O^N=\{-1,+1\}^{Nn}$, which is described by the infinitesimal generator 
%
%
%
\begin{equation}\label{particles system generator}
\mathcal{L}_N F(\sigma) = \frac{1}{Nn} \sum_{i,j=1}^N \sum_{\ell,k \in [n]} \cK(\ell,k) \,\hat p_J(\ell,k\tc\si(i),\si(j)) \,\nabla^{i,\ell;j,k} F(\sigma),
\end{equation}
where $\si\in\O^N$,  \(F: \Omega^N \to \mathbb{R}\), and the gradient operator \(\nabla^{i,\ell;j,k}\) is defined by
\begin{equation}\label{eq:unde}
\nabla^{i,\ell;j,k}F(\sigma) := F(\sigma^{i,\ell;j,k}) - F(\sigma), \quad \text{with } \sigma^{i,\ell;j,k}(u) := \begin{cases}
\sigma(u) & u \neq i,j, \\
S_\ell(\sigma(i), \sigma(j)_k) & u = i, \\
S_k(\sigma(j), \sigma(i)_\ell) & u = j,
\end{cases}
\end{equation}
with the convention that, when  $i= j$, then $\sigma^{i,\ell;i,k}(u)=\sigma(u)$ if $ u \neq i$, and $\sigma^{i,\ell;i,k}(i)=\si(i)^{\ell,k}$.

We may picture the system as a spin exchange dynamics in a $N\times n$ grid, with exchanges occurring between sites $(i,\ell)$ and $(j,k)$. By construction, the Markov chain is reversible with respect to the product measure $\mu_J^{\otimes N}$. 
Since only pairs $(\ell,k)$ belonging to sites within the same equivalence class are exchanged,  to turn the Markov chain into an irreducible process we need to impose a constraint of fixed magnetization in each block $B_{N,A}:=\{(i,\ell)\,,\; i\in[N], \ell\in A\}$, $A\in\cA$. To this end, we need the following definitions. 
\begin{definition}\label{def:NAmag}
Given a partition $\cA$ of $[n]$, we write $D_{\cA,N}$ for the set of all vectors $\r=(\r(A),\,A\in\cA)\in[0,1]^\cA$ such that $N|A|\r(A)$ 
is a nonnegative integer. Given $\cA$,  $D_{\cA,N}$ is called the set of $\cA$-\emph{density profiles}. Given $\r\in D_{\cA,N}$, we define
\begin{equation}\label{eq:AmagO}
\O_{N,\r}= \left\{\si\in\O^N: \; \textstyle{\sum_{i\in [N]}\sum_{\ell\in A}}\si_\ell(i)=(2\r(A)-1)N|A|,\; \;\forall A\in\cA\right\}\,.
            \end{equation}
\end{definition}
It follows that the Markov chain with generator \eqref{particles system generator} is irreducible in the state space $\O_{N,\r}$, for every choice of $\r\in D_{\cA,N}$. Moreover, the probability measure 
\begin{equation}\label{eq:pbm}
\mu_{N,J,\r}:=\mu_J^{\otimes N}(\cdot\tc\O_{N,\r})
\end{equation} 
obtained as tensor product of $\mu_J$ restricted to $\O_{N,\r}$ is the associated unique invariant distribution.  We refer to $\mu_{N,J,\r}$ as the multi-canonical Ising measure. 

Our main result concerning the particle system is the following modified log-Sobolev inequality (MLSI), with bounds that are uniform in $N$ and in the density profile $\r\in D_{\cA,N}$. 
To introduce the relevant quantities, observe that the Dirichlet form associated to the reversible pair  $(\cL_N,\mu_{N,J,\r})$ is given by 
\begin{equation}\label{Dirichlet form}
\mathcal{E}_{N, J,\r}(F, G) := \frac{1}{2Nn} \sum_{i,j=1}^N \sum_{\ell,k =1}^n \cK(\ell,k)\mu_{N,J,\r}\left[ \hat p_J(\ell,k\tc \sigma(i), \sigma(j)) \nabla^{i,\ell;j,k}F\,\nabla^{i,\ell;j,k}G \right],
\end{equation}
for functions \(F, G : \O_{N,\r} \to \mathbb{R}\). Recall  that the {\em entropy} of a function $F\ge 0$ with respect to a probability measure $\mu$  is given by 
\(\operatorname{Ent}_{\mu}F := \mu[F \log F] - \mu[F] \log \mu[F]\).

\begin{theorem}[MLSI for the particle system]\label{th:MLSI}
Let \(J\in\bbR^{n\times n}\) be a nonnegative definite matrix with largest eigenvalue \(\lambda(J) < 1/2\), and let  \(\overline{J}=\max_{i\in [n]} \sum_{j\in [n]}|J_{i,j}|\). 
Then, 
for any partition $\cA$ of $[n]$, for any $N\ge 2$, and $\r\in \cD_{\cA,N}$, the Markov chain with Dirichlet form \eqref{Dirichlet form} defined by $J$ and $\cK=\cK_\cA$ given in \eqref{eq:meanf0},  satisfies 
\begin{equation}\label{eq:MLSI1}
\mathcal{E}_{N, J,\r}(F, \log F) \ge \a\operatorname{Ent}_{\mu_{N,J,\r}}F,
\end{equation}
for all $F\ge 0$, with constant  $\a$ such that  
  \begin{equation}\label{eq:MLSI2}
        \alpha \geq \tfrac1{4n}(1 - 2\lambda(J))^2\,e^{-16\overline{J}}\,.
   \end{equation}
\end{theorem}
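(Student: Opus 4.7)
The plan is to adapt the stochastic localization approach to modified log-Sobolev inequalities --- developed by Eldan--Koehler--Zeitouni and Chen--Eldan under the spectral condition $\l(J) < \tfrac12$ --- to the conservative setting of the multi-canonical Ising measure $\mu_{N,J,\r}$. The target constant $\tfrac{1}{4n}(1-2\l(J))^2 e^{-16\overline J}$ naturally splits into three factors, each arising from a distinct step of the argument.

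First, I would handle the Metropolis filter. A single pair exchange modifies only two spins, so $|\log[\mu_J(\s)\mu_J(\s')/\mu_J(\t)\mu_J(\t')]|$ is bounded by a constant multiple of $\overline J$, yielding $\hat p_J \ge \tfrac12 e^{-c_1 \overline J}$ for an explicit $c_1$. Combined with a standard two-point comparison between the entropy production across a pair flip and the symmetric exchange Dirichlet form, this reduces the target MLSI to its analogue for the unfiltered symmetric exchange on $\mu_{N,J,\r}$ with constant $\tfrac{1}{4n}(1-2\l(J))^2$, at the cost of the exponential factor $e^{-16\overline J}$ in the final bound.

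Next, I would build a stochastic localization flow $(\mu^t)_{t\ge 0}$ starting from $\mu^0 = \mu_{N,J,\r}$, defined by
\[
\mu^t(\s) \,\propto\, \mu_{N,J,\r}(\s)\,\exp\bigl(\langle c_t,\s\rangle\bigr), \qquad \dd c_t = \mu^t[\s]\,\dd t + \dd B_t,
\]
with $B_t$ a Brownian motion in $\bbR^{Nn}$. Two simplifications are crucial: since $|\s|^2 \equiv Nn$ on $\O^N$, the Girsanov correction $|c_t|^2/2$ is absorbed into the partition function; and components of $c_t$ that are constant on each block $B_{N,A}$ do not modify $\mu^t$, because they couple only to conserved quantities. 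The standard localization entropy identity then reduces the MLSI to a covariance bound $\Cov(\mu^t) \preceq (I - tJ_N)^{-1}$ that remains uniform on $[0,t^*]$ for some $t^* = O((1-2\l(J))^{-1})$ --- which holds precisely because $\l(J_N) = \l(J) < \tfrac12$ --- combined with a baseline MLSI for the terminal measure $\mu^{t^*}$. At $t = t^*$, the effective interaction has been integrated out, so that $\mu^{t^*}$ is (a mixture of) product Bernoulli measures on $\O_{N,\r}$; on such measures the symmetric exchange coincides with the multi-component mean-field Bernoulli--Laplace chain, which satisfies the classical MLSI of order $1/n$ due to Lee--Yau and Caputo--Posta.

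The hardest part will be carrying out the stochastic localization in the presence of the conservation law. In the unconstrained case the tilt $\exp(\langle c_t, \s\rangle)$ decouples the spin coordinates at the terminal time, and the covariance bound is directly controlled by $(I-tJ)^{-1}$; under the constraint $\O_{N,\r}$, however, the effective tilt lives on the orthogonal complement of the conservation directions, and one must verify that (i) $J_N$ restricted to this subspace still has spectral radius at most $\l(J)$, (ii) the covariance estimates along the flow remain valid uniformly in the density profile $\r$, and (iii) the terminal MLSI descends to the constrained Bernoulli--Laplace setting without degenerating near the boundary of the simplex. Ensuring uniformity in $\r$ and in $N$, so that the final constant depends only on $J$ and $n$, is the genuinely new ingredient that distinguishes the conservative adaptation from the unconstrained versions and is what makes the inequality strong enough to feed into Kac's program.
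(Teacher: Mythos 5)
Your strategy --- split off the Metropolis filter at a cost of $e^{-16\overline J}$, run stochastic localization to a constrained Bernoulli product, pay a covariance bound for the spectral factor, and finish with a baseline MLSI for the terminal measure --- is indeed the paper's strategy, and several of your structural observations (tilts that are constant on each stripe couple only to conserved quantities; the bound must be $\r$- and $N$-uniform) are on target. The load-bearing gap is your choice of baseline. The terminal measure $\mu^{t^*}$ is a Bernoulli product with an \emph{arbitrary} random external field, conditioned on $\O_{N,\r}$; what you need there is an MLSI (or better, an entropy factorization) for the Down-Up/exchange dynamics on such measures, uniform over all external fields and all block densities. Lee--Yau and Caputo--Posta do not supply this: those results are stated for the un-tilted hypergeometric / Bernoulli--Laplace measure, and their martingale or local-limit proofs do not transfer to arbitrary tilts without genuinely new work. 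The ingredient the paper actually uses, and the one you need here, is the Strong Rayleigh property of magnetization-conditioned Bernoulli products \cite{borcea2009negative}, which yields strong log-concavity of the generating polynomial and hence a field-uniform per-particle entropy factorization for the Down-Up walk via \cite{CGM,EntInd22}; see Proposition~\ref{th:tenso} and the proof of Theorem~\ref{MLSI DOWN UP}. Without that input, the terminal estimate in your plan is missing, and items (i)--(iii) that you flag at the end cannot be closed.

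Two secondary remarks. Your covariance bound $\Cov(\mu^t)\preceq (I-tJ_N)^{-1}$ is the unconstrained Ising bound; in the conserved setting the paper argues instead via a mixture-of-Strong-Rayleigh decomposition plus Brascamp--Lieb (Proposition~\ref{prop_cov}), producing $\l(\Cov)\le 2/(1-2\l)$, and one must check that this form is what feeds the entropic-stability estimate of Proposition~\ref{prop:ent_stab}. Also, the paper pays $(1-2\l(J))^2$ because it factorizes in two layers, first over stripes (Proposition~\ref{approximate tensorization of entropy over components}) and then inside each stripe (via Theorem~\ref{th:MLSI-MF}); a one-shot localization, paired with the correct Strong-Rayleigh baseline, should in principle improve this to a single factor $(1-2\l(J))$, so once the baseline gap is fixed your route may actually give a better constant.
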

It is well known that a MLSI of the form \eqref{eq:MLSI2} is equivalent to an exponential decay statement for the relative entropy, with decay rate given by the same constant $\a$. That is, for any initial distribution $\nu$ on $ \O_{N,\r}$, the distribution $\nu_t:=\nu e^{t\cL_N}$ of the particle system at time $t$ satisfies 
    \begin{equation}\label{eq:MLSI3}
      H_N( \nu_t\tc  \mu_{N,J,\r}) \le H_N( \nu \tc  \mu_{N,J,\r})\,e^{- \alpha\,t},\qquad t\ge 0, 
   \end{equation}
   where $ H_N$ denotes the relative entropy for distributions on $\O^N$.  
The key feature of the bound in Theorem \ref{th:MLSI} is the independence on $N$ and $\r\in \cD_{\cA,N}$. This, together with so-called Entropic Chaos and Fisher Chaos results for our system, will allow us to transfer the result \eqref{eq:MLSI3} on the linear particle system to the desired entropy decay estimate \eqref{relative entropy01} for the nonlinear system, as stated in Theorem \ref{Central Theorem}; see Section \ref{sec:kac}.

\subsection{Related work}

Boltzmann-type equations of the form~\eqref{non-linear rqs0} have been extensively studied across a wide range of fields.  
Besides being a classical topic in kinetic theory~\cite{Vill1}, they have been systematically investigated in the framework of \emph{mass-action kinetics}; see~\cite{FeinbergBook} for a recent survey.  
Moreover, such equations capture a variety of phenomena, from models in population genetics and genetic algorithms in combinatorial optimization~\cite{goldberg,Mitchell}, to random sampling~\cite{Sinetal,Sinetal2,SV13}.  
The sampling perspective is particularly relevant in our setting, since, as shown in Theorem~\ref{th:basic}, the nonlinear dynamics can be used to approximate a system with a prescribed interaction structure (determined by the matrix~$J$) and a specified magnetization profile (determined by the initial condition and the partition~$\mathcal{A}$).
We note that these types of dynamical systems can also be formulated within the broader framework of \emph{nonlinear Markov chains}, introduced in~\cite{McKeanNLMC}, which provides a convenient setup for the analysis of mean-field models.  


The specific spin-system framework adopted in this paper was introduced under the name of \emph{Reversible Quadratic Systems} in~\cite{AC2018entropy}, where 
the decay of relative entropy was analyzed for a class of recombination models at infinite temperature.  
Further developments on nonlinear recombination models were obtained in~\cite{CP2024nonlinear,caputo2025cutoff}.  
More recently,~\cite{AC2024nonlinear} established the first quantitative results for an interacting quadratic system by proving exponential decay of the total variation distance at high temperature, for a model in which exchanges occur at coinciding sites in the two spin configurations.  
This yields a process conserving the full single-site magnetization profile.  
In the present work, we considerably strengthen the results of~\cite{AC2024nonlinear} by allowing arbitrary magnetization profiles, and by upgrading the convergence to exponential decay in relative entropy. 


Proving exponential decay of relative entropy for kinetic equations is a notoriously difficult problem; see~\cite{Vill2,entfound,kacsprog,Bonetto2018,Max} and references therein.  
The investigation of this question has driven substantial advances in the field, with the concept of \emph{Propagation of Chaos}, introduced by Kac in his foundational paper~\cite{KAC}, playing a central role.  
We refer to~\cite{kacsprog} for a detailed analysis of Kac's program in the context of the Boltzmann equation, and to~\cite{revchaos} for a recent broad survey.  


As shown in Theorem~\ref{th:MLSI}, our analysis relies on proving that the relative entropy of the corresponding Kac particle system decays exponentially, with rates that are uniform in both the number of particles $N$ and the conserved quantities $\r$.  
Equivalently, this amounts to establishing a uniform \emph{Modified Logarithmic Sobolev Inequality} (MLSI).  
The validity of such a strong functional inequality may come as a surprise, as it stands in sharp contrast to the classical Kac-Boltzmann setting; see~\cite{Vill2,entfound,kacsprog,Bonetto2018} for related discussions.  

Functional inequalities for conservative spin systems have been widely studied; see~\cite{Yau,MCR} for the case of lattice spin systems under a spatial mixing condition.  
Although these results hold uniformly in the conserved parameters $\r$, the techniques developed in those works do not provide the uniformity in the number of particles $N$ required here.  
To prove Theorem~\ref{th:MLSI}, we instead adapt the framework of \emph{stochastic localization} to our setting.  
We refer to~\cite{chen2022localization} for an introduction to stochastic localization, and to~\cite{bauerschmidt2024polchinsky} for an alternative, essentially equivalent formulation based on renormalization group ideas.  
In particular, inspired by the recent developments in Kawasaki dynamics~\cite{bauerschmidt2024kawasaki}, we exploit a reduction to the case of independent spins with fixed magnetization and arbitrary external fields, for which a modified log-Sobolev inequality follows from a bound on the so-called \emph{Down-Up walk} on matroid bases.  
The latter, in turn, rests on important recent advances concerning the entropy of particle systems with log-concave generating polynomials~\cite{CGM,EntInd22}.  
Our approach follows closely the formulation proposed in~\cite{caputo2025factorizations}, which is based on the \emph{entropy factorization} inequality rather than the modified log-Sobolev inequality.  
This perspective is crucial for handling arbitrary magnetization profiles, as it allows us to obtain flexible factorizations of the multi-canonical measure~\eqref{eq:pbm}.

\subsection{Plan of the paper}
General structural properties of the dynamical system are discussed in Section~\ref{sec:struct}, which also contains  the proof of Theorem~\ref{th:basic}.  
The analysis of the associated Kac particle system, together with the proof of Theorem~\ref{th:MLSI}, is presented in Section~\ref{sec:MLSI}.  
Finally, in Section~\ref{sec:kac} we show how to transfer these results from the Kac particle system to the nonlinear equation, thereby completing the proof of Theorem~\ref{Central Theorem}. We conclude with some open problems about establishing exponential decay to equilibrium without the high temperature assumption.

\section{Structural properties of the nonlinear equation}\label{sec:struct}
The statements in this section hold for any integer $n\in\bbN$, for any fixed choice of the symmetric matrix  $J\in\bbR^{n\times  n}$ and the symmetric stochastic $n\times n$ matrix $\cK$.  

\subsection{Preliminary facts}
Recall the notation $\cA$ for the partition of $[n]$ induced by the irreducible components of  $\cK$ and the set $\G_\cA$ of external fields associated to $\cA$. 
\begin{lemma}\label{lem:db}
The kernel $\cQ_J^\cK$ defined by \eqref{gxk2} satisfies 
\begin{equation}\label{db5}
\mu_{J,h}(\sigma) \mu_{J,h}(\sigma')Q_J^{\cK}(\sigma, \sigma'; \tau, \tau') = \mu_{J,h}(\tau) \mu_{J,h}(\tau') Q_J^{\cK}(\tau, \tau'; \sigma, \sigma'),
\end{equation}
for all $h\in \G_\cA$, and $\si,\si',\t,\t'\in\O$.
\end{lemma}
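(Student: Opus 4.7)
The plan is to reduce detailed balance for $\cQ_J^\cK$ to a termwise statement for each Metropolis kernel $Q_J^{\ell,k}$ in the decomposition \eqref{gxk2}. Concretely, I would first establish, for every pair $(\ell,k)$ with $\cK(\ell,k)>0$ and every $h\in\G_\cA$,
\begin{equation}\label{db6plan}
\mu_{J,h}(\sigma)\mu_{J,h}(\sigma')\,Q_J^{\ell,k}(\sigma,\sigma';\tau,\tau') = \mu_{J,h}(\tau)\mu_{J,h}(\tau')\,Q_J^{\ell,k}(\tau,\tau';\sigma,\sigma').
\end{equation}
Multiplying \eqref{db6plan} by $\cK(\ell,k)/n$ and summing over $(\ell,k)$ then yields \eqref{db5} at once; pairs with $\cK(\ell,k)=0$ contribute nothing to $\cQ_J^\cK$ and can be ignored.

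For fixed $(\ell,k)$, I would note that the map $T_{\ell,k}:(\sigma,\sigma')\mapsto\bigl(S_\ell(\sigma,\sigma'_k),\,S_k(\sigma',\sigma_\ell)\bigr)$ is an involution on $\O\times\O$: iterating the exchange returns the original pair, since at site $\ell$ the first coordinate becomes $\sigma'_k$ and at site $k$ the second coordinate becomes $\sigma_\ell$, and applying $T_{\ell,k}$ again swaps them back. According to \eqref{gxk4}, $Q_J^{\ell,k}$ is supported on the identity transition, for which \eqref{db6plan} is automatic, and on the swap $(\sigma,\sigma')\mapsto T_{\ell,k}(\sigma,\sigma')$, for which the Metropolis choice \eqref{gxk3} gives $p_J(\ell,k\tc\sigma,\sigma') = B/(A+B)$ and, using the involution property, $p_J(\ell,k\tc\tau,\tau') = A/(A+B)$, where $A:=\mu_J(\sigma)\mu_J(\sigma')$ and $B:=\mu_J(\tau)\mu_J(\tau')$. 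Both sides of \eqref{db6plan} in the case $h=0$ therefore equal $AB/(A+B)$, establishing reversibility with respect to $\mu_J\otimes\mu_J$.

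The key step, and the only place where the hypothesis $h\in\G_\cA$ really enters, is to transfer this reversibility to $\mu_{J,h}\otimes\mu_{J,h}$. Writing $\mu_{J,h}(\sigma)=\mu_J(\sigma)\,e^{\sum_i h_i\sigma_i}\,Z_J/Z_{J,h}$, the ratio $\mu_{J,h}(\sigma)\mu_{J,h}(\sigma')/[\mu_J(\sigma)\mu_J(\sigma')]$ equals $(Z_J/Z_{J,h})^2\exp\bigl(\sum_i h_i(\sigma_i+\sigma'_i)\bigr)$. Under $T_{\ell,k}$ only the entries at sites $\ell$ and $k$ change, and the difference $\sum_i h_i(\tau_i+\tau'_i)-\sum_i h_i(\sigma_i+\sigma'_i)$ collapses to $(h_\ell-h_k)(\sigma'_k-\sigma_\ell)$. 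Since $\cK(\ell,k)>0$ forces $\ell$ and $k$ to lie in the same block of $\cA$, and $h\in\G_\cA$ forces $h$ to be constant on each such block, this difference vanishes. The ratio is thus invariant under $T_{\ell,k}$, and multiplying the $\mu_J\otimes\mu_J$ identity by this invariant factor produces \eqref{db6plan}. The only mildly delicate point of the whole argument is precisely this invariance of the external-field contribution along proposed swaps, which is where the compatibility between the partition structure of $\cK$ and the class $\G_\cA$ of admissible external fields is used; everything else is routine verification for a Metropolis construction.
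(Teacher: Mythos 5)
Your proof is correct, and the key algebraic fact you use — that $\sum_i h_i(\tau_i+\tau'_i)-\sum_i h_i(\sigma_i+\sigma'_i)=(h_\ell-h_k)(\sigma'_k-\sigma_\ell)$ vanishes when $h$ is constant on the irreducible class containing $\ell$ and $k$ — is exactly the fact the paper relies on. The routes differ only in organization: the paper notes that this cancellation makes the acceptance probability $p_J(\ell,k\tc\cdot,\cdot)$ coincide with the one defined via $\mu_{J,h}$ (i.e.\ $p_J=p_{J,h}$ on pairs $(\ell,k)$ within a block), after which detailed balance for $\mu_{J,h}\otimes\mu_{J,h}$ is the standard Metropolis symmetry in one step; you instead first verify detailed balance for $\mu_J\otimes\mu_J$ directly from the $B/(A+B)$ structure and the involution $T_{\ell,k}$, and then multiply through by the factor $\exp\bigl(\sum_i h_i(\sigma_i+\sigma'_i)\bigr)$, whose invariance under $T_{\ell,k}$ is again the same cancellation. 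Both are clean; yours makes the Metropolis involution argument and the role of $\G_\cA$ more explicit, while the paper's is a bit more compact by collapsing them into the single identity $p_J=p_{J,h}$.
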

\begin{proof}
Let $p_{J,h}(\ell,k|\si,\si')$ denote the quantity defined in \eqref{gxk3} with the difference that $\mu_J$ is replaced by $\mu_{J,h}$, with $h\in\bbR^n$.  By the definition of $\G_\cA$, it follows that, whenever $\ell,k$ belong to the same irreducible class,  
\[
p_{J}(\ell,k|\si,\si') = p_{J,h}(\ell,k|\si,\si')\,, \qquad h\in\G_\cA.
\]
In particular, if $(\t,\t')=\left(S_\ell(\sigma, \sigma'_k),\ S_k(\sigma', \sigma_\ell)\right)$, and $ h\in\G_\cA$, then 
\[
\mu_{J,h}(\si)\mu_{J,h}(\si')p_{J}(\ell,k|\si,\si') = \mu_{J,h}(\t)\mu_{J,h}(\t')p_{J}(\ell,k|\t,\t').
\]
Since only pairs $(\ell,k)$ such that $\cK(\ell,k)>0$ enter the definition \eqref{gxk2} of $\cQ_J^\cK$,  this shows that the kernel $\cQ_J^{\cK}$ satisfies the claimed detailed balance \eqref{db5}.
\end{proof}
Recall the Definition \ref{def:Amag} of the magnetization profile $m_\cA(p)$.
\begin{lemma}\label{lem:uni}
Fix a partition $\cA$ of $[n]$. For any vector $v = (v_A,\,A\in\cA)$, with $v_A\in(-1,1)$, for all $A\in\cA$, there exists a unique choice of $h\in\G_{\cA}$ such that 
$m_\cA(\mu_{J,h})= v$. 
\end{lemma}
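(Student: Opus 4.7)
My plan is to realize $h \mapsto m_{\cA}(\mu_{J,h})$ as the gradient of a strictly convex, coercive function on $\mathbb{R}^{\cA}$, and then invoke standard convex-analytic arguments.

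First I would parameterize $\Gamma_{\cA}$ by $\mathbb{R}^{\cA}$: write $h = (h_A)_{A\in\cA}$ and extend to $[n]$ by $h_i = h_A$ whenever $i \in A$. Set $S_A(\sigma) = \sum_{i \in A}\sigma_i$ and consider the log-partition function
\begin{equation*}
\Psi(h) \;=\; \log Z_{J,h} \;=\; \log\sum_{\sigma \in \Omega} \exp\!\left(\tfrac12\textstyle\sum_{i,j}J_{i,j}\sigma_i\sigma_j + \sum_{A \in \cA} h_A S_A(\sigma)\right).
\end{equation*}
A direct calculation gives $\partial_A \Psi(h) = \mu_{J,h}[S_A] = |A|\,m(\mu_{J,h}, A)$, and the Hessian $(\partial_A\partial_B \Psi)_{A,B}$ is the covariance matrix $\mathrm{Cov}_{\mu_{J,h}}(S_A, S_B)$, which is positive semidefinite.

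Next I would upgrade this to \emph{strict} positive definiteness. If $\sum_A c_A S_A$ were $\mu_{J,h}$-a.s.\ constant, it would be constant on all of $\Omega$ since $\mu_{J,h}$ has full support; but flipping a single spin at a site $i \in A$ changes $\sum_B c_B S_B$ by $\pm 2 c_A$, so all $c_A$ must vanish. Hence $\Psi$ is strictly convex on $\mathbb{R}^{\cA}$, and the map $F: h \mapsto m_{\cA}(\mu_{J,h})$ is injective.

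For existence, define $\Phi(h) = \Psi(h) - \sum_{A\in\cA} |A|\,v_A\, h_A$, so that $\nabla\Phi(h) = 0$ iff $F(h) = v$. The main step, and the technical heart of the argument, is coercivity of $\Phi$. Along a ray $h = tu$ with $\|u\| = 1$, one has
\begin{equation*}
\Psi(tu) \;=\; t\max_{\sigma \in \Omega}\textstyle\sum_{A} u_A S_A(\sigma) \;+\; O(1) \;=\; t\textstyle\sum_A |A|\,|u_A| \;+\; O(1)
\end{equation*}
as $t \to \infty$, since each spin in $A$ can be independently aligned with $\mathrm{sign}(u_A)$. Therefore
\begin{equation*}
\Phi(tu) \;=\; t\textstyle\sum_{A} |A|\,\bigl(|u_A| - u_A v_A\bigr) + O(1) \;\ge\; t\,(1-\max_A|v_A|)\textstyle\sum_A |A|\,|u_A| + O(1),
\end{equation*}
and the hypothesis $|v_A| < 1$ for all $A$ makes the coefficient strictly positive and bounded away from zero uniformly in $\|u\|=1$. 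Hence $\Phi$ is coercive. Combined with strict convexity, it admits a unique minimizer $h^{\star} \in \mathbb{R}^{\cA}$, and this is the unique critical point, giving the unique $h \in \Gamma_{\cA}$ with $m_{\cA}(\mu_{J,h}) = v$.

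The one step requiring care is the linear asymptotics of $\Psi(tu)$: the $O(1)$ remainder must be controlled uniformly in $u$, which follows from the fact that the interaction term $\tfrac12 \sum_{ij}J_{ij}\sigma_i\sigma_j$ contributes a bounded additive term (independent of $h$) and that the sum over $\sigma$ has at most $2^n$ summands. This is where the proof is genuinely using $|v_A|<1$ rather than merely $|v_A|\le 1$; on the boundary the minimizer escapes to infinity, corresponding to Remark~\ref{rem:hinfinity}.
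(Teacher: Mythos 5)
Your proposal is correct, and the injectivity step is essentially identical to the paper's: both compute the Hessian of the log-partition function as a covariance matrix and verify strict positive definiteness via spin flips, then deduce injectivity of the gradient map from strict convexity. Where you diverge from the paper is in the surjectivity argument. The paper proceeds topologically: it shows the image of $f=\nabla\varphi$ is open (inverse function theorem) and that $\partial\,\mathrm{Im}(f)\subset\partial(-1,1)^\cA$ by sending sequences of external fields to infinity, then concludes by a convexity-of-the-target contradiction along a line segment. You instead pass to the Legendre-dual problem: you introduce $\Phi(h)=\Psi(h)-\sum_A|A|v_A h_A$, show it is strictly convex and coercive (using the linear asymptotics of $\Psi$ along rays together with $|v_A|<1$, with the $O(1)$ remainder controlled uniformly because the interaction term and the number of summands are $h$-independent), and extract the unique critical point as the minimizer. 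Both routes are sound. The paper's argument is slightly softer and needs nothing beyond continuity and openness; yours is more quantitative and makes the role of the hypothesis $|v_A|<1$ completely explicit, since it appears as the strictly positive coefficient $1-\max_A|v_A|$ in the coercivity estimate, with the boundary case transparently corresponding to the minimizer escaping to infinity as noted in Remark~\ref{rem:hinfinity}.
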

\begin{proof}
We adapt 
the argument from Lemma~1 in \cite{BubeckEldan}. 
We need to show that the map 
\[
h \mapsto m_\cA(\mu_{J,h})
\]
is a bijection between $\G_\cA$ and $(-1,1)^\cA$. Using the change of variables $\theta_A:=\sum_{i\in A}h_i$, $A\in\cA$, this is equivalent to showing that  the map 
\begin{equation}
\theta 
\in\bbR^{\cA}
\;\;\longmapsto\;\; f(\theta) := \nabla \varphi(\theta) \in (-1,1)^\cA
\end{equation}
is a bijection, where 
\[
\varphi(\theta) = \log \left(\sum_{\si\in\O} e^{\sum_{A \in \cA} \theta_A s_A(\sigma)}
\mu_J(\sigma)\right)\,,\qquad s_A (\si):= \frac{1}{|A|}\sum_{i \in A}\sigma_i\,.
\]
The Jacobian \(\nabla f = \nabla^2 \varphi \) coincides with the covariance matrix of the random vector \((s_A)_{A \in \cA}\) 
when \(\sigma\) is distributed according to \(\mu_{J,h}\), which is easily seen to be strictly positive definite for all $\theta\in\bbR^\cA$.  Therefore, $\varphi$ is strictly convex and $f=\nabla\varphi$ is injective.
To show that it is also surjective,   
it is sufficient to show that the image of $f$,  $\mathrm{Im}(f) $, satisfies 
\[
\partial \mathrm{Im}(f) \subset \partial (-1,1)^\cA.
\]
Indeed, suppose by contradiction that there exists \(y \in (-1,1)^\cA \setminus \mathrm{Im}(f)\), and consider the line segment \([f(0),y]\). Since \(\mathrm{Im}(f)\) is open, this segment must intersect \(\partial \mathrm{Im}(f)\) at some point \(z\). If $\partial \mathrm{Im}(f) \subset \partial (-1,1)^\cA$, then $z \in\partial (-1,1)^\cA$, which contradicts the fact that \((-1,1)^\cA\) is convex.

To prove $\partial \mathrm{Im}(f) \subset \partial (-1,1)^\cA$, take \(w \in \partial \mathrm{Im}(f)\). Then there exists a sequence \((\theta_k) \subset \mathbb{R}^\cA\) such that 
$\lim_{k \to \infty} f(\theta_k) = w$.
By continuity of \(f\), the sequence \((\theta_k)\) must be unbounded, since otherwise one could extract a converging subsequence $\theta'_k\to \theta\in\bbR^\cA$ which would imply that \(w=f(\theta) \in \mathrm{Im}(f)\).  
Therefore, there exists a subsequence \((\theta'_{k})\) and some \(A \in \cA\) such that \((\theta'_{k})_A \to + \infty\) or \((\theta'_{k})_A \to - \infty\).  
It follows that
\[
w_A = \lim_{j \to \infty} f(\theta'_{k})_A \in \{-1,1\}, 
\]
and therefore $w\in\partial (-1,1)^\cA$.
\end{proof}

\subsection{Entropy and stationary states}
A distribution \(p \in \mathcal{P}(\Omega)\) is called {\em stationary} if  $p\,\circ\, p=p$, where the collision product ``$\circ$'' is defined in \eqref{collision operator}. Equivalently, $p$ is stationary iff $p_t=p$, for all $t\ge0$.  It follows from the reversibility in Lemma \ref{lem:db} that $\mu_{J,h}\circ \mu_{J,h}=\mu_{J,h}$, for any $h\in\G_\cA$, and thus all measures $\mu_{J,h}$, $h\in\G_\cA$, are stationary. To characterize all stationary states we use the relative entropy functional.

 The following statement is a version of Boltzmann's H-theorem from kinetic theory. 
\begin{lemma}\label{H-theorem}  
Let $p_t$ denote the solution of (\ref{non-linear rqs}) with initial datum $p\in\cP(\O)$, and let $\mu = \mu_{J,h}$ for some arbitrary, fixed  $h\in\G_\cA$. Then,  
%
\begin{equation} \label{H-theorema}
    \frac{d}{dt} H(p_t\tc\mu) = -\mathcal{D}_{\mu}(f_t), 
    \,\qquad t\ge0, 
\end{equation}
where  $f_t: = p_t/\mu$, 
and the entropy dissipation functional \(\mathcal{D}_{\mu}\) is  defined by 
\begin{equation} \label{dissipation energy nonlin}  
   \mathcal{D}_{\mu}(f)  = \frac{1}{4} \sum_{\sigma, \sigma', \tau, \tau'}\mu(\tau) \mu(\tau') \mathcal{Q}_J^\cK(\tau, \tau'; \sigma, \sigma') \left(f(\tau)f(\tau') - f(\sigma)f(\sigma')\right) \log\left(\frac{f(\tau)f(\tau')}{f(\sigma)f(\sigma')}\right).
\end{equation}
In particular,  $H(p_t\tc \mu)$ is non-increasing in $t\ge 0$. Moreover, $p\in \mathcal{P}(\Omega)$ is stationary iff the density $f=p/\mu$ satisfies $ f(\tau)f(\tau') = f(\sigma)f(\sigma')$ whenever $\mathcal{Q}_J^\cK(\tau, \tau'; \sigma, \sigma')>0$. 
 \end{lemma}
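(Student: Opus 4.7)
The plan is to differentiate $H(p_t\tc\mu)$ along the flow, rewrite both the gain and loss contributions as four-fold sums against $\mu(\sigma)\mu(\sigma')\mathcal{Q}_J^{\mathcal{K}}(\sigma,\sigma';\tau,\tau')$, and then symmetrize using the joint exchange symmetry
\[
\mathcal{Q}_J^{\mathcal{K}}(\sigma,\sigma';\tau,\tau')=\mathcal{Q}_J^{\mathcal{K}}(\sigma',\sigma;\tau',\tau),
\]
which follows from $\mathcal{K}(\ell,k)=\mathcal{K}(k,\ell)$ and the structure of $Q_J^{\ell,k}$ in \eqref{gxk4}, together with the detailed balance identity of Lemma~\ref{lem:db}. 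Since $\sum_\tau\dot p_t(\tau)=0$ and $\mu>0$, a direct computation gives $\frac{d}{dt}H(p_t\tc\mu)=\sum_\tau\bigl(p_t\circ p_t(\tau)-p_t(\tau)\bigr)\log f_t(\tau)$, with $f_t=p_t/\mu$. I would then expand $p_t\circ p_t(\tau)$ via its definition and rewrite the loss term using
\[
p_t(\tau)=\sum_{\sigma,\sigma',\tau'}p_t(\tau)p_t(\tau')\,\mathcal{Q}_J^{\mathcal{K}}(\tau,\tau';\sigma,\sigma')=\sum_{\sigma,\sigma',\tau'}f_t(\tau)f_t(\tau')\,\mu(\sigma)\mu(\sigma')\,\mathcal{Q}_J^{\mathcal{K}}(\sigma,\sigma';\tau,\tau'),
\]
where the first equality uses that $\mathcal{Q}_J^{\mathcal{K}}(\tau,\tau';\cdot,\cdot)$ and $p_t$ are probabilities, and the second uses detailed balance. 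Combining yields
\[
\frac{d}{dt}H(p_t\tc\mu)=\sum_{\sigma,\sigma',\tau,\tau'}\mu(\sigma)\mu(\sigma')\,\mathcal{Q}_J^{\mathcal{K}}(\sigma,\sigma';\tau,\tau')\bigl[f_t(\sigma)f_t(\sigma')-f_t(\tau)f_t(\tau')\bigr]\log f_t(\tau).
\]

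Next, the joint exchange symmetry, combined with the dummy swap $(\sigma,\sigma',\tau,\tau')\mapsto(\sigma',\sigma,\tau',\tau)$, allows the replacement of $\log f_t(\tau)$ by $\tfrac12\log\bigl[f_t(\tau)f_t(\tau')\bigr]$. A second symmetrization, obtained by relabeling $(\sigma,\sigma')\leftrightarrow(\tau,\tau')$ and reapplying detailed balance, produces the same identity with $\log[f_t(\sigma)f_t(\sigma')]$ in place of $\log[f_t(\tau)f_t(\tau')]$ and the opposite sign on the bracket; averaging the two identities gives exactly $-\mathcal{D}_\mu(f_t)$ in the fully symmetric form \eqref{dissipation energy nonlin}, after a last application of detailed balance to convert the $\mu\otimes\mu$ prefactor from $(\sigma,\sigma')$ to $(\tau,\tau')$.

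The remaining statements then follow immediately: each summand of $\mathcal{D}_\mu(f)$ has the form $(a-b)\log(a/b)\ge 0$ with equality iff $a=b$, which yields $\mathcal{D}_\mu(f)\ge 0$ and monotonicity of $H(p_t\tc\mu)$. For the characterization of stationary states, if $p\circ p=p$ then $\mathcal{D}_\mu(f)=0$, forcing $f(\tau)f(\tau')=f(\sigma)f(\sigma')$ on every quadruple with $\mathcal{Q}_J^{\mathcal{K}}(\tau,\tau';\sigma,\sigma')>0$ (equivalently, by detailed balance and $\mu>0$, on every quadruple with $\mathcal{Q}_J^{\mathcal{K}}(\sigma,\sigma';\tau,\tau')>0$); conversely, plugging this pointwise equality into the representation of $p\circ p(\tau)-p(\tau)$ obtained above shows directly that $p\circ p=p$. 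I do not anticipate any serious obstacle: the computation is essentially bookkeeping, and the finiteness of $\Omega$ together with strict positivity of $\mu$ removes any analytic or interchange-of-summation concern. The only point requiring care is tracking which argument pair of $\mathcal{Q}_J^{\mathcal{K}}$ is summed and combining the joint exchange symmetry with detailed balance to produce the factor $1/4$ and the fully symmetric difference/log-ratio in \eqref{dissipation energy nonlin}.
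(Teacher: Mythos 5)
Your proposal is correct and follows essentially the same route as the paper: differentiate $H(p_t\tc\mu)$, rewrite $p_t\circ p_t - p_t$ as a four-fold sum against $\mu\otimes\mu\cdot\mathcal{Q}_J^{\mathcal{K}}$ using stochasticity and detailed balance, and perform the two symmetrizations ($\tau\leftrightarrow\tau'$ with the joint exchange symmetry, then $(\tau,\tau')\leftrightarrow(\sigma,\sigma')$ with detailed balance) to obtain the fully symmetric form with the factor $1/4$. The only difference is cosmetic: you state the joint exchange symmetry $\mathcal{Q}_J^{\mathcal{K}}(\sigma,\sigma';\tau,\tau')=\mathcal{Q}_J^{\mathcal{K}}(\sigma',\sigma;\tau',\tau)$ and its origin explicitly, where the paper appeals to it more tersely under the label of "symmetrizing with respect to $\tau\leftrightarrow\tau'$".
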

\begin{proof}
For any $p\in \mathcal{P}(\Omega)$, setting $f=p/\mu$, and using Lemma \ref{lem:db}, for any $\t\in\O$,
\begin{equation} \label{eq:rqs0}
p\circ p\,(\t) - p(\t)=   \sum_{\sigma, \sigma', \tau'}\mu(\si) \mu(\si') \mathcal{Q}_J^\cK(\sigma, \sigma';\tau, \tau') \left( f(\sigma)f(\sigma')-f(\tau)f(\tau') \right). 
\end{equation}
Therefore, differentiating the relative entropy,  
\begin{align} \label{H-theorema01}
    \frac{d}{dt} H(p_t\tc\mu) &=  \sum_{\t}\left[\frac{d}{dt} \,p_t(\t) \right]\log \frac{p_t(\t) }{\mu(\t) }
    \\ & =- \sum_{\sigma, \sigma', \tau, \tau'}\mu(\tau) \mu(\tau') \mathcal{Q}_J^\cK(\tau, \tau'; \sigma, \sigma') \left(f_t(\tau)f_t(\tau') - f_t(\sigma)f_t(\sigma')\right) \log f_t(\tau).
\end{align}
Symmetrizing first with respect to  $\t \leftrightarrow \t'$ and then with respect to $(\t,\t') \leftrightarrow (\si,\si')$, Lemma \ref{lem:db} shows that the above expression can be rewritten as \eqref{H-theorema}.
Note that \(\mathcal{D}_{\mu}(f)\ge 0\), for any function $f:\O\to \bbR_+$,  since every summand in its definition is nonnegative. Now, if $p$ is stationary, then  the density $f=p/\mu$ satisfies $\cD_\mu(f)=0$, and therefore, by positivity of $\mu$, $ f(\tau)f(\tau') = f(\sigma)f(\sigma')$ whenever $\mathcal{Q}_J^\cK(\tau, \tau'; \sigma, \sigma')>0$. 
Conversely, if $ f(\tau)f(\tau') = f(\sigma)f(\sigma')$ whenever $\mathcal{Q}_J^\cK(\tau, \tau'; \sigma, \sigma')>0$, then \eqref{eq:rqs0} shows  that $p\circ p=p$. 
\end{proof}
%

We turn to a characterization of the stationary states. Let $\mathcal{P}_+(\Omega)$ denote the set of positive distributions, that is the set of \(p \in \mathcal{P}(\Omega)\) such that \(p(\sigma) > 0\) for all \(\sigma \in \Omega\). Clearly, $\mu_{J,h}\in\mathcal{P}_+(\Omega)$ for any $h\in\bbR^n$. 
 
\begin{lemma}[Stationary measures] \label{charact-of-stationarity}
A probability measure \(p \in \mathcal{P}_+(\Omega)\)
is stationary  if and only if \(p =\mu_{J,h}\) for some $h\in\G_\cA$.
\end{lemma}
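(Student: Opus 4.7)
The forward direction is immediate: by \eqref{eq:rqs0} combined with the detailed balance of Lemma \ref{lem:db}, $\mu_{J,h} \circ \mu_{J,h} = \mu_{J,h}$ for every $h \in \G_\cA$. For the nontrivial converse, my plan is to set $f := p/\mu_J > 0$ and $g := \log f$. By Lemma \ref{H-theorem}, stationarity of $p$ forces the multiplicative identity $f(\sigma)f(\sigma') = f(\tau)f(\tau')$ whenever $\cQ_J^\cK(\sigma,\sigma';\tau,\tau') > 0$. Writing $\eta^{(\ell)}$ for the configuration obtained from $\eta \in \Omega$ by flipping its $\ell$-th spin, and unpacking \eqref{gxk2}--\eqref{gxk4}, this translates into
\[
g(\sigma^{(\ell)}) - g(\sigma) \;=\; g(\sigma') - g((\sigma')^{(k)}),
\]
valid for every $(\ell, k)$ with $\cK(\ell, k) > 0$ and every $\sigma, \sigma' \in \Omega$ with $\sigma_\ell = +1$ and $\sigma'_k = -1$.

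The central step is a separation-of-variables observation: in the displayed identity the left-hand side depends only on $\sigma$ (subject to $\sigma_\ell = +1$), the right-hand side only on $\sigma'$ (subject to $\sigma'_k = -1$), and the two restrictions involve independent coordinates. Consequently, both sides equal a common constant. I would verify that the argument applies for every site $\ell \in [n]$ by the following case analysis: if $\{\ell\} \in \cA$ is a singleton component, stochasticity forces $\cK(\ell,\ell) = 1$ and one takes $k = \ell$; otherwise $\ell$ lies in a component $A$ of size at least two, and since $\cK(\ell,\cdot)$ is supported in $A$ and sums to $1$, some $k \in A$ with $\cK(\ell, k) > 0$ exists. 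In both cases $g(\sigma^{(\ell)}) - g(\sigma)$ depends only on $\sigma_\ell$, so one may write $g(\sigma^{(\ell)}) - g(\sigma) = -2 h_\ell \sigma_\ell$ for a constant $h_\ell \in \bbR$.

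Iterating site by site (consistently, since each flip increment is independent of the other coordinates) gives $g(\sigma) = c + \sum_{\ell=1}^n h_\ell \sigma_\ell$ for a constant $c \in \bbR$. Substituting this affine form back into the displayed identity yields $h_\ell = h_k$ for every edge $(\ell, k)$ of $\cK$; propagating this equality along paths within each irreducible component then shows $h \in \G_\cA$. Since $p/\mu_J = e^{c}\,e^{\sum_\ell h_\ell \sigma_\ell}$ and both $p$ and $\mu_{J,h}$ are probability measures, comparison with \eqref{Ising Model def} identifies $p = \mu_{J,h}$. The main conceptual content, and where I would expect to focus the most attention, is the separation-of-variables step that converts a multiplicative functional equation for $f$ into the site-by-site linearity of $g$; the remaining work is routine bookkeeping, with the only subtlety being the case analysis that guarantees an admissible $k$ exists for every site $\ell$.
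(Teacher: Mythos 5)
Your proof is correct and follows essentially the same route as the paper: the paper implements your separation-of-variables observation by specializing to $\sigma' = +$ in the relation \eqref{revmu1}, which makes the right side manifestly independent of the other coordinates of $\sigma$ and leads to the same telescoping over sites and the same propagation of $h_\ell = h_k$ along edges of $\cK$ to conclude $h \in \G_\cA$. One cosmetic point: your case analysis to locate an admissible $k$ for each $\ell$ is unnecessary, since row-stochasticity of $\cK$ already guarantees some $k$ with $\cK(\ell,k) > 0$, and, as you note, $k = \ell$ is allowed and works in the argument, which is exactly the paper's choice of $k(\ell)$.
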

\begin{proof}
We already know that $\mu_{J,h}$ is stationary, for any $h\in\G_\cA$. It remains to show that these are the only positive stationary distributions. 
Fix $\mu=\mu_{J,h}$ for some $h\in\G_\cA$, and let \(p \in \mathcal{P}_+(\Omega)\)
be a stationary measure. From Lemma \ref{H-theorem}, $f=p/\mu$ satisfies
\begin{equation}
\label{revmu1}
f(\sigma)f(\sigma')= f(S_\ell(\sigma, \sigma'_k))f(S_k(\sigma', \sigma_\ell)),
\end{equation}
for all  \(\ell, k \in [n]\) such that \(\mathcal{K}(\ell, k) > 0\), for all \(\sigma, \sigma' \in \Omega\).
For any $\ell$, fix some $k=k(\ell)\in[n]$ such that \(\mathcal{K}(\ell, k) > 0\). 
For any \(\sigma \in \Omega\) and any \(j \in [n]\), let \(\sigma[j] \in \Omega\) denote the configuration equal to \(+1\) on all sites \(\{1, \dots, j\}\) and equal to \(\sigma\) on \(\{j+1, \dots, n\}\). Setting \(\sigma[0] = \sigma\), we have:
\[
  f(\sigma) = f(+) \prod_{j=1}^n \frac{f(\sigma[j-1])}{f(\sigma[j])},
\]
where $\si[n]=+$ is the all $+$ configuration.
Define \(\varphi_j(a) := \frac{f(+^{k(j), a})}{f(+)}\), where \(+^{k,a}\) denotes the configuration equal to \(a\) at site \(k\) and \(+1\) elsewhere. 
Taking $\si=\si[j-1]$, $\si'=+$, from \eqref{revmu1} with $j=\ell$ and $k=k(j)$, 
\[
  \frac{f(\sigma[j-1])}{f(\sigma[j])} = \frac{f(+^{k(j), \sigma_j})}{f(+)} = \varphi_j(\sigma_j).
\]
Since \(\varphi_j\) is a positive function on \(\{-1, +1\}\), we can write \(\varphi_j(a) = c_j e^{h_j a}\), \(a = \pm 1\), for some constants \(c_j>0,h_j \in \mathbb{R}\). It follows that
\[
f(\sigma) =C\exp\left(\textstyle{\sum_{j \in [n]}} h_j\sigma_j \right),
\]
for some $h\in\bbR^n$, where $C>0$ is a constant.
It remains to prove that $h\in\G_\cA$. To see this, note that by \eqref{revmu1}, whenever $\cK(\ell,k)>0$ one has
\begin{equation}
\label{revmu2}
(h_\ell-h_k)(\si'_k-\si_\ell) = 0\,,
\end{equation}
for all $\si,\si'$. It follows that $h_\ell=h_k$ if $\cK(\ell,k)>0$. Therefore $h\in\G_\cA$.
\end{proof}

\subsection{Conservation laws}
Recall the Definition \ref{def:Amag} of the magnetization profile $m_\cA(p) = (m(p,A), A\in\cA)$. 

\begin{lemma}\label{conserved magn}
For all \(p, q \in \mathcal{P}(\Omega)\), 
\begin{equation}
\label{magn1}
  m_{\mathcal{A}}(p \circ q) = \tfrac{1}{2}\,m_{\mathcal{A}}(p )+ \tfrac{1}{2}\,m_{\mathcal{A}}(q).
\end{equation}
In particular, if \(p_t\) is the solution of \eqref{non-linear rqs} with initial datum \(p\), then
$ \frac{d}{dt} m_\cA(p_t)= 0$ 
%
\end{lemma}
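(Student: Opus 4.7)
The plan is to exploit the fact that every elementary collision only exchanges spins between sites lying in the same block of $\cA$. The first step is to verify a pointwise conservation law: for every $A\in\cA$ and every $(\sigma,\sigma')\in\Omega^2$,
\[
\sum_{i\in A}(\tau_i+\tau'_i) = \sum_{i\in A}(\sigma_i+\sigma'_i)\qquad \text{whenever } \cQ_J^\cK(\sigma,\sigma';\tau,\tau')>0.
\]
This is immediate from \eqref{gxk2}--\eqref{gxk4}: if $\cK(\ell,k)>0$ then $\ell$ and $k$ belong to the same class $A'\in\cA$, and the elementary exchange $(\sigma_\ell,\sigma'_k)\mapsto(\sigma'_k,\sigma_\ell)$ either leaves the block $A\neq A'$ untouched or, when $A=A'$, just permutes two spins inside the combined block $A\cup A$, preserving the total sum.

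Next I would introduce the joint distribution on $\Omega\times\Omega$
\[
\Pi(\tau,\tau'):=\tfrac12\sum_{\sigma,\sigma'}\bigl(p(\sigma)q(\sigma')+p(\sigma')q(\sigma)\bigr)\,\cQ_J^\cK(\sigma,\sigma';\tau,\tau'),
\]
and check that both of its marginals coincide with $p\circ q$. The first marginal equals $p\circ q$ by the very definition \eqref{collision operator}. For the second marginal, I would use the swap symmetry
\[
\cQ_J^\cK(\sigma,\sigma';\tau,\tau')=\cQ_J^\cK(\sigma',\sigma;\tau',\tau),
\]
which follows from the symmetry $\cK(\ell,k)=\cK(k,\ell)$ together with the symmetry of the exchange rule \eqref{gxk1} and of the acceptance probability \eqref{gxk3}, combined with the explicit symmetrization in $p,q$ inside $\Pi$.

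The final step is then to compute $\sum_{\tau,\tau'}\Pi(\tau,\tau')\sum_{i\in A}(\tau_i+\tau'_i)$ in two different ways. On one side, the marginal property just established yields $2|A|\,m(p\circ q,A)$. On the other side, the pointwise conservation law reduces this sum to
\[
\tfrac12\sum_{\sigma,\sigma'}\bigl(p(\sigma)q(\sigma')+p(\sigma')q(\sigma)\bigr)\sum_{i\in A}(\sigma_i+\sigma'_i)=|A|\bigl(m(p,A)+m(q,A)\bigr).
\]
Equating the two expressions and dividing by $2|A|$ gives \eqref{magn1} componentwise. Specializing to $p=q=p_t$ and using the linearity of $m_\cA$ in its argument then produces $\frac{d}{dt}m_\cA(p_t)=m_\cA(p_t\circ p_t)-m_\cA(p_t)=0$.

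I do not expect a serious obstacle in this lemma; the only mildly delicate point is the symmetrization showing that both marginals of $\Pi$ equal $p\circ q$, which is however a direct consequence of the symmetry of the transport kernel $\cK$ that is already built into the construction of $\cQ_J^\cK$.
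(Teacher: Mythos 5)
Your proof is correct. You take a genuinely different route from the paper: you first observe the \emph{pathwise} conservation law that $\sum_{i\in A}(\tau_i+\tau'_i)=\sum_{i\in A}(\sigma_i+\sigma'_i)$ whenever $\cQ_J^\cK(\sigma,\sigma';\tau,\tau')>0$ (because $\cK(\ell,k)>0$ forces $\ell,k$ into the same class, so the exchange permutes spins within a single block), and then combine this with the fact that both marginals of the joint collision distribution $\Pi$ coincide with $p\circ q$ — a consequence of the swap symmetry $\cQ_J^\cK(\sigma,\sigma';\tau,\tau')=\cQ_J^\cK(\sigma',\sigma;\tau',\tau)$, which in turn follows from $\cK(\ell,k)=\cK(k,\ell)$ and $p_J(\ell,k\tc\sigma,\sigma')=p_J(k,\ell\tc\sigma',\sigma)$. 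The paper instead computes $p\circ q[\tau_i]$ directly by integrating out the collision kernel, collects the contributions into a term $R_{i,k}(\sigma,\sigma')$, and observes that $R_{i,k}$ is \emph{antisymmetric} under $i\leftrightarrow k$ so that summing over $i,k\in A$ makes it vanish. The two arguments rest on the same underlying symmetries, but yours is more structural: the pathwise conservation law together with the marginal identity makes the block-magnetization conservation essentially immediate, whereas the paper's argument is a more explicit cancellation computation. Either way the final step, $\frac{d}{dt}m_\cA(p_t)=m_\cA(p_t\circ p_t)-m_\cA(p_t)=0$ by linearity of $m_\cA$, is as you describe.
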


\begin{proof}
By  definition, 
\[
 m(p \circ q,A) 
 = \frac{1}{|A|} \sum_{i \in A} p \circ q \,[\t_i],
\]
where 
\[
p \circ q \,[\t_i]=
 \frac{1}{2} \sum_{\sigma, \sigma'}  p(\sigma) q(\sigma') \sum_{\tau, \tau'} \left(\mathcal{Q}^{\mathcal{K}}_J(\sigma, \sigma'; \tau, \tau') + \mathcal{Q}^{\mathcal{K}}_J(\sigma', \sigma; \tau, \tau')\right) \tau_i.
\]
Moreover,
\[
\sum_{\tau, \tau'} \mathcal{Q}^{\mathcal{K}}_J(\sigma, \sigma'; \tau, \tau') \,\tau_i
= \si_i  + \frac1n \sum_{ k=1}^n \cK(i,k) p_J(i, k \tc \sigma, \sigma') (\sigma'_k-\sigma_i).
\]
Therefore, 
\[
\frac12\sum_{\tau, \tau'} \left(\mathcal{Q}^{\mathcal{K}}_J(\sigma, \sigma'; \tau, \tau') + \mathcal{Q}^{\mathcal{K}}_J(\sigma', \sigma; \tau, \tau')\right)\tau_i
= \frac12(\si_i+\si'_i)  + \frac1{2n} \sum_{ k=1}^nR_{i,k}(\si,\si')
\]
where
\[
R_{i,k}(\si,\si')=\cK(i,k) \left[p_J(i, k \tc \sigma, \sigma') (\sigma'_k-\sigma_i) - p_J(k,i \tc \sigma, \sigma') (\sigma'_i-\sigma_k) \right],
\]
and we have used the symmetries 
$p_J(i, k \tc \sigma, \sigma')=p_J(k,i \tc \sigma', \sigma)$.
Since $\cK(i,k)=\cK(k,i)$,  $R_{i,k}(\si,\si')$ is antisymmetric in the exchange $i\leftrightarrow k$. Therefore, for any fixed $A\subset [n]$, 
\[
\sum_{i,k\in A}R_{i,k}(\si,\si')=0.
\]
In particular, using that $\cK(i,k)=0$ when $i,k$ do not belong to the same irreducible component, this proves that for any fixed $A\in\cA$,
 \[
\frac1{|A|}\sum_{i\in A} p \circ q \,[\t_i]=\frac12 m(p ,A) + \frac12 m(q ,A)\,. 
\]
\end{proof}

\subsection{Proof of Theorem \ref{th:basic}}
The key to the proof of Theorem \ref{th:basic}
is the following statement about positivity of solutions, that we refer to as the irreducibility lemma. Recall the notation $\cP_\cA(\O)$ for the set of $\cA$-regular distributions. 
\begin{lemma}[Irreducibility]\label{lem:irre}
For any $p\in\cP_\cA(\O)$, there exists \(t_0>0\) and \(\varepsilon_0>0\)  such that for all \(t\geq t_0\) the solution \(p_t\) of \eqref{non-linear rqs} satisfies 
\begin{equation} \label{eq:irreb}
p_t( \tau)\geq \varepsilon_0, \qquad \forall\tau\in\Omega.
\end{equation}
\end{lemma}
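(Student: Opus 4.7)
The plan is to prove the lemma in two stages. Stage 1 is qualitative: $p_t(\tau)>0$ for every $\tau\in\Omega$ and every $t>0$, i.e., $\supp(p_t)=\Omega$ as soon as the dynamics has run for any positive amount of time. Stage 2 upgrades the pointwise positivity to a uniform lower bound for $t$ large via an $\omega$-limit/LaSalle argument based on the H-theorem.

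For Stage 1, I would start from the variation-of-constants representation
\[
p_t = e^{-(t-s)}p_s + \int_s^t e^{-(t-r)}(p_r\circ p_r)\,dr,
\]
valid for all $t\geq s\geq 0$. This immediately gives that $\cS_t:=\supp(p_t)$ is non-decreasing in $t$. Moreover, for any $\sigma,\sigma'\in\cS_s$ and $(\ell,k)$ with $\cK(\ell,k)>0$, the configuration $\tau=S_\ell(\sigma,\sigma'_k)$ lies in $\cS_t$ for every $t>s$, because $\cQ_J^\cK(\sigma,\sigma';\tau,\tau')$ carries strictly positive weight $\tfrac{1}{n}\cK(\ell,k)\,p_J(\ell,k\,|\,\sigma,\sigma')$ for a suitable $\tau'$, and $p_J>0$ by~\eqref{gxk3}. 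Iterating, $\cS_t$ contains the closure of $\cS_0$ under one-step exchanges. The key combinatorial input is that $\cA$-regularity of $p_0$ forces this closure to be all of $\Omega$: for each $A\in\cA$ the bound $|m(p_0,A)|<1$ produces within $\cS_0$ witnesses attaining both spin values at some site of $A$, and using such a witness as the second argument of an exchange lets one flip a single prescribed spin in $A$ of any configuration to any desired value; chaining at most $n$ such flips reaches any $\tau\in\Omega$.

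For Stage 2, I would use the Lyapunov functional $V(p):=H(p\,|\,\mu)$ with $\mu=\mu_{J,h^*}$, where $h^*\in\G_\cA$ is the unique field for which $m_\cA(\mu)=m_\cA(p_0)$ (Lemma~\ref{lem:uni}). Since $\cP(\Omega)$ is compact, $V$ is continuous, and by the H-theorem $V(p_t)$ is non-increasing, LaSalle's invariance principle shows that every $q\in\omega(p_0)$ satisfies $V(q_t)\equiv\mathrm{const}$, hence $\cD_\mu(q_t/\mu)\equiv 0$. The characterization in Lemma~\ref{H-theorem} then forces each $q_t$ to be stationary, so in fact $q_t\equiv q$. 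Conservation of magnetization (Lemma~\ref{conserved magn}) gives $m_\cA(q)=m_\cA(p_0)\in(-1,1)^\cA$, so $q\in\cP_\cA(\Omega)$; applying Stage 1 to $q$ then yields $q=q_1>0$ pointwise. Therefore $\omega(p_0)$ is a compact subset of $\cP_+(\Omega)$, so $\varepsilon_1:=\min_{q\in\omega(p_0)}\min_\tau q(\tau)>0$. Since $\mathrm{dist}_{\mathrm{TV}}(p_t,\omega(p_0))\to 0$ as $t\to\infty$, for all $t\geq t_0$ large enough we have $\min_\tau p_t(\tau)\geq \varepsilon_1/2=:\varepsilon_0$, proving the lemma.

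The main obstacle I anticipate is Stage 1, specifically the combinatorial step showing that the iterated exchange-closure of $\cS_0$ fills $\Omega$. The subtlety is to carry out the single-site flips sequentially while reusing the same $\cS_0$-witnesses at each stage, and to check that each flip respects the partition $\cA$ encoded in $\cK$. Once this is in hand, Stage 2 is a fairly standard compactness-plus-LaSalle argument; what makes it clean in our setting is precisely that Stage 1 lets us rule out degenerate stationary limits supported on proper subsets of $\Omega$.
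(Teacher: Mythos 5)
Your proposal is correct and represents a genuinely different strategy from the paper's, so let me compare the two.

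The paper proves the lemma quantitatively: using the Wild-sum / binary-tree representation of $p_t$ (Lemma~\ref{lem:rep_ct}), it first reduces the claim to a uniform lower bound for the \emph{discrete-time} iterate $\Phi_u^0$ at $J=0$ (Lemma~\ref{Lemma reduction conv}, exploiting the pointwise domination $\cQ_J^\cK\ge \delta_J\cQ_0^\cK$), and then establishes that bound via a dedicated ``Marked Partition Process'' coupling fragmentation with a random walk on the irreducible components of~$\cK$ (Lemmas~\ref{J=0 irredu}--\ref{lem:repres}). This yields a bound $\varepsilon_0$ depending only on $m_\cA(p)$, $J$, $\cK$, uniformly over all $p$ with that profile. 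Your Stage~1 is a softer route to the weaker qualitative statement that $\supp(p_t)=\Omega$ for every $t>0$: the Duhamel identity $p_t=e^{-(t-s)}p_s+\int_s^t e^{-(t-r)}p_r\circ p_r\,dr$ gives that the support is closed under iterated exchanges, and $\cA$-regularity of $p$ supplies witnesses reaching both spin values inside each block. You then obtain the uniform lower bound by a LaSalle argument on the Lyapunov functional $H(\cdot\tc\mu_{J,h^*})$, which forces each $\omega$-limit point to be a stationary measure in $\cP_\cA(\O)$ and hence (by Stage~1 applied to it) strictly positive; compactness of $\omega(p_0)$ then gives $\varepsilon_0>0$. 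This is a clean and valid argument, but it trades away the quantitative and uniform-in-$p$ information that the paper's combinatorial construction delivers, and that information is exactly what the paper exploits later (the MPP is built so that the bound can be made uniform over the whole magnetization class -- a feature invisible in your approach).

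Two details worth making explicit if you wanted to complete the proof. In Stage~1, ``chaining at most $n$ flips'' glosses over the fact that $\cK$ need not connect a target site $\ell$ directly to the witness site $k^+$: you should note that because each $A\in\cA$ is an irreducible class of $\cK$, you can propagate the desired spin value along a $\cK$-path $\ell_0=k^+,\ell_1,\dots,\ell_m=\ell$ by applying the witness to itself ($\sigma^{(i+1)}=S_{\ell_{i+1}}(\sigma^{(i)},\sigma^{(i)}_{\ell_i})$), and then use $\sigma^{(m-1)}$ (which carries the value at $\ell_{m-1}$, a $\cK$-neighbour of $\ell$) as the second argument of the final exchange. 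In Stage~2, the fact that $\omega(p_0)$ is nonempty, compact, and invariant under $S_t$, and that $V(q)=\lim V(p_t)$ for every $q\in\omega(p_0)$, should be recorded as the standing hypotheses of LaSalle; since $\mu_{J,h^*}$ has full support, $V$ is finite and continuous on all of $\cP(\O)$, so these are indeed satisfied.
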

The proof requires several steps and is given  in the next subsections. Here we show how to prove Theorem \ref{th:basic}
 once we have Lemma \ref{lem:irre}.
If $p\in\cP_\cA(\O)$, then Lemma \ref{lem:irre} ensures that $p_t\in\cP_+(\O)$ eventually. Therefore, if $p$ is $\cA$-regular and stationary, then it must be in $\cP_+(\O)$, and therefore by Lemma \ref{charact-of-stationarity}, $p=\mu_{J,h}$ for some $h\in\G_\cA$. Next, we show that if $p$ is $\cA$-regular, then $p_t$ converges as $t\to\infty$ to a stationary state $\nu$. If this is the case, then by the above discussion $\nu=\mu_{J,h}$ for some $h\in\G_\cA$, and, using Lemma \ref{lem:uni} and Lemma \ref{conserved magn}, $h$ is the unique element of $\G_\cA$ such that $m_\cA(p) = m_{\cA}(\mu_{J,h})$. Thus, the proof of Theorem \ref{th:basic} is complete once we establish the following statement. 

\begin{lemma}
If $p\in\cP_\cA(\O)$, then there exists a stationary state $\nu$ such that $p_t \to \nu$, as $t\to\infty$.
\end{lemma}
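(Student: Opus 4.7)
The plan is to combine the $H$-theorem (Lemma \ref{H-theorem}) with the irreducibility estimate (Lemma \ref{lem:irre}) and the characterization of positive stationary states (Lemma \ref{charact-of-stationarity}), together with compactness of $\cP(\O)$ and continuous dependence on the initial datum for the finite-dimensional Cauchy problem \eqref{non-linear rqs}. The main subtle point will be verifying that any subsequential limit lies in $\cP_+(\O)$, so that the relative entropy is continuous at it and Lemma \ref{charact-of-stationarity} applies; this is precisely what Lemma \ref{lem:irre} provides.

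Concretely, by Lemma \ref{lem:uni} one first identifies the unique $h\in\G_\cA$ with $m_\cA(\mu_{J,h})=m_\cA(p)$, and sets $\mu:=\mu_{J,h}$. Lemma \ref{H-theorem} guarantees that $t\mapsto H(p_t\tc\mu)$ is non-increasing and nonnegative, hence converges to some limit $\ell\ge 0$ as $t\to\infty$. Compactness of $\cP(\O)$ implies that any sequence $t_n\to\infty$ admits a subsequence, still denoted $(t_n)$, along which $p_{t_n}\to\nu$ for some $\nu\in\cP(\O)$. Applying Lemma \ref{lem:irre} to the initial datum $p$, we have $p_t(\tau)\ge\varepsilon_0$ for every $\tau\in\O$ and every $t\ge t_0$; hence $\nu(\tau)\ge\varepsilon_0>0$, so $\nu\in\cP_+(\O)$, the functional $H(\cdot\tc\mu)$ is continuous at $\nu$, and $H(\nu\tc\mu)=\ell$.

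The final step uses continuous dependence on initial conditions for \eqref{non-linear rqs}: the shifted trajectories $s\mapsto p_{t_n+s}$ converge uniformly on compact intervals to the solution $s\mapsto\nu_s$ with $\nu_0=\nu$. The elementary Gronwall-type bound $\frac{d}{dt}p_t(\tau)=p_t\circ p_t(\tau)-p_t(\tau)\ge -p_t(\tau)$ shows that pointwise positivity is preserved along the flow, so $\nu_s(\tau)\ge\varepsilon_0 e^{-s}>0$ for every $s\ge 0$, and by continuity of the relative entropy at positive measures, $H(\nu_s\tc\mu)=\lim_n H(p_{t_n+s}\tc\mu)=\ell$ for every $s\ge 0$. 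Differentiating at $s=0$ and using \eqref{H-theorema} yields $\cD_\mu(\nu/\mu)=0$, and the last assertion of Lemma \ref{H-theorem} then forces $\nu$ to be stationary. By Lemma \ref{charact-of-stationarity}, $\nu=\mu_{J,h'}$ for some $h'\in\G_\cA$; Lemma \ref{conserved magn} gives $m_\cA(\nu)=m_\cA(p)=m_\cA(\mu)$, and the uniqueness in Lemma \ref{lem:uni} forces $h'=h$, so $\nu=\mu$. Since every subsequential limit of $(p_t)$ equals $\mu$, compactness of $\cP(\O)$ gives $p_t\to\mu$, as required.
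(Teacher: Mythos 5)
Your proposal is correct and follows the same LaSalle-type strategy as the paper: combine the monotonicity of $t\mapsto H(p_t\tc\mu)$ from the $H$-theorem with compactness to extract a subsequential limit $\nu$, use Lemma~\ref{lem:irre} to place $\nu$ in $\cP_+(\O)$, and then show $\nu$ must be a stationary state, after which Lemma~\ref{charact-of-stationarity} and Lemma~\ref{lem:uni} force $\nu=\mu_{J,h}$. The one place you diverge from the paper is in the final step that proves $\nu$ is stationary. The paper argues by contradiction: if $\nu$ were not stationary, then $\cD_\mu(\nu/\mu)>0$, and continuity of the flow and of $H(\cdot\tc\mu)$ would produce a time $u+t_k$ at which $H(S_{u+t_k}(p)\tc\mu)$ drops strictly below its infimum $H(\nu\tc\mu)$. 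You instead argue directly: continuous dependence and the Gronwall lower bound $p_t(\tau)\ge p_0(\tau)e^{-t}$ show the limit trajectory $s\mapsto\nu_s$ stays in $\cP_+(\O)$, so $s\mapsto H(\nu_s\tc\mu)$ is differentiable and identically equal to the limit value $\ell$, forcing $\cD_\mu(\nu/\mu)=0$ and hence stationarity by the last assertion of Lemma~\ref{H-theorem}. Both routes are sound and use the same ingredients; the direct computation of the derivative is arguably a touch cleaner than the contradiction, while the paper's version avoids explicitly introducing the limiting solution $\nu_s$. One small remark: since $\mu\in\cP_+(\O)$, the map $H(\cdot\tc\mu)$ is in fact continuous on all of $\cP(\O)$ (with the convention $0\log 0=0$), so you do not actually need $\nu\in\cP_+(\O)$ for the continuity step, only for differentiability of $s\mapsto H(\nu_s\tc\mu)$ and for invoking Lemma~\ref{charact-of-stationarity}.
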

\begin{proof}
Fix $\mu=\mu_{J,h}$, for some $h\in\G_\cA$, and $p\in\cP_\cA(\O)$, and let $S_t(p)=p_t$ denote the solution of \eqref{non-linear rqs}.
By compactness, there exists a diverging sequence of times \((t_k)_{k \in \mathbb{N}}\) and \(\nu \in \mathcal{P}(\Omega)\) such that
\[
S_{t_k}(p) \to \nu \quad \text{as} \quad k \to \infty.
\]
By Lemma \ref{H-theorem}, it follows that
\begin{equation} \label{eq:abo}
\lim_{t \to \infty} H(S_t(p)\tc \mu) = \inf_{t \ge 0} \,H(S_t(p) \tc \mu) = H(\nu \tc \mu).
\end{equation}
Note that this holds for any $\mu=\mu_{J,h}$, $h\in\G_\cA$. If $\nu$ is stationary, then by Lemma \ref{lem:irre} and Lemma \ref{charact-of-stationarity} we know that $\nu=\mu_{J,h}$, for some $h\in\G_\cA$. Therefore, in \eqref{eq:abo} we can take $\mu=\nu$, which then implies that $H(S_t(p)\tc \nu) \to 0$, $t\to\infty$. In turn, e.g.\ by Pinsker's inequality, this implies the desired convergence $ S_t(p)\to \nu$. Thus, it suffices to prove that $\nu$ is stationary.

Assume, for contradiction, that \(\nu\) is not stationary. Then, by Lemma \ref{H-theorem} one has that $f=\nu/\mu$ satisfies $\cD_\mu(f)>0$.
Therefore, 
there exist \(u > 0\) and \(\delta > 0\) such that
\[
H(S_u(\nu) \tc \mu) \le H(\nu \tc \mu) - \d u .
\]
On the other hand, by the convergence $S_{t_k}(p) \to \nu$, and the continuity of $S_t(\cdot)$ and $H(\cdot\tc\mu)$, there exists $k\in\bbN$ such that $H(S_u(\nu) \tc \mu)\ge H(S_u(S_{t_k}(p)) \tc \mu) - \d u /2$.
Since $S_u(S_{t_k}(p))=S_{u+t_k}(p)$, this would then imply
\[
H(S_{u+t_k}(p) \tc \mu) \le H(\nu \tc \mu) - \d u/2 ,
\]
which contradicts \eqref{eq:abo}. This ends the proof. 
\end{proof}

Thus, we have proved Theorem \ref{th:basic}
assuming the validity of Lemma \ref{lem:irre}.
We now turn to the proof of Lemma \ref{lem:irre}.   
The strategy is based on ideas introduced in \cite{AC2024nonlinear} to prove convergence  for the discrete time version of the single site model in Example \ref{ex:ss}, where the irreducibility was deduced from the analysis of a fragmentation process and a coupon collector argument. 
However, the dynamics considered here differs from that in \cite{AC2024nonlinear} in two respects: first, it evolves in continuous time, and second, exchanges are allowed between any pair of sites. The passage to continuous time entails a natural modification of the original argument, which is based on a representation of the solution in terms of random binary trees. More substantially, allowing exchanges across arbitrary pairs of sites requires a new analysis, since fragmentation and coupon-collecting arguments alone are no longer sufficient; one must also account for the “mixing” within each class $A\in\cA$ in order to conclude.
%
The proof is broken down into several steps. We start with a graphical representation needed to take care of the continuous time nature of the process.

\subsection{Graphical representation}
%
We introduce some notation to handle binary rooted trees. 
The infinite rooted binary tree is represented as the set $\mathbb T:=\{\emptyset\} \cup_{k\ge 1} \{0,1\}^k$, where $\emptyset$ denotes the root, and an element $x\in\{0,1\}^k$ represents a node in the $k$-th generation. 
 For $x\in \bbT$ we let $|x|$ denote the length of the sequence $x$, interpreted as the depth of the node $x$.
 We equip $\mathbb T$ with the partial order $\prec$ defined by saying that $x \prec y$, if $|x| < |y|$ and $y=(x,z)$ for some $z\in \{0,1\}^{|y|-|x|}$, where  $(x,z)$ denotes the concatenation of the two sequences. A finite rooted binary tree is a finite subset $\gamma$ of $\mathbb T$ which satisfies the following:
\begin{enumerate}
		\item For any $x\in \gamma$ and any $y\in \mathbb{T}$, if $y \prec x$ then $y\in \gamma$.
		\item For any $x\in \gamma$ either $\{(x,0),(x,1)\}\subset \gamma$ or $\{(x,0),(x,1)\}\cap \gamma=\emptyset$.
	\end{enumerate}
	The set of maximal elements in $\gamma$ (for the order $\prec$) that is, its leaves, is denoted by
	\begin{equation}\label{leaf}
	L(\gamma):= \{ x\in \gamma \ : \ \nexists y\in \gamma, \  x \prec y\}.
	\end{equation}

Given a  finite rooted binary tree  $\g$, and $p\in \cP(\O)$, we define the distribution \( T_\g(p) \) recursively as follows. Assign to each node \( u \) in \( \g \) a probability distribution \( \nu_u \in \cP(\O) \), in such a way that each leaf node $u\in L(\g)$ is assigned the distribution \( p \), and each internal node is assigned the distribution resulting from the binary collision $\mu_1\circ\mu_2$, where the measures $\mu_1,\mu_2$ are assigned to its two children. The distribution \( T_\g(p) \) is defined as the distribution at the root of the tree obtained in this way. See Figure~\ref{fig:fig2} for an illustration.

More generally, given a collection of measures \( (p_1, p_2, \dots, p_L) \in \cP(\O)^L \) and a binary tree \( \g \) with \( L \) leaves \( L(\g) = \{x_1, \dots, x_L\} \), we define \( T_\g(p_1, \dots, p_L) \) by assigning \( p_i \) to the leaf \( x_i \), and then recursively applying the binary collision rule at internal nodes up to the root.

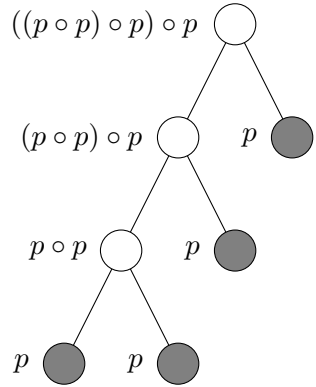
\begin{figure}[h]
\centering
\begin{tikzpicture}[scale=0.75]
    \draw  (1,-3) -- (4,3);
    \draw  (3,-3) -- (2,-1);
    \draw  (4,-1) -- (3,1);
    \draw  (5,1) -- (4,3);

    \node[shape=circle, draw=black, fill = white, scale = 1.5]  at (3,1) {};
    \node[shape=circle, draw=black, fill = white, scale = 1.5]  at (2,-1) {};
    \node[shape=circle, draw=black, fill = gray, scale = 1.5]  at (1,-3) {};
    \node[shape=circle, draw=black, fill = gray, scale = 1.5]  at (3,-3) {};
    \node[shape=circle, draw=black, fill = gray, scale = 1.5]  at (4,-1) {};
    \node[shape=circle, draw=black, fill = white, scale = 1.5]  at (4,3) {};
    \node[shape=circle, draw=black, fill = gray, scale = 1.5]  at (5,1) {};

    \node at (1/4,-3) {$p$};
    \node at (9/4,-3) {$p$};
    \node at (1-.1,-1) {$p \circ p$};
    \node at (13/4,-1) {$p$};
    \node at (17/4,1) {$ p$};
    \node at (6/4-.2,1) {$(p\circ p) \circ p$};
    \node at (8/4-.3,3) {$((p\circ p) \circ p)\circ p$};
\end{tikzpicture}
\caption{An example of a binary tree \( \g \) with four leaves, and the corresponding measure at the root: \( T_\g(p) = (((p \circ p) \circ p) \circ p) \).}
\label{fig:fig2}
\end{figure}

We now define the binary branching process, namely a stochastic process \( (\g_t)_{t \geq 0} \) taking values in the space of finite rooted binary trees. The process evolves as follows: start with \( \g_0 = \{\emptyset\} \), the single-node tree (the root). Each leaf of the tree is equipped with an exponential clock of mean one. When a clock associated with a leaf \( x \) rings at time \( t \), the leaf splits, and two children \( (x,0) \) and \( (x,1) \) are added to the tree, making \( x \) an internal node.
Equivalently, \( \g_t \) can be interpreted as a first-passage percolation process on the infinite binary tree \( \bbT \). Let \( (\cE_x)_{x \in \bbT} \) be a collection of i.i.d. exponential random variables with mean one. Define the passage time to a vertex \( x \in \bbT \) as
\begin{equation} \label{eq:exl}
\tau_x := \sum_{y \prec x} \cE_y,
\end{equation}
where the sum runs over all ancestors \( y \) of \( x \) (excluding \( x \) itself). Then, the random tree at time \( t \) is given by
\begin{equation} \label{defct}
\g_t := \left\{ x \in \bbT \ : \ \tau_x \leq t \right\}.
\end{equation}

We denote by \( {\bf P} \) the law of the random process \( (\g_t)_{t\ge 0} \), and by \( {\bf E} \) the associated expectation.
The following representation of the continuous time equation can be derived as is \cite[Lemma 3.1]{caputo2025cutoff} or, alternatively, as in \cite[Section 3.2]{CP2024nonlinear}.

\begin{lemma}\label{lem:rep_ct}
Let \( (\g_t) \) be the branching process defined above. Then, for any initial measure \( p \in \cP(\O) \), the solution \( p_t \) to the nonlinear equation \eqref{non-linear rqs}
is given by 
\begin{equation} \label{eq:solt}
p_t = {\bf E}\left[ T_{\g_t}(p) \right].
\end{equation}
\end{lemma}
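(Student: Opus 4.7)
The natural strategy is to set $q_t := {\bf E}[T_{\gamma_t}(p)]$ and to show that $q_t$ solves the Cauchy problem \eqref{non-linear rqs}; by the uniqueness already quoted in the paper this will force $q_t = p_t$. The initial condition $q_0 = p$ is immediate, since $\gamma_0 = \{\emptyset\}$ and $T_{\{\emptyset\}}(p) = p$.

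The key step is a Duhamel-type identity obtained by conditioning on the clock $\cE_\emptyset$ at the root. On $\{\cE_\emptyset > t\}$, which has probability $e^{-t}$, no split has occurred and $T_{\gamma_t}(p) = p$. On $\{\cE_\emptyset = s\}$ with $s\in(0,t)$, the two subtrees $\gamma^{(0)},\gamma^{(1)}$ rooted at the children of $\emptyset$ evolve, over the remaining time $t-s$, as two independent copies of the original branching process; in particular $\gamma^{(0)}_{t-s}$ and $\gamma^{(1)}_{t-s}$ are i.i.d.\ copies of $\gamma_{t-s}$, by the independence of the family $(\cE_x)_{x\in\bbT}$. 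By the recursive bottom-up construction of $T$,
\[
T_{\gamma_t}(p) \;=\; T_{\gamma^{(0)}_{t-s}}(p)\,\circ\,T_{\gamma^{(1)}_{t-s}}(p).
\]
Since $\circ$ is bilinear on (signed) measures, expectation commutes with it when applied to independent random measures, so
\[
{\bf E}\bigl[\,T_{\gamma_t}(p)\,\big|\,\cE_\emptyset=s\,\bigr] \;=\; q_{t-s}\circ q_{t-s}.
\]
Integrating against $e^{-s}\,\mathbf{1}_{s<t}\,ds$ and adding the no-split contribution yields the integral equation
\[
q_t \;=\; e^{-t}\,p \;+\; \int_0^t e^{-s}\,q_{t-s}\circ q_{t-s}\,ds.
\]

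Changing variables $u=t-s$ and multiplying through by $e^t$ gives $e^t q_t = p + \int_0^t e^u\, q_u\circ q_u\, du$. Differentiating in $t$ one obtains $\dot q_t = q_t\circ q_t - q_t$ with $q_0=p$, so $q_t$ solves \eqref{non-linear rqs}, and uniqueness identifies it with $p_t$. The only mildly non-routine point is justifying the exchange of expectation and $\circ$; this rests on the bilinearity of the collision product extended to signed measures, together with the independence of the two subtree branches built into the graphical construction through $(\cE_x)_{x\in\bbT}$. I do not anticipate any genuine obstacle beyond this bookkeeping.
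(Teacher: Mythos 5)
Your proof is correct, and it follows the standard first-splitting (Duhamel) argument that the paper implicitly invokes by citing \cite[Lemma 3.1]{caputo2025cutoff} and \cite[Section 3.2]{CP2024nonlinear} rather than giving a proof. The key steps you identify — conditioning on the root clock $\cE_\emptyset$, using the branching/memorylessness property to get two i.i.d.\ copies of $(\gamma_{t-s})$ on the subtrees, passing the expectation through the bilinear product $\circ$ via independence, and then differentiating the resulting Volterra-type integral equation $e^t q_t = p + \int_0^t e^u q_u \circ q_u\,du$ — are exactly the ingredients of the cited construction, and the finiteness of $\O$ makes the interchange of expectation and $\circ$ and the differentiation routine.
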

%

\subsection{Reduction to discrete time and the case \(J=0\)}
The discrete time version of the equation \eqref{non-linear rqs} is the dynamical system $\Phi_k, k\in\bbN$, defined by 
\[\Phi_k(p) = \Phi_{k-1}(p)\circ \Phi_{k-1}(p)\,, \quad k\in\bbN,
\quad \Phi_0(p)=p\,.\] 
 Therefore, adopting the notation introduced above, the probability measure $\Phi_{k}(p)$ can be written as $T_{\G_k}(p)$, where $\G_k$ denotes the rooted regular binary tree with depth $k$; see Figure \ref{fig:fig1}
for an illustration.

\begin{figure}[h]
\centering
\begin{tikzpicture}[scale=0.8]
    \draw  (1,-1) -- (2,1);
    \draw  (3,-1) -- (2,1);
    \draw  (2,1) -- (4,3);
    \draw  (5,-1) -- (6,1);
    \draw  (7,-1) -- (6,1);
    \draw  (6,1) -- (4,3);
    
    \node[shape=circle, draw=black, fill = white, scale = 1.5]  at (4,3) {}; 
    \node[shape=circle, draw=black, fill = white, scale = 1.5]  at (2,1) {};
    \node[shape=circle, draw=black, fill = gray, scale = 1.5]  at (1,-1) {};
    \node[shape=circle, draw=black, fill = gray, scale = 1.5]  at (3,-1) {};
    \node[shape=circle, draw=black, fill = white, scale = 1.5]  at (6,1) {};
    \node[shape=circle, draw=black, fill = gray, scale = 1.5]  at (5,-1) {};
    \node[shape=circle, draw=black, fill = gray, scale = 1.5]  at (7,-1) {};

    \node at (1/4,-1) {$p$};
    \node at (9/4,-1) {$p$};
    \node at (17/4,-1) {$p$};
    \node at (25/4,-1) {$p$};
    \node at (1,1) {$p \circ p$};
    \node at (20/4,1) {$p \circ p$};
    \node at (7/4+.1,3) {$(p\circ p) \circ (p \circ p)$};
\end{tikzpicture}
\caption{$\Phi_{2}(p)=(p\circ p) \circ (p \circ p)$ is the distribution at the root of the binary tree $\G_2$ with depth $2$.}
\label{fig:fig1}
\end{figure}
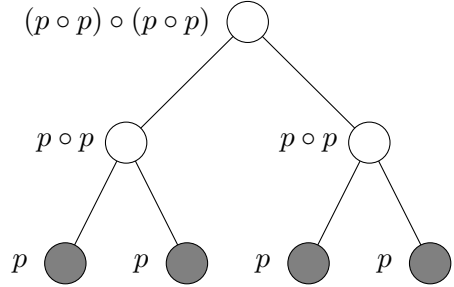

We are going to compare the continuous time process with the discrete time approximation, and then reduce to the simpler case \(J = 0\). 
As above, we extend the notation to include possibly distinct measures assigned to the leaves of the tree. Thus, we write \(\Phi_k(p_1,\dots,p_{2^k})=T_{\G_k}(p_1,\dots,p_{2^k})\), for the distribution at the root of the regular rooted tree $\G_k$ when the $2^k$ leaves are assigned the distributions $p_1,\dots,p_{2^k}\in\cP(\O)$. 
Similarly, we write \(\Phi^0_k(p_1,\dots,p_{2^k})\) for the corresponding quantity when collisions are computed with the $J=0$ kernel \(\mathcal{Q}_0^\mathcal{K}\) instead of \(\mathcal{Q}_J^\mathcal{K}\).
%
%

\begin{lemma}\label{Lemma reduction conv}
Fix a symmetric, stochastic $n\times n$ matrix $\cK$, and let $\cA$ denote its irreducible components. Fix \(m \in (-1,1)^{\mathcal{A}}\). 
Suppose there exist a positive constant \(\varepsilon := \varepsilon(m,\mathcal{K})\) and a positive integer \(u := u(m,\mathcal{K})\) such that
\begin{equation}\label{assumpJ=0}
    \Phi^0_u(p_1,\dots,p_{2^u})(\t) \geq \varepsilon, \quad \forall \t\in\O,
\end{equation}
for every choice of  \( (p_1, \ldots, p_{2^u})\) such that $m_\cA(p_i)=m$. 
Then,  there exist constants \(t_0>0\) and \(\varepsilon_0>0\) depending only on $m,\cK,J$,  such that for all $p\in\cP(\O)$ with $m_\cA(p)=m$, the solution \(p_t\) of \eqref{non-linear rqs} satisfies 
\begin{equation}\label{eq:desib}
p_t( \tau)\geq \varepsilon_0, \quad \forall\tau\in\Omega\,, \;\forall\,t\ge t_0\,.
\end{equation}
\end{lemma}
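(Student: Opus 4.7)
The plan is to combine the tree representation $p_t = \mathbf{E}[T_{\gamma_t}(p)]$ of Lemma \ref{lem:rep_ct} with a pointwise comparison between the $J$-kernel $\cQ_J^\cK$ and its infinite-temperature counterpart $\cQ_0^\cK$, thereby reducing the continuous-time positivity bound to the assumed $J=0$ bound at the top of a sufficiently large portion of the branching tree.

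First I would establish the kernel comparison. Since the configurations $\tau = S_\ell(\sigma, \sigma'_k)$ and $\tau' = S_k(\sigma', \sigma_\ell)$ differ from $\sigma, \sigma'$ by at most one spin flip on each coordinate, the ratio $a = \mu_J(\tau)\mu_J(\tau')/[\mu_J(\sigma)\mu_J(\sigma')]$ satisfies $|\log a| \le 4\overline{J}$. Consequently the Metropolis acceptance probability $p_J = a/(1+a)$ and its complement $1-p_J$ are both bounded below by a positive constant $c_J$ depending only on $\overline{J}$, and comparison with $p_0 \equiv 1/2$ yields the pointwise bound $\cQ_J^\cK(\sigma,\sigma';\tau,\tau') \ge c\,\cQ_0^\cK(\sigma,\sigma';\tau,\tau')$ with $c := 2c_J > 0$, uniformly in all arguments. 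By bilinearity this extends to $p \circ_J q \ge c\,(p \circ_0 q)$ pointwise for arbitrary $p, q \in \cP(\O)$, where $\circ_J$ and $\circ_0$ denote the collision products associated to $\cQ_J^\cK$ and $\cQ_0^\cK$ respectively, and a straightforward induction on the tree size propagates it to
\[
T_\gamma^J(p_1,\dots,p_{|L(\gamma)|}) \;\ge\; c^{N(\gamma)}\, T_\gamma^0(p_1,\dots,p_{|L(\gamma)|}),
\]
where $T_\gamma^J$ and $T_\gamma^0$ denote the tree compositions with the $J$- and $J=0$ kernels and $N(\gamma)$ is the number of internal nodes of $\gamma$.

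Next I would exploit the branching-process representation. Consider the event $E_t := \{\Gamma_u \subseteq \gamma_t\}$; by property (1) in the definition of binary trees this equals $\{\tau_x \le t \text{ for every } x \in \{0,1\}^u\}$, and since each $\tau_x$ is a finite sum of i.i.d.\ exponentials one has $\mathbf{P}(E_t) \to 1$ as $t \to \infty$. Fix $t_0$ so that $\mathbf{P}(E_{t_0}) \ge 1/2$. On $E_t$ the random tree decomposes as $\Gamma_u$ with sub-trees $\gamma_t^x$ hanging below each $x \in L(\Gamma_u)$, so by the recursive definition of $T_{\cdot}(p)$ one has $T_{\gamma_t}(p) = \Phi_u(q_{x_1},\dots,q_{x_{2^u}})$ with $q_{x_i} := T_{\gamma_t^{x_i}}(p)$. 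Since every collision preserves $m_\cA$ by Lemma \ref{conserved magn}, each $q_{x_i}$ satisfies $m_\cA(q_{x_i}) = m_\cA(p) = m$, so the hypothesis \eqref{assumpJ=0} applies to $\Phi_u^0(q_{x_1},\dots,q_{x_{2^u}})$. Combining with the tree comparison applied to $\Gamma_u$ (which has $N(\Gamma_u) = 2^u - 1$ internal nodes) gives, on $E_t$,
\[
T_{\gamma_t}(p)(\tau) \;\ge\; c^{2^u-1}\,\Phi_u^0(q_{x_1},\dots,q_{x_{2^u}})(\tau) \;\ge\; c^{2^u-1}\,\varepsilon,\qquad \forall\,\tau \in \O,
\]
and taking expectations yields $p_t(\tau) \ge \mathbf{P}(E_t)\,c^{2^u-1}\varepsilon \ge \tfrac12 c^{2^u-1}\varepsilon =: \varepsilon_0$ for all $t \ge t_0$, which is exactly \eqref{eq:desib}.

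I expect the main obstacle to be the pointwise kernel comparison in the first step, which rests on the uniform two-sided bound $p_J,\,1-p_J \ge c_J$ over all $(\sigma,\sigma',\ell,k)$; once that estimate is in hand, the remainder of the argument is a mechanical combination of the bilinearity of $\circ$, the branching-process representation, and the conservation of the magnetization profile, with no substantial additional work required.
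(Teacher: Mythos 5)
Your proof is correct and follows essentially the same strategy as the paper's: reduce to the $J=0$ case via a quantitative pointwise comparison of kernels, use the branching-tree representation and the event $\{\Gamma_u\subseteq\gamma_t\}$, and invoke the conservation of the magnetization profile at the intermediate leaves. The only difference is cosmetic: you derive the multiplicative bound $\cQ_J^\cK \ge c\,\cQ_0^\cK$ directly from two-sided bounds on the acceptance probability and propagate it through the tree by induction, whereas the paper writes $\cQ_J^\cK = \delta_J\cQ_0^\cK + (1-\delta_J)\overline{\cQ}$ as a convex mixture and uses a ``with probability $\delta_J^{2^u-1}$ all collisions happen at infinite temperature'' argument. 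These are interchangeable formulations of the same comparison (indeed your $c=2/(1+e^{4\bar J})$ is slightly sharper than the paper's $\delta_J = \tfrac12 e^{-4\bar J}$). The minor gap you anticipated — the uniform bound $p_J, 1-p_J \ge c_J$ — indeed holds with $c_J = (1+e^{4\bar J})^{-1}$, since a single spin swap changes each of the two energies by at most $2\bar J$; the rest of the argument goes through as you wrote it.
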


\begin{proof}
As a first step, we show that assumption \eqref{assumpJ=0} extends to the case of arbitrary interaction \(J\). Specifically, there exists a constant \(\varepsilon_1 := \varepsilon_1(J, \mathcal{K}, m) > 0\) 
such that
\begin{equation}\label{assump2J}
    \Phi_{u}(p_1,\dots,p_{2^u})(\t) \geq \varepsilon_1,
\end{equation}
for all $\t\in\O$, and for every choice of  \( (p_1, \ldots, p_{2^u})\) such that $m_\cA(p_i)=m$. Indeed, 
observe that 
\begin{equation}\label{eq:djq}
    \mathcal{Q}_J^\mathcal{K} = \delta_J \mathcal{Q}_0^\mathcal{K} + (1 - \delta_J) \overline{\mathcal{Q}},\qquad \delta_J := \tfrac12\,e^{-4\bar J}\,
\end{equation}
where \(\overline{\mathcal{Q}}\) is a probability kernel on \(\Omega \times \Omega\) that does not require explicit specification, and  $\bar J:=\max_i\sum_j|J_{i,j}|$. To prove \eqref{eq:djq} it suffices to use the definition \eqref{gxk2} and note that 
\begin{equation}
p_J(\ell,k\tc\si,\si')\geq \frac1{1+e^{4\bar J}}\,,
\end{equation}
for all \(\sigma, \sigma' \in \Omega\) and \(\ell, k,  \in [n]\). From the identity \eqref{eq:djq} it follows that, 
with probability $\delta_J$ the collision is performed according to the kernel $\mathcal{Q}_0^\mathcal{K}$. Since in the regular binary tree $T_{\G_u}$ there are $N=2^u-1$ collisions, one can write, 
for any $u\in\bbN$, and any choice of $p_1,\dots,p_{2^u}\in\cP(\O)$, 
\begin{equation}\label{eq:djqq}
   \Phi_u(p_1,\dots,p_{2^u})= \delta_J^N \Phi^0_u(p_1,\dots,p_{2^u}) + (1- \delta_J^N ) \Psi(u, p_1,\dots,p_{2^u})
   \,,
\end{equation}
where $\Psi(u, p_1,\dots,p_{2^u})\in\cP(\O)$ is some unspecified distribution.  
Therefore, the assumption \eqref{assumpJ=0}  implies \eqref{assump2J} with the same $u$ and with $\e_1= \e\delta_J^N$. 

Now, fix \(p \in \mathcal{P}(\Omega)\) such that \(m_{\mathcal{A}}(p) = m\). Using the representation formula for \(p_t\) from Lemma~\ref{lem:rep_ct}, and considering the regular tree \({\G_u}\), one has the lower bound
\begin{equation}
    p_t(\t) = {\bf E}[T_{\gamma_t}(p)(\t)] \geq {\bf E}[T_{\gamma_t}(p)(\t) ;\,{\G_u} \subset \gamma_t].
\end{equation}

On the event \({\G_u} \subset \gamma_t\), there exists a (random) vector \( (\mu_1, \ldots, \mu_{2^{u}})\), where each \(\mu_i\in\cP(\O)\) is the result of recursive collisions along subtrees rooted at the leaves of 
\({\G_u}\). Due to the conservation law in Lemma~\ref{conserved magn}, each distribution \(\mu_i \) satisfies $m_\cA(\mu_i)=m_\cA(p)=m$, and therefore the inequality \eqref{assump2J} applies. It follows that, whenever $ {\G_u} \subset \gamma_t$, 
\begin{equation}
    T_{\gamma_t}(p)(\t) = \Phi_u(\mu_1,\dots,\mu_{2^u})(\t)\ge  \varepsilon_1.
\end{equation}
In conclusion, for all $\t\in\O$,
\begin{equation}
    p_t(\t) \geq \varepsilon_1 {\bf P}({\G_u}  \subset \gamma_t).
\end{equation}
To estimate the latter probability, note that ${\G_u}  \subset \gamma_t$, iff for each leaf \(x \in L(\G_u)\), its associated arrival time satisfies \(\tau_x \leq t\), where \(\tau_x\) is distributed as a Gamma(\(u, 1\)) random variable; see \eqref{eq:exl}. By a union bound, and using a standard tail estimate, for $t>u$, 
\begin{equation}
    {\bf P}({\G_u}  \subset \gamma_t) \geq 1-2^{u} e^{-t} (e t/u)^u \,.
\end{equation}
Choosing \(t_0 = t_0(u) > 0\) such that the above probability is at least $1/2$, one concludes the desired bound \eqref{eq:desib} with $\e_0=\e_1/2$.
\end{proof}

\subsection{Proof of Lemma \ref{lem:irre}}
According to Lemma \ref{Lemma reduction conv}, it suffices to establish that the assumption \eqref{assumpJ=0} is satisfied. Note that in this case one actually obtains a slightly stronger statement, since the bound \eqref{eq:irreb} is uniform over all $p\in\cP(\O)$ with the given magnetization vector $m_\cA(p)=m\in(-1,1)^\cA$.

We now focus on establishing the validity of the assumption \eqref{assumpJ=0}. 
The set of stationary measures consists of non-homogeneous products of Bernoulli measures $\pi=\pi^\a$, where the parameters $\a=(\a_i)\in(0,1)^n$ are constant within each component of the partition \(\mathcal{A}\),
that is \[
\pi(\sigma) = \textstyle{\prod_{i=1}^n} \pi_i(\sigma_i),
\]
with \(\pi_i(\si) = {\rm Be}(\a_i)$, and \(\a_i = \a_j\) for all \(i, j \in A\), for all \(A \in \mathcal{A}\); see Lemma~\ref{charact-of-stationarity}.
Moreover, for any $m\in(-1,1)^\cA$ there is a unique vector $\a$ with \(\a_i = \a_j\) for all \(i, j \in A\), for all \(A \in \mathcal{A}\), such that the associated measure $\pi=\pi^\a$ satisfies $m_\cA(\pi)=m$; see Lemma \ref{lem:uni}.
\begin{lemma}\label{J=0 irredu}
Fix $m\in(-1,1)^\cA$, and let \(\pi\) denote the associated product of Bernoulli measures as above with \(m_\cA(\pi)=m\). Then, for every \(\varepsilon > 0\), there exists  \(u := u(\varepsilon, \mathcal{K}, m)\) such that
\begin{equation}\label{eq:toprotv}
    \|\Phi_u^0(p_1,\dots,p_{2^u}) - \pi\|_{\mathrm{TV}} \leq \varepsilon,
\end{equation}
for all choices of \(p_1, \ldots, p_{2^u}\in\cP(\O)\) with  \(m_\cA(p_i) = m\).
\end{lemma}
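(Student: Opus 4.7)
The plan is to prove the lemma via a graphical ancestry analysis. At $J=0$ the acceptance ratio $p_J(\ell,k\tc\sigma,\sigma')$ equals $1/2$ identically, so each internal-node collision in the regular binary tree $\G_u$ does one of the following with equal probability $1/2$ each: leave both inputs unchanged, or pick $\ell\in [n]$ uniform and $k\sim\cK(\ell,\cdot)$ and exchange $\sigma_\ell$ with $\sigma'_k$. Iterating this rule down $\G_u$, each spin $\tau_i$ at site $i\in [n]$ of the root admits a random \emph{ancestry} $(L_i,J_i)$: a source leaf $L_i\in\{1,\dots,2^u\}$ and a source site $J_i\in [n]$ such that $\tau_i=\sigma^{(L_i)}_{J_i}$, where $\sigma^{(l)}\sim p_l$ are independent leaf draws.

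I would then establish three structural properties of the joint ancestry. \textbf{(F)} \emph{Fragmentation:} the $n$ ``site labels'' $\{1,\dots,n\}$, initially colocated at the root, propagate down the tree so that any pair sharing a node separates into the two children with probability $1/n$ per level (the collision targets one of them and the swap is accepted). A union bound over pairs gives $\Pr(\Gamma_1)\ge 1-\binom{n}{2}(1-1/n)^u$ for $\Gamma_1:=\{\text{all }L_i\text{ distinct}\}$. \textbf{(M)} \emph{Mixing within components:} the site process $t\mapsto J_i^{(t)}$ along the root-to-leaf path is a Markov chain on $[n]$ with transition $M:=(1-\tfrac{1}{2n})I+\tfrac{1}{2n}\cK$; conditionally on a path with $r$ acceptances, $J_i\sim \cK^r\delta_i$, and marginally $r\sim\mathrm{Binomial}(u,1/(2n))$. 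Since $\cK$ restricted to the component $A_i\ni i$ is symmetric and irreducible, $\cK^r\delta_i$ converges in TV to the uniform measure on $A_i$ exponentially in $r$. \textbf{(I)} \emph{Subtree independence:} once fragmentation has completed (at some random level $T_{\rm frag}\leq u$, which happens on $\Gamma_1$), each label $i$ continues its evolution inside its own subtree, driven by an independent collection of random collision variables; therefore, given the fragmentation structure up to $T_{\rm frag}$, the ancestries $(L_i,J_i)$ are conditionally independent across $i$.

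Combining these, fix $\varepsilon>0$, pick $r_0$ large enough that $\cK^{r_0}\delta_j$ is within $\varepsilon/(3n)$ of the uniform distribution on $A_j$ in TV for every $j\in [n]$, and define the good event $\Gamma:=\Gamma_1\cap\{r(L_i)\geq r_0\ \forall i\}$, where $r(L_i)$ is the number of acceptances along the path to $L_i$. Standard binomial concentration together with (F) give $\Pr(\Gamma^c)\to 0$ as $u\to\infty$ at a rate depending only on $n,\cK,\varepsilon$. On $\Gamma$, conditioning on the fragmentation structure and using (I), one has $\mathbb{E}[\prod_i p_{L_i}[\sigma_{J_i}=\tau_i]\tc\text{structure}]=\prod_i \alpha_i(\tau_i)$ with $\alpha_i(a):=\mathbb{E}[p_{L_i}[\sigma_{J_i}=a]\tc\text{structure}]$. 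The magnetization hypothesis $m_\cA(p_l)=m$ gives $\tfrac{1}{|A_i|}\sum_{j\in A_i}p_l[\sigma_j=a]=\pi[\sigma_i=a]$ for every leaf $l$, so (M) yields $|\alpha_i(a)-\pi[\sigma_i=a]|\leq\varepsilon/(3n)$. The standard product-measure TV bound $\|\bigotimes_i\mathrm{Be}(\alpha_i(1))-\pi\|_{\rm TV}\leq\sum_i|\alpha_i(1)-\pi[\sigma_i=1]|\leq\varepsilon/3$ together with averaging over structures then give $\|\Phi_u^0(p_1,\dots,p_{2^u})-\pi\|_{\rm TV}\leq\tfrac{\varepsilon}{3}+2\Pr(\Gamma^c)\leq\varepsilon$ for $u$ sufficiently large.

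The main obstacle is step (I): establishing the conditional independence of the $J_i$-processes after fragmentation, which is what promotes the per-site mixing estimate (M) into a joint product-measure TV bound rather than a mere control of single-site marginals. This is the new element beyond the fragmentation/coupon-collector argument of \cite{AC2024nonlinear}, which sufficed in the single-site case $\cK=I$ where $J_i\equiv i$ and no intra-component mixing is needed. A secondary subtlety is ensuring that the acceptance count $r(L_i)$ and the independence structure are both adequately controlled; this is handled by splitting the tree at level $T_{\rm frag}$ and analyzing the post-fragmentation subtrees independently.
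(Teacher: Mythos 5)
Your proposal takes essentially the same route as the paper. Your ``ancestry'' $(L_i,J_i)$ is exactly the Marked Partition Process of the paper read at the final level: the spatial-part fragmentation is your step (F), the mark evolution is your step (M), and the product form in the paper's representation $\Psi(\vec p(u),\cC^u)=\prod_i\psi(p_i,C_i^u)$ after fragmentation is your step (I). Your transition matrix $M=(1-\tfrac{1}{2n})I+\tfrac{1}{2n}\cK$ is precisely the paper's $\widehat\cK=\tfrac12 I+\tfrac12\bar\cK$, and your per-level separation rate $1/n$ matches the paper's coupon-collector estimate.

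There is, however, one genuine flaw in step (M). You condition on the number $r$ of accepted transpositions along an ancestry path and then claim that ``since $\cK$ restricted to $A_i$ is symmetric and irreducible, $\cK^r\delta_i$ converges in TV to the uniform measure on $A_i$ exponentially in $r$,'' proposing to pick $r_0$ so that $\cK^{r_0}\delta_j$ is within $\varepsilon/(3n)$ of uniform. This is false in general: $\cK$ restricted to $A_i$ need not be aperiodic. For example, with $n=2$, $\cK(1,2)=\cK(2,1)=1$ and $\cK(1,1)=\cK(2,2)=0$ gives a single irreducible component $\{1,2\}$ on which $\cK^{2r}\delta_1=\delta_1$ and $\cK^{2r+1}\delta_1=\delta_2$ for all $r$; no $r_0$ with your property exists, and your good event $\Gamma$ cannot be constructed. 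The issue disappears if you do not strip out the lazy steps: $M=(1-\tfrac{1}{2n})I+\tfrac{1}{2n}\cK$ is aperiodic precisely because of its lazy component, so $M^{u-u_0}\delta_j\to u(j)$ (uniform on $A_j$) in TV, uniformly over the starting site $j$. This is exactly what the paper uses in $\|\widehat\cK^{u-u_0}(j,\cdot)-u(j)\|_{\rm TV}$: one bounds the mark's law at level $u$ given the history up to a fixed level $u_0\ge H$, via the Markov property of the lazy chain and monotonicity of TV under the kernel, rather than via the number of nonlazy steps. Replacing your binomial-conditioning argument with this direct bound on $M^{u-u_0}\delta_j$ repairs the proof; the remaining structure of your argument is sound.
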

Before proving the lemma we show that  the assumption \eqref{assumpJ=0} is satisfied if the lemma holds. Indeed, 
since $m\in(-1,1)^\cA$ one has $\pi_* := \min_{\sigma \in \Omega} \pi(\sigma) > 0$. Thus, by 
Lemma~\ref{J=0 irredu} with \(\varepsilon= \pi_*/2\), and by definition of the total variation distance, 
\begin{equation}\label{assumpJ=00}
    \Phi^0_u(p_1,\dots,p_{2^u})(\t) \geq \pi(\t) - \frac{\pi_*}{2} \geq \e, \quad \forall \t\in\O.
\end{equation}
This ends the proof of Lemma \ref{lem:irre} assuming the validity of Lemma \ref{J=0 irredu}.

\subsection{Marked Partition Process}
We prove Lemma \ref{J=0 irredu} by deriving an explicit representation of the distribution $\Phi^0_u(p_1,\dots,p_{2^u})$ via a suitable discrete time fragmentation process, augmented with randomly walking labels, which we call the {\em Marked Partition Process} (MPP). Let $X=\{0\}\cup[n]$ denote the space of marks. At each time $u\in\bbN$, the marked partition process takes a value $\cC^u$ of the form 
$\cC^u=(C_1^u,\dots,C_{2^u}^u)$, with each fragment $C_i^u$  given by $C_i^u=(A^u_i,x^u_i)$, where $A^u_i\subset [n]$ is called the {\em spatial part}, and $x^u_i\in X$ is called the {\em mark} of the fragment $C_i^u$. The spatial parts are such that $\{A^u_i,\,i=1,\dots,2^u\}$ form a partition of $[n]$, for each $u\in\bbN$, that is $A^u_i\cap A^u_j=\emptyset $ for $i\neq j$ and $\cup_i A^u_i=[n]$ (the sets $A^u_i$ can be empty). 
To define the evolution of the process we introduce some more notation.  For any $x\in[n]$ we let \(X^x(u)\), for \(u \in \mathbb{N}\) denote the Markov chain with initial state \(x\) and transition matrix \(\mathcal{K}\). Similarly, we let \(Y^x(u)\), for \(u \in \mathbb{N}\), be the Markov chain with initial state $x \in [n]$, and transition matrix given by the lazy kernel
\[
\bar{\mathcal{K}} = \frac{1}{n}\mathcal{K} + \frac{n-1}{n} I,
\]
where \(I\) is the identity $n\times n$ matrix. With this notation, the MPP $\cC^0,\cC^1,\dots$ is defined as follows. 
At time $0$, we start with the single fragment $\cC^0=(C_1^0)$, where $C_1^0=([n],0)$. Recursively,  the process is defined by the following splitting and marking rules.
If $\cC^u=(C_1^u,\dots,C_{2^u}^u)$, 
then, independently for every $i=1,\dots,2^u$, sample a uniform random variable $U\in[n]$ and a uniform random variable $B\in\{1,2,3,4\}$, and set $
\cC^{u+1}=(C_1^{u+1},\dots,C_{2^{u+1}}^{u+1})
$, according to the rules
\begin{enumerate}
\item if $B=1$, then $(C_{2i-1}^{u+1},C_{2i}^{u+1})=(C_i^{u},(\emptyset,0))$, 
\item if $B=2$, then $(C_{2i-1}^{u+1},C_{2i}^{u+1})=((\emptyset,0),C_{i}^{u})$,
\item if $B=3$, $C_i^{u}=(A,0)$ and $U\in A$, then $(C_{2i-1}^{u+1},C_{2i}^{u+1})=((A\setminus\{U\},0),(\{U\},X^U(1))$,
\item if $B=3$, $C_i^{u}=(A,0)$ and $U\notin A$, then $(C_{2i-1}^{u+1},C_{2i}^{u+1})=((A,0),(\emptyset,0))$,
\item if $B=3$, $C_i^{u}=(\{i\},x)$, $x\in[n]$, then $(C_{2i-1}^{u+1},C_{2i}^{u+1})=((\{i\},Y^x(1)),(\emptyset,0))$,
\item if $B=4$, repeat the $B=3$ rules and then invert the order of $(C_{2i-1}^{u+1},C_{2i}^{u+1})$.

\end{enumerate}
We remark that, since we start with $\cC^0=(C_1^0)$, where $C_1^0=([n],0)$, the above rules are such that a nonzero mark can only be assigned to a spatial part consisting of a single site.  Thus, the above rules 
yield a well defined evolution. Note that, by construction, the spatial parts are such that $(A^u_i,\,i=1,\dots,2^u)$ forms a partition of $[n]$, at any time $u$.  We remark that the partition $(A^u_i)$ is monotone in the sense that  it can only become finer with time, until it consists only of empty sets or single sites with nonzero marks. One can define the {\em fragmentation time}
\begin{equation}\label{eq:fragt}
H = \min \left\{ u \in \mathbb{N} : \text{all nonempty sets have nonzero marks in } \cC^u \right\}.
\end{equation}
For each time $u$, there is at most one spatial part $A_i^u$ which can have cardinality larger than $1$, and, with probability $1/2$ it loses one element every time the uniform random variable  $U\in[n]$ belongs to it. Therefore, the erosion of the largest spatial part can be described by a simple coupon collecting process, where at each step, with probability $1/2$, a coupon is chosen uniformly at random, while with probability $1/2$ no coupon is collected. The fragmentation time $H$ coincides with the first time where $n$ distinct coupons have been collected, and thus a union bound gives
\begin{equation} \label{coupestimateH}
    \mathbb{P}(H \ge u) \le n\,\left(1-\tfrac1{2n}\right)^u \le n\,e^{-u/2n}\,.
\end{equation}

We note that there is a natural representation of the Marked Partition Process in terms of a regular binary tree. Namely, we can assign to each node of the rooted regular binary tree with depth $u\in\bbN$, $\G_u=\{\emptyset\}\cup_{k=1}^u\{0,1\}^k$, a marked set in such a way that the root is assigned the set $C_1^0=([n],0)$, and if the nodes with depth $k$ are assigned, say from left to right,  the sets $(C_1^k,\dots,C_{2^{k}}^k)$, then the nodes at the next generation are assigned, respectively, from left to right, the sets $(C_1^{k+1},\dots,C_{2^{k+1}}^{k+1})$, with the pair $(C_{2i-1}^{k+1},C_{2i}^{k+1})$, being assigned to the two children of the node corresponding to $C_i^{k}$. In this way, starting from the root, the marks along any path is always $0$ until the first time where it takes some value $x\in[n]$, after which it evolves, along the unique  branch with nonempty fragments, with marks given the lazy Markov chain $ \bar{\mathcal{K}}$. In particular, any marked set of the form $(\{i\},x)$, $x\neq 0$, must be such that $i$ and $x$ belong to the same irreducible component $A\in\cA$. 

We now describe how the above defined process encodes the measure $\Phi^0_u(p_1,\dots,p_{2^u})$.  
 For any $u\in\bbN$, and any realization of $\cC^u$, for any vector $\vec p(u)=(p_1,\dots,p_{2^u})$, 
define the measure $\Psi(\vec p(u),\cC^u)\in\cP(\O)$ by the following rule. Given $\cC^u=(C_1^u,\dots,C_{2^u}^u)$, there is at most one marked set $C_{i}^u$ of the form $(A,0)$, with $\emptyset\neq A\subset [n]$ and mark $0$, and all other marked sets are either empty or of the form $(\{j\},x_j)$ for some $j\in [n]$ and some nonzero mark $x_j\in[n]$. For any $\t\in\O$, $p\in\cP(\O)$, $i=1,\dots,2^u$, we define 
 \begin{equation} \label{eq:prepres}
\psi(p,C_i^u)(\t) = \begin{cases}
p_{A}(\t_{A}) & \text{ if } C_{i}^u=(A,0)\\
p_{x}(\t_{j}) & \text{ if } C_{i}^u=(\{j\},x)
\end{cases} 
\end{equation}
In this formula, we use the notation $p_{A}$ for the marginal at $A$ of the measure $p$ and $p_{x}$ for the marginal at $\{x\}$ of the measure $p$, and as usual $\t_A$ denotes the spin configuration $(\t_x, \,x\in A)$, with $\t_x$ denoting the spin at $x$. Moreover, we adopt the convention that, when $A=\emptyset$, then we set $p_{A}(\t_{A})=1$.
Finally, we set
 \begin{equation} \label{eq:prepres1}
\Psi(\vec p(u),\cC^u)(\t)=\prod_{i=1}^{2^u}\psi(p_i,C_i^u)(\t)\,.
\end{equation}
Note that $\Psi(\vec p(u),\cC^u)\in\cP(\O)$, for any $u$, any choice of the distribution vector $\vec p(u)=(p_1,\dots,p_{2^u})$, and for any realization of the MPP $(\cC^u, u=0,1,\dots)$.  
In words,  $\Psi(\vec p(u),\cC^u)$ is obtained by taking products of the marginals over the largest spatial part $(A,0)$ in the marked partition $\cC^u$, and the marginals over the single sites identified by the nonzero marks, with the caveat that the marginal of $\Psi(\vec p(u),\cC^u)$ on the single sites appearing with a nonzero mark is given by the single site marginal identified by the nonzero mark. 
\begin{lemma}\label{lem:repres}
For any $u\in\bbN$, $\t\in\O$, any $p_1,\dots,p_{2^u}\in\cP(\O)$, 
 \begin{equation} \label{eq:prepres2}
\Phi^0_u(p_1,\dots,p_{2^u})=\bbE\left[\Psi(\vec p(u),\cC^u)(\t)\right]\,,
\end{equation}
where the expectation is over the MPP. 
\end{lemma}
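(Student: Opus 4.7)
The plan is to proceed by induction on $u$, using the bottom-up recursion
\[
\Phi^0_{u+1}(p_1,\dots,p_{2^{u+1}}) \;=\; \Phi^0_u(q_1,\dots,q_{2^u}),\qquad q_i:=p_{2i-1}\circ p_{2i},
\]
together with the tower property for the MPP expectation. The base case $u=0$ is immediate, since $\cC^0=(([n],0))$ and $\Psi(\vec p(0),\cC^0)(\t)=p_1(\t)=\Phi^0_0(p_1)(\t)$.

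For the inductive step, I would condition on $\cC^u$ and exploit the fact that, by construction, the $2^u$ fragments of $\cC^u$ split independently into their children in $\cC^{u+1}$ (each with a fresh uniform $(U,B)$ and fresh one-step moves of $X$ and $Y$). Combined with the product form of $\Psi$, this yields
\[
\bbE\bigl[\Psi(\vec p(u+1),\cC^{u+1})(\t) \,\big|\, \cC^u\bigr] \;=\; \prod_{i=1}^{2^u} \bbE\bigl[\psi(p_{2i-1},C_{2i-1}^{u+1})(\t)\,\psi(p_{2i},C_{2i}^{u+1})(\t)\,\big|\, C_i^u\bigr].
\]
Hence the induction reduces to the local identity
\[
\bbE\bigl[\psi(p,C^L)(\t)\,\psi(q,C^R)(\t)\,\big|\,C\bigr] \;=\; \psi(p\circ q,\,C)(\t),
\]
for every marked set $C$ and every $p,q\in\cP(\O)$, where $(C^L,C^R)$ denotes the outcome of one MPP splitting step applied to $C$. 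Granting this identity, the conditional expectation above equals $\prod_i \psi(q_i,C_i^u)(\t)=\Psi(\vec q(u),\cC^u)(\t)$, and the inductive hypothesis applied to $(q_1,\dots,q_{2^u})$ closes the step.

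I would verify the local identity by a case analysis on the shape of $C$. The case $C=(\emptyset,0)$ is trivial. For $C=(A,0)$, summing the contributions of $B\in\{1,2,3,4\}$ (each of probability $1/4$) and averaging over $U$ uniform in $[n]$ and $X^U(1)\sim\cK(U,\cdot)$ produces
\[
\tfrac{2n-|A|}{4n}\bigl(p_A(\t_A)+q_A(\t_A)\bigr)+\tfrac{1}{4n}\!\!\sum_{\ell\in A,\,k}\!\!\cK(\ell,k)\bigl[p_{A\setminus\{\ell\}}(\t_{A\setminus\{\ell\}})\,q_k(\t_\ell)+p_k(\t_\ell)\,q_{A\setminus\{\ell\}}(\t_{A\setminus\{\ell\}})\bigr],
\]
and a direct computation of the $A$-marginal of $p\circ q$ via \eqref{gxk2}--\eqref{gxk4} with $J=0$ (so that the acceptance probability is $1/2$) returns exactly the same expression; the four values of $B$ correspond precisely to the four equiprobable outcomes of the $J=0$ collision, arising from the symmetrization $\si\leftrightarrow\si'$ combined with the swap/no-swap dichotomy. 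For $C=(\{j\},x)$ with $x\in[n]$, the lazy one-step update $Y^x(1)\sim\bar{\cK}(x,\cdot)=\tfrac1n\cK(x,\cdot)+\tfrac{n-1}{n}\delta_x$ encodes exactly the two alternatives at the current collision (with probability $1/n$ the swap site $U$ equals $x$ and the mark is resampled from $\cK(x,\cdot)$; with probability $(n-1)/n$ it does not, and the mark is preserved), and the analogous computation recovers the single-site marginal $(p\circ q)_x(\t_j)$. The main obstacle is essentially bookkeeping: one must carefully match the MPP's four splitting rules, including the asymmetry between fresh fragments $(A,0)$ and tracked singletons $(\{j\},x)$, with the probabilistic structure of $\cQ_0^{\cK}$. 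Once the case analysis is carried out, the MPP is seen to be designed precisely so that the local identity holds, and the induction closes.
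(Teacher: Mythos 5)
Your proof is correct and follows essentially the same strategy as the paper's: induction on $u$ via the recursion $\Phi^0_{u+1}(p_1,\dots,p_{2^{u+1}})=\Phi^0_u(q_1,\dots,q_{2^u})$ with $q_i=p_{2i-1}\circ p_{2i}$, conditional independence of the fragment splittings, and the same local one-step identity $\bbE[\psi(p,C^L)\psi(q,C^R)\,|\,C]=\psi(p\circ q,C)$, which the paper states (as equation \eqref{defT01a}) after verifying the $u=1$ case $C=([n],0)$ and then appealing to ``the same reasoning.'' Your case analysis is slightly more explicit than the paper's sketch (you also spell out the general $C=(A,0)$ and $C=(\{j\},x)$ verifications), but the underlying argument is identical.
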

\begin{proof}
We use induction over $u$. Note that it is true at $u=0$, since $\cC^u$ is given by the single marked set $C_1^0=([n],0)$ and $\Phi^0_u(p)=p$. Let us check that for all $p,q\in\cP(\O)$, 
 \begin{equation} \label{eq:prepres21}
\Phi^0_1(p,q)(\t)=p\circ q\,(\t)=\bbE[\psi(p,C_1^1)(\t)\psi(q,C_2^1)(\t)]=\bbE\left[\Psi(p,q,\cC^1)(\t)\right]\,,
\end{equation}
which is the identity \eqref{eq:prepres2} for $u=1$. 
To see this, note that
by definition of the collision kernel \(\mathcal{Q}_0^\mathcal{A}\), in the case $J=0$ one has
\begin{align}\label{defT01}
   & (p \,\circ\, q)(\tau)  = 
\frac14 \,p(\t)+\frac14\, q(\t)  \;+\nonumber \\
& \;\;\;  +\frac1{4n}  \sum_{\ell, k \in [n]} \mathcal{K}(\ell, k) \left[ p_{[n] \setminus \{\ell\}}(\tau_{[n] \setminus \{\ell\}}) \, q_k(\tau_\ell) + p_k(\tau_\ell) \, q_{[n] \setminus \{\ell\}}(\tau_{[n] \setminus \{\ell\}})\right],
\end{align}
where $p_A$ denotes the marginal of $p$ over a subset $A\subset[n]$, and we use $p_k$ for the marginal of $p$ over a single site $\{k\}$.
Points (1) and (2) in the definition of the MPP take care of the terms $\frac14\,p$ and $\frac14\,q$ in \eqref{defT01}.
Moreover, points (3)-(4)-(5) provide the term \[
\frac14\,p_{[n] \setminus \{U\}}(\tau_{[n] \setminus \{U\}}) \, q_{X^U(1)}(\tau_U)\] 
in \eqref{defT01}. Finally, point (6) provides the term \[\frac14\, p_{X^U(1)}(\tau_U)\,q_{[n] \setminus \{U\}}(\tau_{[n] \setminus \{U\}}) \] in \eqref{defT01}.
Taking the expectation over $U,X^U(1)$ yields \eqref{eq:prepres21}.
  
  Reasoning in the same way, one obtains, for any realization of $C^u_i$,  for all $\t$, 
  \begin{align}\label{defT01a}
   \psi(p\circ q,C^u_i)\,(\t)=
   \bbE\left[\psi(p,C^{u+1}_{2i-1})(\t)\psi(q,C^{u+1}_{2i})(\t)\tc C^u_i\right],
   \end{align}
   where $\bbE[\cdot\tc C^u_i]$ denotes the expectation over one step of the MPP  $C^u_i \mapsto (\cC^{u+1}_{2i-1},\cC^{u+1}_{2i})$, conditionally on $C^u_i$.

Assuming the validity of \eqref{eq:prepres2} at $u$, the formula for $u+1$ can be derived by using \eqref{eq:prepres21} applied to the collisions $ p_{2i-1}\circ p_{2i}$, for every $i=1,\dots,2^u$. Indeed, setting $\vec q(u)=(q_1,\dots,q_{2^u})$ with $q_i :=p_{2i-1}\circ p_{2i}$,  by repeated application of \eqref{defT01a}, for all $\t\in\O$, 
\begin{align} \label{eq:prepres3}
&\bbE\left[\Psi(\vec p(u+1),\cC^{u+1})(\t)\tc\cC^s,\,s\le u\right] 
\nonumber \\& \; =\prod_{i=1}^{2^{u}}\bbE\left[\psi(p_{2i-1},C_{2i-1}^{u+1})(\t)\psi(p_{2i},C_{2i}^{u+1})(\t)\tc\cC^s,\,s\le u\right]=\Psi(\vec q(u),\cC^u)(\t)\,.
\end{align}
Taking the expectation, and using the inductive assumption, one finds 
\[
\bbE\left[\Psi(\vec p(u+1),\cC^{u+1})\right]=\Phi^0_u(q_1,\dots,q_{2^u})\,.
\]
Since $\Phi^0_u(q_1,\dots,q_{2^u})=\Phi^0_{u+1}(p_1,\dots,p_{2^{u+1}})$, this proves the desired identity \eqref{eq:prepres2} for all $u$ and all $\vec p(u)$. 
\end{proof}
\subsection{Proof of Lemma \ref{J=0 irredu}}
Once the representation in terms of the MPP is established we turn to the proof of Lemma \ref{J=0 irredu}. To prove \eqref{eq:toprotv}, we observe that for any $u_0<u$, using Lemma \ref{lem:repres}, Jensen's inequality and the fact that $\{H\le u_0\}$ is measurable with respect to $\cC^{\le u_0}:=(\cC^{s}, s\le u_0)$, one has
\begin{align}\label{eq:toprotv2}
 &   \|\Phi_u^0(p_1,\dots,p_{2^u}) - \pi\|_{\mathrm{TV}} \leq \bbP(H>u_0) + \nonumber\\ &
\qquad +\,    \bbE\left[\left\|\bbE\left[\Psi(p_1,\dots,p_{2^u},\cC^u)\tc \cC^{\le u_0}\right] - \pi\right\|_{\mathrm{TV}};H\le u_0\right]\,.
\end{align}
We take $u_0=u_0(\e,n)$ such that $n\,e^{-u_0/2n}\le \e/2$, so that by \eqref{coupestimateH} the first term above is at most $\e/2$.
To estimate the second term, the key observation is that conditionally on $\cC^{\le u_0}$,  on the event $H\le u_0$, one has the non-homogeneous product of Bernoulli measures
\begin{align}\label{eq:toprotv3}
 \bbE\left[\Psi(p_1,\dots,p_{2^u},\cC^u)\tc \cC^{\le u_0}\right] =\textstyle{\bigotimes_{j=1}^n}{\rm Be}(\xi_j)\,,
\end{align}
where, for each $j\in[n]$, the parameter $\xi_j=\xi_j(p_1,\dots,p_{2^u},\cC^{\le u_0})$ is obtained as follows. Let $C_{i_*(j)}^u=(\{j\},x_*(j))$ denote the unique marked set  with spatial part given by $\{j\}$, and let $W_j=p_{i_*(j)}(\si_{x_*(j)}=+1)$ denote the probability of the event $\si_{x_*(j)}=+1$, when $\si$ is distributed according to $p_{i_*(j)}$. Then 
\[
\xi_j=\bbE\left[W_j\tc\cC^{\le u_0}\right]\,.
\]
Since the total variation distance between two Bernoulli variables is the distance between the parameters, one has
\[
\left\|\bbE\left[\Psi(p_1,\dots,p_{2^u},\cC^u)\tc \cC^{\le u_0}\right] - \pi
\right\|_{\mathrm{TV}}
\le n\,\max_j |\xi_j - \a_j|\,.
\]
An important feature is that the random site $x_*(j)$, conditionally on $\cC^{\le u_0}$,  on the event $H\le u_0$, has the distribution of $Z^j(u-u_*(j))$, where $u_*(j)\le u_0$ denotes the first time the site $j$ receives a nonzero mark in the MPP, and  $Z^j(k)$ stands for the distribution after $k$ steps of the Markov chain with transition $\widehat\cK:=\frac12 I + \frac12 \bar\cK$, with initial state $j$. On the other hand we note that the parameter $\a_j$, by our assumption $m_\cA(p_i)=m$ for all $p_i$, satisfies $\a_j=\bbE[p_{i}(\si_{y_*(j)}=+1)]$, for any $i=1,\dots,2^u$, where $y_*(j)$ is distributed uniformly in the irreducible component $A(j)\in\cA$ containing $j$. Call $u(j)$ the uniform distribution over $A(j)$. As a consequence, we may estimate the difference $|\xi_j - \a_j|$ by using the optimal coupling between the chain with transition $\widehat \cK$ and the uniform distribution over $A(j)$.  Since $u_*(j)\le u_0$, we can estimate
\begin{align}\label{eq:toprotv4}
 \max_j |\xi_j - \a_j|
 \le \max_j \|\widehat\cK^{u-u_0}(j,\cdot) - u(j)\|_{\rm TV}\,.
\end{align}
By elementary mixing time theory, for any $\e>0$ there exists an integer $u>u_0$ such that the above is at most $\e/2$. Together with \eqref{eq:toprotv2} this concludes the proof.

\section{Relative entropy decay for the particle system}\label{sec:MLSI}
This section is concerned with the proof of Theorem \ref{th:MLSI}.  We start with a version of that theorem that covers the case of the mean field exchange dynamics in Example \ref{ex:mf}, that is for the kernel $\cK(\ell,n)\equiv1/n$, and then turn to the proof for the general multi-component case in Definition \ref{ex:mcmf}. A key intermediate step in these proofs will be a modified log-Sobolev inequality for a version of the so-called Down-Up walk for the canonical Ising measure.  

\subsection{Mean field exchange dynamics}
Here we state the main result for the kernel $\cK(\ell,n)\equiv1/n$. In order to prepare for later applications to multi-component systems we need to extend the setup from Section \ref{subsec:kac} to include arbitrary external fields. Thus, for $h\in\bbR^{Nn}$, we write $h=(h(i),i\in[N])$, and $h(i)\in\bbR^n$ for all $i\in[N]$, and define
\begin{equation}\label{eq:pbm2}
\mu_{N,J,h,\r}:=\otimes_{i=1}^N\mu_{J,h(i)}(\cdot\tc\O_{N,\r}), 
\end{equation} 
the measure obtained by taking products of $\mu_{J,h(i)}$ and then restricting to  $\O_{N,\r}$.
Recall the notation $\si^{i,\ell;j,k}$ for the configuration $\si\in\O^N$ obtained by exchanging the spins $\si_\ell(i)$ and $\si_k(j)$; see \eqref{eq:unde}.  We use the notation 
 \begin{equation}\label{gxk33}
    r_{J,h,i,j}(\ell,k\tc\sigma(i),\sigma(j))=
 \frac{ \mu_{N,J,h,\r}(\si^{i,\ell;j,k})}{\mu_{N,J,h,\r}(\si)+\mu_{N,J,h,\r}(\si^{i,\ell;j,k})}\,.
\end{equation}
Note that, because of the product structure, this quantity depends on $\si $ only through $\si(i)$ and $\si(j)$. Moreover, it coincides with \eqref{hatp} when $h\equiv 0$. We define the Dirichlet form 
\begin{equation}\label{bar-DF}
\bar{\mathcal{E}}_{N, J,h,\r}(F, G) := \frac{1}{2Nn} \sum_{i,j=1}^N \sum_{\ell,k =1}^n \mu_{N,J,h,\r}\left[  r_{J,h,i,j}(\ell,k\tc\sigma(i),\sigma(j)) \nabla^{i,\ell;j,k}F\nabla^{i,\ell;j,k}G \right],
\end{equation}
for functions \(F, G : \O_{N,\r} \to \mathbb{R}\). As usual, we let 
\[
\operatorname{Ent}_{\mu_{N,J,h,\r}}(F)=\mu_{N,J,h,\r}\left[F\log(F/\mu_{N,J,h,\r}[F])\right]
\] denote the entropy of $F\ge 0$ with respect to $\mu_{N,J,h,\r}$.
\begin{theorem}\label{th:MLSI-MF}
Let $h\in\bbR^{Nn}$, let \(J\in\bbR^{n\times n}\) be a nonnegative definite matrix with largest eigenvalue \(\lambda(J) < 1/2\), and let  
\(\overline{J}=\max_{i\in [n]} \sum_{j\in [n]}|J_{i,j}|\), \(\overline{h}=\max_{\ell\in [n], i\in[N]}|h_\ell(i)|\). 
For any $N\ge 2$, and $\r\in \cD_{\cA,N}$, 
\begin{equation}\label{eq:MLSI-MF1}
\bar{\mathcal{E}}_{N, J,h,\r}(F, \log F) \ge \a\operatorname{Ent}_{\mu_{N,J,h,\r}}(F),
\end{equation}
for all $F\ge 0$, with constant  $\a$ such that  
  \begin{equation}\label{eq:MLSI-MF2}
        \alpha \geq \tfrac14(1 - 2\lambda(J))\,e^{-8(\overline{J}+\overline{h})}.
   \end{equation}
\end{theorem}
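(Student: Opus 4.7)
The plan is to implement a stochastic localization argument in the conservative setting, reducing the MLSI for $\mu_{N,J,h,\r}$ to a uniform MLSI for product Bernoulli measures on the magnetization slice. The three ingredients are: a Hubbard--Stratonovich representation of the Ising measure as a mixture of product measures with random tilted fields; an MLSI for non-homogeneous product Bernoulli measures conditioned on a linear count constraint, via log-concave generating polynomials and the Down-Up walk on a partition matroid (as in \cite{CGM, EntInd22, caputo2025factorizations}); and a Hessian/covariance estimate controlling the outer mixture contribution, in which the spectral assumption $\lambda(J)<1/2$ provides the quantitative gain.

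For the first ingredient, since $J$ is nonnegative definite, applying Hubbard--Stratonovich separately at each particle, with $Y=(Y(i))_{i\in[N]}$ i.i.d.\ centered Gaussian with covariance $J$, one obtains
\begin{equation}\label{eq:HSrepa}
\mu_{N,J,h,\r}(\sigma) \,=\, \int \nu(dY)\, \pi_{h+Y,\r}(\sigma), \qquad \pi_{h+Y,\r}:=\textstyle{\bigotimes_{i=1}^N} \pi^{h(i)+Y(i)}(\,\cdot\,\tc \O_{N,\r}),
\end{equation}
where $\pi^g$ denotes the product Bernoulli on $\{-1,+1\}^n$ with external field $g$ and $\nu$ is the appropriate density of $Y$ under the mixture. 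For each realization of $Y$, the measure $\pi_{h+Y,\r}$ is a non-homogeneous product Bernoulli on the linear constraint surface of \eqref{eq:AmagO}, equivalently the uniform distribution on the bases of a partition matroid with a product weighting. The MLSI for the associated Down-Up walk from \cite{CGM,EntInd22}, in the entropy-factorization formulation of \cite{caputo2025factorizations}, then yields
\begin{equation}\label{eq:prodMLSIa}
\operatorname{Ent}_{\pi_{h+Y,\r}}(F) \,\le\, c_0 \cdot \bar{\cE}_{N,0,h+Y,\r}(F,\log F),
\end{equation}
with $c_0$ an absolute constant, uniformly in $N$, $\r$, $h$, and $Y$.

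To combine these ingredients I would run the stochastic localization flow $(\mu_t)_{t\ge 0}$ starting at $\mu_0=\mu_{N,J,h,\r}$, driven by a Brownian motion with covariance $J$ projected onto the tangent space of $\O_{N,\r}$, so that $\mu_t$ remains a mixture of the form \eqref{eq:HSrepa} throughout the evolution and localizes as $t\to\infty$ onto a random product Bernoulli. The standard entropy identity
\begin{equation}\label{eq:SLId}
\operatorname{Ent}_{\mu_{N,J,h,\r}}(F) \,=\, \lim_{t\to\infty}\E\!\left[\operatorname{Ent}_{\mu_t}(F)\right] \,+\, \int_0^\infty \E\!\left[\Phi_t\right]dt
\end{equation}
combines \eqref{eq:prodMLSIa} in the limit with a Hessian bound for the outer derivative $\Phi_t$; the condition $\lambda(J)<1/2$ is exactly what ensures this Hessian estimate introduces at most a multiplicative factor $(1-2\lambda(J))^{-1}$, producing the $(1-2\lambda(J))$ prefactor in \eqref{eq:MLSI-MF2}. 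The exponential factor $e^{-8(\bar J+\bar h)}$ arises from the pointwise lower bound $r_{J,h,i,j}(\ell,k\tc\sigma(i),\sigma(j))\ge \tfrac12 e^{-c(\bar J+\bar h)}$, which follows because a single pair exchange changes the log-density of $\mu_{N,J,h,\r}$ by at most a multiple of $\bar J+\bar h$, and which converts the Bernoulli heat-bath Dirichlet form in \eqref{eq:prodMLSIa} into $\bar{\cE}_{N,J,h,\r}$.

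The main obstacle is the conservative nature of the measure: the linear constraint defining $\O_{N,\r}$ couples all spins globally, so the naive Hubbard--Stratonovich flow does not preserve the state space. The Gaussian increments must be projected onto the constraint hyperplane, and one then has to verify that the restricted covariance continues to dominate $J$ on the relevant subspace in order to preserve the Hessian bound. This is precisely the conservative adaptation of stochastic localization developed in \cite{bauerschmidt2024kawasaki} for Kawasaki dynamics, which the present argument would tailor to the multi-particle Ising setting.
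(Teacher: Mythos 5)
Your high-level plan follows the same broad outline as the paper: localize to product Bernoulli measures on the magnetization slice, invoke strong Rayleigh / log-concave-generating-polynomial results for the localized measures, pay a covariance factor under the spectral assumption to transfer back, and pay a rate factor $e^{-c(\overline{J}+\overline{h})}$ when passing to the exchange Dirichlet form. However, there is a genuine gap in \eqref{eq:prodMLSIa}.

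The results of \cite{CGM,EntInd22} give an MLSI with absolute constant for the Down--Up walk of a strong Rayleigh measure, i.e.\ an inequality of the form $\ent_{\pi_{h+Y,\r}}F\le \cE^{\rm du}(F,\log F)$ where $\cE^{\rm du}$ uses the (normalized) resampling rates as in \eqref{RatesDU}. Converting $\cE^{\rm du}$ into the Metropolis-type exchange Dirichlet form $\bar\cE_{N,0,h+Y,\r}$ requires the pointwise rate comparison $c^{\rm du}(\eta,\eta^{ij}) \le \tfrac{4}{Nn}\,e^{c\,\max_\ell|h_\ell+Y_\ell|}\,r_{0,h+Y}(\eta,\eta^{ij})$, analogous to \eqref{RatesDU2}; the exponent involves the realized tilt $Y$, and $Y$ is an unbounded Gaussian under the mixing measure. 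So the constant $c_0$ in \eqref{eq:prodMLSIa} is \emph{not} uniform in $Y$, and the localized-then-compare chain loses an unbounded random factor at its first step. Your subsequent step, replacing a single $r_{0,h+Y}$ by $r_{J,h}$ via its lower bound, would likewise not repair this because the entire passage from the Down-Up form to $\bar\cE_{N,0,h+Y,\r}$ already carries the $Y$-dependence.

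The paper resolves exactly this issue by postponing any rate comparison until after the factorization has been transferred back to the original measure. Concretely: Proposition \ref{th:tenso} gives $\ent_{\varrho_1}F\le\sum_i\varrho_1[\ent_{\varrho_{1,i}}F]$ \emph{as a sum of conditional entropies, not a rate-weighted form}; Lemma \ref{lem:convex} then uses the joint convexity of $(x,y)\mapsto x\log(x/y)$ to pass from $\bbE[\varrho_1[\ent_{\varrho_{1,i}}F]]$ to $\nu[\ent_{\nu_i}F]$ with \emph{no} constant at all; Proposition \ref{prop:ent_stab} gives $\ent_\nu F\le(1-2\l(J))^{-1}\bbE[\ent_{\varrho_1}F]$; and only at the very end do \eqref{eq:jensenCov}, \eqref{Ddumu} and the rate bound \eqref{RatesDU2} convert $\sum_i\nu[\ent_{\nu_i}F]$ into $\bar\cE_{N,J,h,\r}(F,\log F)$. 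Because the last step is performed at the level of $\mu_{N,J,h,\r}$, the only fields involved are $J$ and $h$, which are bounded by $\overline J$ and $\overline h$; the unbounded $Y$ never enters. This reordering—convexity-transfer of the entropy factorization before any rate comparison—is the missing ingredient in your proposal.

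Two smaller points. First, the additive decomposition in \eqref{eq:SLId} gives $\ent_\mu F=\bbE[\ent_{\mu_\infty}F]+(\text{nonnegative})$, which on its own goes in the wrong direction; what is actually used is the multiplicative entropic-stability bound of Proposition \ref{prop:ent_stab}, which produces the $(1-2\l(J))^{-1}$ factor directly. Second, in the paper's scheme \eqref{stocloc definition} the interaction is annealed away at the finite time $t=1$, after which $\varrho_1$ is a tilted product Bernoulli; if one instead ran a linear-tilt localization with $J$ held fixed out to $t\to\infty$, the measure would collapse to a point mass rather than a product Bernoulli.
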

We note that Theorem \ref{th:MLSI-MF} implies Theorem \ref{th:MLSI} in the special case $\cK(\ell,n)\equiv1/n$, since $(1-2\lambda(J))\le 1$ and for this choice of kernel, taking $h\equiv 0$,  one has \[\bar{\mathcal{E}}_{N, J,0,\r}(F, G)=n\mathcal{E}_{N, J,\r}(F, G)\,,\]
where $ \mathcal{E}_{N, J,\r}(F, G)$ is given in \eqref{Dirichlet form} with $\cK(\ell,n)\equiv1/n$.
The proof of Theorem \ref{th:MLSI-MF} will be derived as a consequence of the following more general estimate for mean field exchange systems.

\subsection{Down-Up Walk for Conservative Particle Systems}
We introduce some new notation. Consider a generic Ising spin system, with $L$ spins, for some \(L \in \mathbb{N}\), with interaction given by an $L\times L$ symmetric matrix $\L$ and with external field $w\in\bbR^L$, that is
 \begin{equation}\label{L-Ising-Measure}
\mu_{\Lambda,w}(\eta) =\frac1{Z_{\L,w}} \exp\left( \frac{1}{2} \langle \eta, \Lambda \eta \rangle + \langle \eta, w \rangle \right),\qquad \eta\in\{-1,1\}^L\,,
\end{equation}
where $ \langle \eta, \Lambda \eta \rangle=\sum_{i,j=1}^L\eta_i\L_{i,j}\eta_j$ and $\langle \eta, w \rangle=\sum_{i=1}^Lw_i\eta_i$. Given an integer $M\in\{-L,\dots,L\}$, consider the state space
\begin{equation}\label{OLM}
\Omega_{L,M} = \left\{ \eta\in\{-1,1\}^L, \  \textstyle{\sum_{i=1}^L} \eta_i = M\right\}.
\end{equation}
We will assume that $L+M$ is even, so that $\O_{L,M}$ is not empty. 
We view a configuration $\eta\in\O_{L,M}$ as composed of $K$ points in the set $[L]=\{1,\dots,L\}$, corresponding to the sites $i$  where $\eta_i=+1$. Thus, $\eta$ is a configuration of $K=(L+M)/2$ elements from $[L]$, which we view as sites occupied by ``balls". The remaining sites are such that $\eta_i=-1$ and will be seen as empty or ``holes". The Down-Up Walk on $\Omega_{L,M}$  for the Ising system $\mu_{\Lambda,w}$ is the continuous-time Markov  chain where a ball at site $i$, with rate 1 is moved to a hole at $j$, chosen among all possible vacant locations (including going back to $i$ itself) with probability proportional to $\mu(\eta^{i,j})$, where \(\eta \to \eta^{i,j}\) is the configuration obtained by swapping the spins values at sites \(i\) and \(j\). Using the notation $\nabla^{i,j}F(\eta)=F(\eta^{i,j}) - F(\eta)$, the infinitesimal generator of the process is 
\begin{equation}\label{GenDU}
\mathcal{L}^{\rm du}F(\eta) = \sum_{i,j=1}^L c^{\rm du}(\eta,\eta^{i,j}) \nabla^{i,j}F(\eta),
\end{equation}
with $F:\Omega_{L,M}\mapsto \bbR$, and \(c^{\rm du}(\eta,\eta^{i,j})\) is the transition rate 
\begin{equation}\label{RatesDU}
c^{\rm du}(\eta,\eta^{i,j}) = \mathbf{1}(\eta_i = +1,\eta_j = -1)
\frac{\mu_{\L,w}(\eta^{i,j})}{\sum_{k \in U_i(\eta)} \mu_{\L,w}(\eta^{ik})},
\end{equation}
where \(U_i(\eta) = \{j \in [L] : \eta_j = -1\} \cup \{i\}\) denotes the set of available sites. This generator defines a conservative dynamics, reversible with respect to \(\mu_{\L,w}\).
It is not hard to check that, for any $M$ such that $L+M$ is even, the process is irreducible in the state space $\O_{L,M}$, and the restricted measure 
\begin{equation}\label{RestrDU}
\nu_{L,M}= \mu_{\L,w}\left(\cdot\tc\O_{L,M}\right)
\end{equation}
is reversible, and thus the unique invariant measure. Let \(\mathcal{E}^{\rm du}(F,G)\) denote the associated Dirichlet form,
\begin{equation}\label{DirDU}
\mathcal{E}^{\rm du}(F,G) =  \frac{1}{2} \sum_{i,j=1}^L \nu_{L,M}
\left[ c^{\rm du}(\eta,\eta^{i,j})\nabla^{i,j}F(\eta)\nabla^{i,j}G(\eta) \right]\,, 
\end{equation}
for \(F,G : \Omega_{L,M} \to \mathbb{R}\).
Let also $\ent_{\nu_{L,M}}(F)$ denote the entropy with respect to the canonical Ising measure $\nu_{L,M}$.
  The key to prove Theorem \ref{th:MLSI-MF} will be the following MLSI for the Down-Up walk. 

\begin{theorem}[MLSI for the Down-Up walk]\label{MLSI DOWN UP}
Suppose that \(\Lambda\) is a nonnegative definite matrix with largest eigenvalue \(\lambda(\Lambda) < 1/2\). Then 
for all values of $L,M$, with $L+M$ even, for all $w\in\bbR^L$,
\[
\mathcal{E}^{\rm du}(F, \log F)\ge (1 - 2\lambda(\Lambda))\, \mathrm{Ent}_{\nu_{L,M}}(F) \,,
\]
for every \(F : \Omega_{L,M} \to \bbR_+\).
\end{theorem}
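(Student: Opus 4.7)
The plan is to reduce the inequality to the case $\Lambda=0$ of independent spins (with arbitrary external field, constrained to fixed magnetization) via a stochastic-localization / Hubbard--Stratonovich decomposition of $\mu_{\Lambda,w}$, and then to combine that base case with a Dirichlet-form comparison that produces the factor $1-2\lambda(\Lambda)$.

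For the base case $\Lambda=0$, after identifying $\eta\in\Omega_{L,M}$ with its set of plus spins $S\subset [L]$ of size $K=(L+M)/2$, the restricted measure $\nu^0_{L,M,w}:=\mu_{0,w}(\cdot\tc\Omega_{L,M})$ is the weighted distribution on bases of the uniform rank-$K$ matroid on $[L]$, with weights $\prod_{i\in S}e^{2w_i}$. Its generating polynomial is homogeneous, multi-affine, and strongly log-concave, and the Down-Up walk of \eqref{GenDU}--\eqref{RatesDU} applied to $\nu^0_{L,M,w}$ is, up to an overall factor, the basis-exchange walk for this matroid. By the entropy-contraction / MLSI results of Cryan--Guo--Mousa and Anari--Liu--Oveis Gharan--Vinzant for strongly log-concave distributions, the Down-Up walk for $\nu^0_{L,M,w}$ then satisfies an MLSI with constant one, uniformly in $L$, $M$, and $w\in\mathbb{R}^L$. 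This is the \emph{Down-Up walk on matroid bases} referenced in the introduction.

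For general $\Lambda\succeq 0$, the Hubbard--Stratonovich identity
\[
\exp\bigl(\tfrac12\langle\eta,\Lambda\eta\rangle\bigr)\ =\ \int\exp\bigl(\langle \Lambda^{1/2}g,\eta\rangle\bigr)\,\gamma(dg),
\]
with $\gamma$ the standard Gaussian on $\mathbb{R}^L$, writes $\mu_{\Lambda,w}$ as a Gaussian mixture of product Ising measures with tilted external field $w+\Lambda^{1/2}g$. Conditioning on the slice $\Omega_{L,M}$ gives a disintegration
\[
\nu_{L,M}\ =\ \int \nu^0_{L,M,w+\Lambda^{1/2}g}\,\pi(dg)
\]
for a probability measure $\pi$ on $\mathbb{R}^L$ absolutely continuous with respect to $\gamma$. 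I would then implement the adaptation of stochastic localization to conservative dynamics developed for Kawasaki dynamics in \cite{bauerschmidt2024kawasaki}, combining the convexity of relative entropy under this mixture with a direct pointwise comparison of the Down-Up transition rates of $\mu_{\Lambda,w}$ and of $\mu_{0,w+\Lambda^{1/2}g}$. The spectral assumption $\lambda(\Lambda)<1/2$ enters precisely here: the change in $\tfrac12\langle\eta,\Lambda\eta\rangle$ under an exchange $\eta\mapsto\eta^{i,j}$ is a quadratic form controlled in operator norm by $2\lambda(\Lambda)$, and a Cauchy--Schwarz argument converts this into the announced linear loss $1-2\lambda(\Lambda)$.

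The main obstacle, as already in \cite{bauerschmidt2024kawasaki}, is that standard stochastic localization is formulated on the full Boolean cube $\{-1,+1\}^L$ and does not a priori commute with conditioning on $\Omega_{L,M}$. Resolving this requires the entropy-factorization viewpoint of \cite{caputo2025factorizations} announced in the introduction, which restates MLSI as a factorization identity for the canonical measure and naturally absorbs the magnetization constraint; in that framework the Gaussian tilt and the conditioning become compatible, and the base-case MLSI transfers to $\nu_{L,M}$ with rate $1-2\lambda(\Lambda)$. The uniformity in $w$ of the base-case estimate is essential, since the tilt $w+\Lambda^{1/2}g$ is unbounded.
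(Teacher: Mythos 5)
Your base case is correct and matches the paper: for $\Lambda=0$ the restricted measure is a weighted uniform-matroid basis distribution, strongly Rayleigh and hence strongly log-concave, and the CGM/Anari--Liu--Oveis Gharan--Vinzant entropy results give the MLSI with constant one, uniformly in $L,M,w$. Your identification of the Hubbard--Stratonovich mixture and your remark that the conditioning on $\Omega_{L,M}$ does not interact simply with the naive cube-based localization are also both on target; the paper handles this by defining the localization flow $\varrho_t$ \emph{directly on} $\Omega_{L,M}$, so the constraint is built in from the start.

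The gap is in the step that is supposed to produce the factor $1-2\lambda(\Lambda)$. You propose to combine ``the convexity of relative entropy under this mixture'' with ``a direct pointwise comparison of the Down-Up transition rates'' via Cauchy--Schwarz. Neither ingredient works as stated. First, for a mixture $\nu=\int \nu_g\,\pi(dg)$ the entropy chain rule gives $\operatorname{Ent}_{\nu}F \geq \int \operatorname{Ent}_{\nu_g}F\,\pi(dg)$ (the excess being the entropy of the marginal averages), which is the \emph{wrong} direction: you need a \emph{lower} bound on $\int\operatorname{Ent}_{\nu_g}F$ by $\operatorname{Ent}_{\nu}F$, and plain convexity does not give it. Second, the change of $\tfrac12\langle\eta,\Lambda\eta\rangle$ under an exchange $\eta\mapsto\eta^{i,j}$ contains linear terms $(\Lambda\eta)_i-(\Lambda\eta)_j$, which are not bounded by $\lambda(\Lambda)$ uniformly in $\eta$; a pointwise rate comparison between $\mu_{\Lambda,w}$ and the tilted product measure therefore does not yield a uniform multiplicative constant, and in particular cannot be the source of $1-2\lambda(\Lambda)$.

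What the paper actually needs here is an \emph{entropic stability} estimate of the Chen--Eldan type: along the localization flow $\varrho_t$, if one can show $\lambda(\operatorname{Cov}[\mathcal{T}_v\varrho_t])\le\kappa(t)$ uniformly over all external tilts $v$, then $\mathbb{E}[\operatorname{Ent}_{\varrho_1}F]\ge\exp(-\lambda(\Lambda)\int_0^1\kappa)\,\operatorname{Ent}_{\nu}F$, which is precisely the missing lower bound. The covariance estimate is where the Hubbard--Stratonovich decomposition is actually used, together with negative dependence of Strong Rayleigh measures and the Brascamp--Lieb inequality for the Gaussian-mixture weight; integrating $\kappa(s)=2/(1-2(1-s)\lambda(\Lambda))$ yields exactly $1-2\lambda(\Lambda)$. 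In addition, passing from the averaged localized conditional entropies to those of $\nu$ requires a convexity lemma for the conditional quantities (using joint convexity of $(x,y)\mapsto x\log(x/y)$), not the convexity of the full entropy functional. These three steps---entropic stability, the tilt-uniform covariance bound, and the conditional-entropy convexity lemma---constitute the technical core of the proof and are absent from your outline; citing \cite{caputo2025factorizations} as ``resolving'' the issue does not substitute for them, especially since your intermediate steps would not feed into that framework in the form you describe.
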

It is crucial that the estimate is uniform over the parameters $L,M$ and the external fields $w\in\bbR^L$. The above theorem can be proven using the arguments in Theorem 2.2 and Lemma 2.7 in \cite{bauerschmidt2024kawasaki}. Moreover, following \cite{bauerschmidt2024kawasaki} it can be shown that if the largest eigenvalue of $\L$ is associated to a constant eigenvector, then one can impose the condition \(\lambda(\Lambda) < 1/2\) on the second largest eigenvalue. This is natural, in light of the global constraint $\sum_i\eta_i=M$. Indeed, if e.g.\ $\L_{i,j}\equiv c$, then the canonical measure $\nu_{L,M}$ becomes uniform and one might as well take $\L=0$. However, for the applications to our setting, the removal of the eigenvalue associated to constant eigenfunction will not be relevant. Below, we prove Theorem \ref{MLSI DOWN UP} with a slightly different argument, which will allow us to introduce the relevant tools to prove the more general statements for multi-component mean field exchange models, which are not covered by the results in  \cite{bauerschmidt2024kawasaki}. Our proof will follow the factorization technique developed in \cite{caputo2025factorizations}, which in turn was based on the stochastic localization approach from \cite{chen2022localization}. 
The proof of Theorem \ref{MLSI DOWN UP} will be given in the following subsections. Here we observe that it implies Theorem \ref{th:MLSI-MF}.

\begin{proof}[Proof of Theorem \ref{th:MLSI-MF} assuming Theorem \ref{MLSI DOWN UP}]
Let \(L = Nn\), where \(N\) is the number of particles and \(n\) the number of spins per particle. We use the correspondence
\begin{equation}\label{eq:corres}
\eta \in \{-1,1\}^{Nn} \longrightarrow \sigma = (\sigma(i), i = 1, \dots, N), \quad \sigma(i) = (\eta_u, \,u = n(i-1)+1, \dots, ni),
\end{equation}
that is the spins in the block of sites \(\{(i-1)n+1,\dots,ni\}\) are associated with the $i$-th particle \(\si(i)=(\si_1(i),\dots,\si_n(i))\). 
The $L\times L$ matrix $\L$ and the $L$-vector $w$ are taken such that  
\begin{equation}\label{def JN}
    \langle \eta, \L \eta \rangle = \textstyle{\sum_{i=1}^N} \langle \si(i), J \si(i) \rangle, \qquad  \langle \eta, w \rangle = \textstyle{\sum_{i=1}^N}\langle \si(i), h (i)\rangle
\end{equation}
where the scalar products on the left hand side are over $\bbR^L$ while each of the scalar products on the right hand side is over $\bbR^n$. In this way, taking $M=(2\r-1)Nn$, the canonical measure $\nu_{L,M}$ in \eqref{RestrDU} coincides with the probability measure $\mu_{N,J,h,\r}$. Since $\L$ consists of $N$ blocks given by the $n\times n$ matrix $J$, one has $\l(\L)=\l(J)$.  
Therefore, to prove \eqref{eq:MLSI-MF1} it remains to establish the comparison
\begin{equation}\label{eq:MLSI-MF11}
\bar{\mathcal{E}}_{N, J,h,\r}(F, \log F) \ge \tfrac14\,e^{-8(\overline{J}+\overline{h})}\mathcal{E}^{\rm du}(F, \log F),
\end{equation}
for all nonnegative functions $F$. To prove this, observe that, if $M\le 0$, that is $\r\le1/2$, then $|U_i(\eta)|\ge L/2=Nn/2$, for all $\eta\in\O_{L,M}$, and therefore
\begin{align}\label{RatesDU2}
c^{\rm du}(\eta,\eta^{i,j}) &\le \frac4{Nn}\,e^{8(\overline{J}+\overline{h})}\frac{\mu_{\L,w}(\eta^{i,j})}{\mu_{\L,w}(\eta)+\mu_{\L,w}(\eta^{i,j})}\nonumber\\
&
=  \frac4{Nn}\,e^{8(\overline{J}+\overline{h})}\,r_{J,h,C_i,C_j}(\ell,k\tc\si(C_i),\si(C_j))\,,
\end{align}
where $C_i,C_j\in[N]$, and $\ell,k\in[n]$,  denote the labels such that $\si_\ell(C_i)=\eta_i$ and $\si_k(C_j)=\eta_j$ in the correspondence \eqref{eq:corres}. Using this pointwise estimate in \eqref{DirDU} we obtain \eqref{eq:MLSI-MF11}. 
This ends the proof in the case $M\le 0$. The case $M\ge 0$ can be handled in the same way with the change of variable $\eta\mapsto -\eta$. Indeed, this global flip changes the measure from $\mu_{N,J,h,\r}$ to $\mu_{N,J,-h,1-\r}$. Thus we may apply the previous argument to the new variables.
This ends the proof of Theorem \ref{th:MLSI-MF}.
\end{proof}

\subsection{Proof of Theorem \ref{MLSI DOWN UP}}
Let \((W_t)_{t \geq 0}\) be a standard Brownian motion in $\bbR^L$,  adapted to a filtration \((\mathcal{F}_t)_{t \geq 0}\), and set $\tilde W_t = \L^{1/2}W_t$. The stochastic localization process for the canonical Ising measure $\nu_{L,M}$ is given by 
\begin{equation}\label{stocloc definition}
    \varrho_t(\eta) =\frac1{Z_t} \exp\left( -\frac{t - 1}{2} \langle \eta, \Lambda \eta \rangle + \langle y_t + w, \eta \rangle \right) \mathbf{1}(\eta \in \Omega_{L,M}),\quad t\ge 0\,,
\end{equation}
where $Z_t$ is the normalization so that $\varrho_t\in\cP(\O_{L,M})$, and  the random field \(y_t \in \mathbb{R}^L, t\ge 0\) evolves according to the stochastic differential equation
\begin{equation}\label{SDE yt}
    dy_t =d\tilde W_t +  \Lambda m(y_t)\,dt, \qquad y_0 = 0,
\end{equation}
with $m(y_t)$ given by the mean vector $m(y_t) := {\varrho_t}[\eta]$, that is for all $i\in[L]$,
\begin{equation}
    [m(y_t)]_i = {\varrho_t}[\eta_i] =\textstyle{ \sum_{\eta\in\O_{L,M}}}\eta_i\varrho_t(\eta).
\end{equation}
That the process is well defined can be checked by standard methods. Moreover,  $(\varrho_t, t\ge 0)$ is an $\{\cF_t\}$-martingale and therefore $\nu_{L,M} = \bbE[\varrho_t]$ for all $t\ge 0$, where $\bbE$ denotes the expectation with respect to the stochastic localization process. We refer to \cite{chen2022localization}; see also Lemma 3.1 in \cite{caputo2025factorizations}. 

At time \(t = 1\), the measure \eqref{stocloc definition} is a random product distribution restricted to \(\Omega_{L,M}\), 
\begin{equation}\label{independent product}
    \varrho_1(\eta) \propto \prod_{i=1}^L \exp\left( y_{1,i}\eta_i + w_i\eta_i \right) \, \mathbf{1}(\eta \in \Omega_{L,M}),
\end{equation}
where $y_{1,i}$ denotes the $i$-th component of the vector $y_1$.
That is, $\varrho_1$ is a Bernoulli product measure conditioned on having total magnetization $M$. Such measures are known to satisfy the Strong Rayleigh property; see \cite{borcea2009negative}. This implies strong log-concavity of its generating polynomial, which in turn yields a modified logarithmic Sobolev inequality (MLSI) for the associated Down-Up walk \cite{CGM}; see also \cite{EntInd22,bauerschmidt2024kawasaki}. We shall actually establish a stronger estimate,  namely entropy factorization. This allows us to use the arguments in \cite{caputo2025factorizations} which yield a simple proof of Theorem \ref{MLSI DOWN UP}. 

To introduce the factorization estimate, we formulate the Down-Up walk for an arbitrary measure $\mu\in\cP(\O_{L,M})$ as follows.
As above, we say that there is a ball at $j\in[L]$ if $\eta_j=+1$. Thus, every $\eta\in\O_{L,M}$ has $K:=(L+M)/2$ balls. According to the distribution $\mu$, balls are not distinguished. However, we artificially distinguish them by taking a uniformly random labeling of the $K$ balls, and write $X_k\in[L]$ for the position of the $k$-th ball. With slight abuse of notation we call again $\mu$ the induced distribution of the vector $X:=(X_1,\dots,X_K)$.
Let $\mu_i=\mu(\cdot\tc X_j,\,j\neq i)$ denote the distribution of $X_i$ conditionally on the position of all remaining balls. Now, it is not hard to see that the Down-Up walk for the original measure $\mu$, defined by the rates \eqref{RatesDU} with $\mu$ in place of $\mu_{\L,w}$, coincides with the Markov chain where with rate $1$ independently each variable $X_i$ is updated by replacing it with a sample from $\mu_i$, provided we observe the chain on functions that are symmetric under permutations of the ball labels. Thus, letting $\cS$ be the set of  symmetric functions $X\mapsto F(X)$, the generator of the chain becomes
 \begin{equation}\label{Ldumu}
    \cL^{\rm du} F(X) = \sum_{i=1}^K (\mu_i[F](X) - F(X)),
\end{equation}
for any $F\in\cS$, 
where $\mu_i[F](X)=\mu(F\tc X_j,\,j\neq i)$ is the conditional expectation of $F$ given all $X_j,\,j\neq i$. It follows 
that the Dirichlet form is given by 
 \begin{equation}\label{Ddumu}
    \cE^{\rm du}(F,G) = \sum_{i=1}^K \mu\left[\cov_{\mu_i}(F,G)\right],
\end{equation}
for any $F,G\in\cS$, where $\cov_{\mu_i}(F,G)=\mu_i[FG] - \mu_i[F]\mu_i[G]$ denotes the covariance with respect to $\mu_i$. 
Moreover, 
let $\ent_{\mu_i}(F)=\mu_i[F\log F]-\mu_i[F]\log\mu_i[F]$ denote the entropy functional with respect to $\mu_i$. Note that this is a function of the variables $X_j,\,j\neq i$ and that for any realization of these variables Jensen's inequality implies 
\begin{equation}\label{eq:jensenCov}
\mathrm{Ent}_{\mu_i}F\le \cov_{\mu_i}(F,\log F)\,,
\end{equation}
for all $F\ge 0$. In light of \eqref{Ddumu}-\eqref{eq:jensenCov} the following factorization property implies the MLSI for $\mu$ with constant $1$. We say that $\mu\in\cP(\O_{L,M})$ is strongly log-concave
if its generating polynomial is strongly log-concave; see \cite{CGM}. 
 \begin{proposition}\label{th:tenso}
 Suppose $\mu\in\cP(\O_{L,M})$ is strongly log-concave. Then, for any $F\in\cS$, 
   \begin{equation}\label{eq:tenso}
 \ent_\mu F\le \sum_{i=1}^K \mu\left[\ent_{\mu_i}F\right].
\end{equation}
 \end{proposition}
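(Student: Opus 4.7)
The plan is to recognize Proposition~\ref{th:tenso} as the \emph{approximate tensorization of entropy with constant~$1$} for the down-up walk on a strongly log-concave distribution, a known result due to Cryan--Guo--Mousa~\cite{CGM} and Anari--Jain--Koehler--Pham--Vuong~\cite{EntInd22}, which I would invoke after translating the setup into the language of log-concave polynomials. Identify $\mu\in\cP(\O_{L,M})$ with a distribution on $\binom{[L]}{K}$, $K=(L+M)/2$, via $\eta\leftrightarrow S_\eta:=\{i:\eta_i=+1\}$. Its generating polynomial
\[
g_\mu(z_1,\ldots,z_L) \;=\; \sum_{S\in\binom{[L]}{K}}\mu(S)\prod_{i\in S}z_i
\]
is homogeneous of degree $K$ with nonnegative coefficients, and by hypothesis strongly log-concave. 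Under the random labeling $X=(X_1,\ldots,X_K)$, the conditional measures $\mu_i$ appearing in \eqref{Ldumu} are exactly the single-coordinate conditionals of the induced symmetric distribution on $[L]^K$.

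With this identification, the bound \eqref{eq:tenso} is precisely the statement that $\mu$ is \emph{entropically independent with constant $1$}, which is the content of the main result of \cite{EntInd22} (and implicit in \cite{CGM}) for strongly log-concave distributions. Briefly, the argument proceeds by induction on $K$. For $K=1$ the inequality is trivial. For $K\ge 2$, applying the chain rule for entropy to the splitting $X=(X_1,(X_2,\ldots,X_K))$ gives
\[
\ent_\mu F \;=\; \mu\bigl[\,\ent_{\mu(\cdot\mid X_1)}F\,\bigr] \;+\; \ent_{\pi_1}\bigl(\mu[F\mid X_1]\bigr),
\]
where $\pi_1$ denotes the marginal of $X_1$ under $\mu$. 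The conditional distribution $\mu(\cdot\mid X_1)$ has generating polynomial proportional to $\partial_{z_{X_1}}g_\mu$, which remains strongly log-concave by stability under partial differentiation~\cite{EntInd22}; combined with the inductive hypothesis and the Markov property of the single-coordinate conditionals, this yields
\[
\mu\bigl[\ent_{\mu(\cdot\mid X_1)}F\bigr] \;\le\; \sum_{i=2}^K \mu\bigl[\ent_{\mu_i}F\bigr].
\]
By the symmetry of the ball labels, it then suffices to close the induction via the marginal-entropy bound
\[
\ent_{\pi_1}\bigl(\mu[F\mid X_1]\bigr) \;\le\; \mu\bigl[\,\ent_{\mu_1}F\,\bigr].
\]

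The main obstacle is precisely this last bound, which lies at the heart of the theory of Lorentzian/strongly log-concave polynomials and cannot be established by elementary means. In the entropic independence framework of \cite{EntInd22} it follows from a careful analysis of $g_\mu$ under polarization and partial evaluation, exploiting the fact that $\Hess\log g_\mu$ has at most one positive eigenvalue on the positive orthant. For the purposes of the present work, Proposition~\ref{th:tenso} is invoked as a black box from this theory; it serves as the starting point of the factorization machinery of Section~\ref{sec:MLSI}, which is then propagated to the multi-canonical Ising measure via stochastic localization.
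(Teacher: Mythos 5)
Your proposal correctly recognizes Proposition~\ref{th:tenso} as a black-box consequence of the strongly log-concave machinery of \cite{CGM,EntInd22}, which is the same spirit as the paper, and the inductive skeleton (chain rule over $X_1$, stability of strong log-concavity under partial differentiation, close by a one-variable estimate) is structurally sound. The paper takes a slightly different route: it cites the \emph{subadditivity} (equivalently, one-step down entropy contraction)
\begin{equation}
\ent_\mu F \ge \sum_{i=1}^K \ent_\mu \mu[F\mid X_i]
\end{equation}
as the result of \cite[Lemma 11]{CGM} and \cite[Theorem 5]{EntInd22}, notes that it persists under conditioning, and then invokes a standard recursion (referencing \cite[Sec.~4.1]{EntLecNotes}) to pass from subadditivity at all scales to the factorization \eqref{eq:tenso}.

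The genuine gap in your sketch is the attribution of the ``marginal-entropy bound'' $\ent_{\pi_1}(\mu[F\mid X_1]) \le \mu[\ent_{\mu_1}F]$: this is not what \cite{EntInd22} states or proves, and it cannot be lifted as a stand-alone black box independently of the induction you are running. What the references prove is the subadditivity above; your marginal-entropy bound actually \emph{follows} from it together with the inductive hypothesis, by a two-line calculation. Indeed, set $G_{\{1\}}=\ent_{\pi_1}(\mu[F\mid X_1])$, $H_{\{1\}}=\mu[\ent_{\mu(\cdot\mid X_1)}F]$ and $H_{V\setminus\{1\}}=\mu[\ent_{\mu_1}F]$. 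By symmetry of the ball labels, subadditivity gives $(K-1)(H_{\{1\}}+G_{\{1\}})\le K\,H_{\{1\}}$, i.e.\ $G_{\{1\}}\le H_{\{1\}}/(K-1)$; your own Step 2 (factorization for the conditional measure) gives $H_{\{1\}}\le (K-1)\,H_{V\setminus\{1\}}$; combining these two yields the marginal-entropy bound \emph{and} directly $\ent_\mu F = H_{\{1\}}+G_{\{1\}}\le K\,H_{V\setminus\{1\}}=\sum_i\mu[\ent_{\mu_i}F]$, which is \eqref{eq:tenso} without even needing the marginal-entropy bound as a separate step. So the ``hard step'' to cite is the subadditivity, and once you have it the rest is elementary; your claim that the marginal-entropy bound ``cannot be established by elementary means'' is the piece that is off. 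A smaller point: ``entropically independent with constant $1$'' is not the standard phrasing; a $K$-homogeneous strongly log-concave distribution is $1/K$-entropically independent, which is precisely the subadditivity above, while \eqref{eq:tenso} is approximate tensorization (or entropy factorization) with constant $1$, a consequence of it.
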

\begin{proof}
From \cite[Lemma 11]{CGM}, or equivalently \cite[Theorem 5]{EntInd22}, any strongly log-concave $\mu$ satisfies the subadditivity inequality, 
  \begin{equation}\label{eq:tenso1}
 \ent_\mu F\ge \sum_{i=1}^K \ent_\mu F_i,\qquad F_i = \mu[F\tc X_i]\,,
\end{equation}
for any nonnegative symmetric function $F$. Note that by symmetry $\ent_\mu F_i=\ent_\mu F_1$ for all $i$. Moreover, it is known that the inequality \eqref{eq:tenso1} continues to hold with $\mu$ replaced by the conditional distribution $\mu(\cdot\tc X_{1},\dots,X_{k})$ for any $k=1,\dots,K-1$ and any realization of the variables $X_j$, $j=1,\dots,k$. 
The factorization statement  \eqref{eq:tenso} then follows from a simple recursive argument, see e.g.\ \cite[Section 4.1]{EntLecNotes}
\end{proof}
Since the measure $\varrho_1$ in \eqref{independent product} is strongly log-concave \cite{borcea2009negative}, we may apply Theorem \ref{th:tenso} with $\mu=\varrho_1$. Taking the expectation with respect to the stochastic localization process, 
\begin{equation}\label{eq:tenso11}
\bbE \left[\ent_{\varrho_1}F\right] \le \sum_{i=1}^K \bbE\left[\varrho_1\left[\ent_{\varrho_{1,i}}F\right]\right],
\end{equation}
where $\varrho_{1,i}=\varrho_{1}(\cdot\tc X_j,\,j\neq i)$. 
We would like to turn the statement \eqref{eq:tenso11} into a factorization statement for our original measure $\nu:=\nu_{L,M}= \bbE[\varrho_1]$. 
The first step is a convexity estimate.
\begin{lemma}\label{lem:convex}
For any nonnegative symmetric function $F$, for all $i\in[K]$, one has
  \begin{equation}\label{eq:subma}
  \bbE\left[\varrho_1\left[\ent_{\varrho_{1,i}}F\right]\right]\le \nu\left[\ent_{\nu_i} F \right]\,.
\end{equation}
\end{lemma}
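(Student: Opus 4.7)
The plan is to combine two basic facts: the martingale identity $\nu=\bbE[\varrho_1]$ from stochastic localization (which is recorded just above the lemma), and the concavity of the map $p\mapsto \ent_p(F)$ in the probability measure $p$, for each fixed nonnegative $F$. Concavity is immediate from the decomposition $\ent_p(F)=p[F\log F]-\varphi(p[F])$ with $\varphi(x)=x\log x$ convex, since both $p[F\log F]$ and $p[F]$ are affine in $p$, so the whole functional is affine minus a convex function of an affine function.

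The key step is to represent the conditional measure $\nu_i=\nu(\,\cdot\,\mid X_{-i})$ as a mixture of the random conditional measures $\varrho_{1,i}=\varrho_1(\,\cdot\,\mid X_{-i})$, where $X_{-i}=(X_j,\,j\neq i)$. Writing $\varrho_1(x_{-i})$ and $\nu(x_{-i})$ for the respective $X_{-i}$-marginal probabilities, the martingale identity gives $\nu(x_{-i})=\bbE[\varrho_1(x_{-i})]$, and Bayes' rule combined with linearity of expectation yields
\begin{equation*}
\nu(\,\cdot\,\mid x_{-i}) \;=\; \bbE\!\left[\varrho_1(\,\cdot\,\mid x_{-i})\,\frac{\varrho_1(x_{-i})}{\nu(x_{-i})}\right],
\end{equation*}
which is a bona fide convex combination of probability measures since the random weights $\varrho_1(x_{-i})/\nu(x_{-i})$ are nonnegative with $\bbE$-mean equal to one.

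Applying Jensen's inequality in the form "concavity of entropy in the measure argument" to the above representation gives, for each $x_{-i}$,
\begin{equation*}
\ent_{\nu(\cdot\mid x_{-i})}(F) \;\ge\; \bbE\!\left[\frac{\varrho_1(x_{-i})}{\nu(x_{-i})}\,\ent_{\varrho_1(\cdot\mid x_{-i})}(F)\right].
\end{equation*}
Multiplying through by $\nu(x_{-i})$ and summing over $x_{-i}$ cancels the denominator on the right and produces exactly the bound \eqref{eq:subma}, after a Fubini exchange of $\bbE$ with the sum over $x_{-i}$. I do not anticipate a genuine obstacle: this is the canonical convexity mechanism that makes stochastic localization useful in the first place, and once the mixture representation is in place the remainder is a one-line Jensen computation. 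The symmetry of $F$ plays no role at this stage; it is used only when the inequality is combined with the factorization of $\varrho_1$ in the preceding step.
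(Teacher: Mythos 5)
Your argument is correct and closes the gap: the concavity of $p\mapsto\operatorname{Ent}_pF$ on the simplex is easily justified, the mixture representation of $\nu_i$ with random weights $w=\varrho_1(x_{-i})/\nu(x_{-i})$ is exactly right (the denominator $\nu(x_{-i})$ is pulled outside the expectation, so no circularity), and the reweighted Jensen step plus Fubini does produce \eqref{eq:subma}. This is the same underlying mechanism as in the paper, packaged slightly differently: the paper expands the entropy, writes $\varrho_{1,i}F=\varrho_1(F;X_{\neq i})/\varrho_1(X_{\neq i})$ as a ratio of quantities that are \emph{linear} in $\varrho_1$, and then invokes the joint convexity of $(a,b)\mapsto a\log(a/b)$ together with the martingale property $\nu=\bbE[\varrho_1]$; you instead normalize to the conditional probability measure first, use only the one-variable convexity of $x\mapsto x\log x$, and compensate for the nonlinearity of the normalization by the change of measure $\bbE\mapsto\bbE[\,w\,\cdot\,]$. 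The two are logically equivalent --- the change-of-measure trick is precisely one standard way to derive joint convexity of $a\log(a/b)$ from the scalar case --- so this is a rephrasing rather than a genuinely new route, but it is arguably the more conceptual formulation, since it isolates the role of the martingale identity as producing a mixture decomposition of the conditional.
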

\begin{proof}
Using the martingale property $\nu=\bbE[\varrho_1]$, 
 \begin{equation}\label{eq:subma1}
\bbE\left[\varrho_1\left[\ent_{\varrho_{1,i}}F\right]\right] = \nu[F\log F] - \bbE\left[\varrho_1\left[\varrho_{1,i}F\log \varrho_{1,i}F\right]\right]\,.
\end{equation}
We write 
 \begin{equation}\label{eq:subma2}
 \varrho_{1,i}F (X) = \frac{\varrho_{1}(F;X_{\neq i}) }{\varrho_{1}(X_{\neq i})},
\end{equation}
where we use the notation $X_{\neq i}=(X_j,\,j\neq i)$, $\varrho_{1}(X_{\neq i}) = \sum_{X_i}\varrho_{1}(X)$, and  $\varrho_{1}(F;X_{\neq i}) = \sum_{X_i}F(X)\varrho_{1}(X)$, and therefore 
\[
\bbE\left[\varrho_1\left[\varrho_{1,i}F\log \varrho_{1,i}F\right]\right] = \sum_{X_{\neq i}}\bbE\left[\varrho_{1}(F;X_{\neq i}) \log \frac{\varrho_{1}(F;X_{\neq i}) }{\varrho_{1}(X_{\neq i}) }\right]\,.
\]
Using $\nu=\bbE[\varrho_1]$,  and the joint convexity of the function $(x,y)\mapsto x\log(x/y)$ on $(0,\infty)^2$, by Jensen's inequality 
\[
\bbE\left[\varrho_1\left[\varrho_{1,i}F\log \varrho_{1,i}F\right]\right]\ge \sum_{X_{\neq i}}\nu(F;X_{\neq i}) \log \frac{\nu(F;X_{\neq i}) }{\nu(X_{\neq i}) }= \nu\left[\nu_{i}F\log \nu_{i}F\right]\,,
\]
where we use the notation \eqref{eq:subma2} for $\nu$ in place of $\varrho_1$. The desired bound \eqref{eq:subma} follows from \eqref{eq:subma1}.  
\end{proof}
%

Next,  we use the so-called entropic stability estimate from \cite{chen2022localization}, which allows one to compare relative entropies with respect to $\varrho_1$ with relative entropies with respect to $\nu$ by using a covariance estimate. 
Let 
\begin{align}\label{eq:covtv}
\cov[\mu](i,j) = \mu[\eta_i\eta_j]- \mu[\eta_i]\mu[\eta_j],
\end{align}
denote the $L\times L$ covariance matrix of any  $\mu\in\cP(\O_{L,M})$, and  write $\l(\cov[\mu])$ for its largest eigenvalue. If $v \in \bbR^{L}$, the tilted measure $\mathcal{T}_v\mu\in\cP(\O_{L,M})$ is defined as 
\begin{align}\label{eq:tvnu}
\mathcal{T}_v\mu(\eta) \propto\mu(\eta)e^{\scalar{v}{\eta}}\,.
	\end{align}
The next lemma is a version of \cite[Proposition 39]{chen2022localization}; see also \cite[Lemma 3.2]{caputo2025factorizations}. The proof can be adapted to our conservative setting without modifications.
\begin{proposition}\label{prop:ent_stab}
	Suppose there exists a deterministic function $\kappa:[0,1]\to \bbR_+$ such that  $\l(\cov[\cT_v\rho_t]) \leq \kappa(t)$ a.s., for all $t \in [0,1],$ for all $v\in\bbR^{L}$.
	Then,  for all functions $F\ge 0$,
	\begin{align}
		\bbE\left[\ent_{\rho_1}F\right] \geq 
		\exp\left(-\l(\L)\int_0^1\kappa(s)ds\right)\,\ent_\nu F\,. 
	\end{align}	
\end{proposition}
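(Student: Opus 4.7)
The plan is to adapt the Itô-calculus strategy of Chen--Eldan \cite{chen2022localization} to the conservative localization process \eqref{stocloc definition}--\eqref{SDE yt}. By homogeneity in $F$, I may assume $\nu[F]=1$, so that $\ent_\nu F = \nu[F\log F]$. Set $Y_t := \varrho_t[F]$; the martingale property of $\varrho_t$ makes $(Y_t)_{t\in[0,1]}$ a nonnegative martingale with $Y_0=1$ and also ensures $\bbE[\varrho_t[F\log F]] = \nu[F\log F]$ for every $t$. Consequently $\Psi(t):=\bbE[\ent_{\varrho_t}F]$ satisfies
\begin{equation}
\Psi(t) = \nu[F\log F] - \bbE[Y_t\log Y_t],\qquad \Psi(0)=\ent_\nu F,
\end{equation}
and the target is the differential bound $\dot\Psi(t)\ge -\l(\L)\kappa(t)\Psi(t)$, which by Grönwall gives $\Psi(1)\ge e^{-\l(\L)\int_0^1\kappa(s)\,ds}\ent_\nu F$.

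The first technical step is to compute the quadratic variation of $Y_t$. Viewing $\varrho_t(\eta)$ as a function of $y_t$ via \eqref{stocloc definition}, one has $\partial_{y_i}\varrho_t(\eta) = \varrho_t(\eta)(\eta_i - \varrho_t[\eta_i])$, so that $\nabla_y Y_t = \cov_{\varrho_t}(F,\eta)$, the $L$-vector with entries $\cov_{\varrho_t}(F,\eta_i)$. Since $d\langle \tilde W\rangle_t = \L\,dt$ and $Y_t$ is a martingale, Itô's formula forces $dY_t = \scalar{\cov_{\varrho_t}(F,\eta)}{d\tilde W_t}$, hence
\begin{equation}
d\langle Y\rangle_t = \scalar{\cov_{\varrho_t}(F,\eta)}{\L\,\cov_{\varrho_t}(F,\eta)}\,dt.
\end{equation}
A second application of Itô to $Y_t\log Y_t$ then yields
\begin{equation}
\frac{d}{dt}\bbE[Y_t\log Y_t] \;=\; \frac{1}{2}\,\bbE\!\left[\frac{1}{Y_t}\scalar{\cov_{\varrho_t}(F,\eta)}{\L\,\cov_{\varrho_t}(F,\eta)}\right].
\end{equation}

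The core ingredient is the following pointwise entropy--covariance inequality, valid for any probability measure $\mu$ on $\O_{L,M}$ and any $F\ge 0$:
\begin{equation}\label{eq:plan-ec}
\scalar{\cov_\mu(F,\eta)}{\L\,\cov_\mu(F,\eta)} \;\le\; 2\,\l(\L)\,\sup_{v\in\bbR^L}\l(\cov[\cT_v\mu])\;\mu[F]\,\ent_\mu F.
\end{equation}
This is the essence of \cite[Proposition 39]{chen2022localization}, also restated as Lemma~3.2 in \cite{caputo2025factorizations}. Its proof expresses $\scalar{u}{\cov_\mu(F,\eta)}$ as the derivative at $s=0$ of $\log\cT_{su}\mu[F]$, identifies the Hessian of $s\mapsto \log\cT_{su}\mu[1]$ with $\scalar{u}{\cov[\cT_{su}\mu]u}$, and then applies a convexity/two-point inequality that upgrades a variance-type bound into an entropy-type bound, producing the factor $\mu[F]\,\ent_\mu F$. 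Crucially, the tilting map $v\mapsto \cT_v\mu$ preserves the constraint set $\O_{L,M}$, so the conservation law introduces no new difficulty relative to the unconstrained case and the Chen--Eldan argument transfers verbatim.

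Applying \eqref{eq:plan-ec} with $\mu=\varrho_t$, dividing by $Y_t$, taking expectation and invoking the a.s.\ hypothesis $\sup_v\l(\cov[\cT_v\varrho_t])\le \kappa(t)$ gives
\begin{equation}
\frac{d}{dt}\bbE[Y_t\log Y_t] \;\le\; \l(\L)\kappa(t)\,\bbE[\ent_{\varrho_t}F] \;=\; \l(\L)\kappa(t)\,\Psi(t),
\end{equation}
equivalently $\dot\Psi(t)\ge -\l(\L)\kappa(t)\Psi(t)$, and Grönwall finishes the proof. The main obstacle is the pointwise estimate \eqref{eq:plan-ec}: bounding the gradient of $Y_t$ by an \emph{entropic} (rather than variance-type) quantity is what produces the exponential dependence in $\kappa$ and is the subtle ingredient borrowed from stochastic localization theory. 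Once this bound is in place, the rest of the argument is a mechanical application of Itô's formula and Grönwall.
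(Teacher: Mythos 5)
Your proposal is correct, and it follows essentially the same route the paper takes: the paper simply cites \cite[Proposition 39]{chen2022localization} (and \cite[Lemma 3.2]{caputo2025factorizations}) and observes that the argument transfers to the conservative setting verbatim, which is exactly the Itô-plus-entropic-stability calculation you reconstruct. The key technical step \eqref{eq:plan-ec} is indeed the Chen--Eldan "tilt stability implies entropic stability" lemma, proved by the Gibbs variational principle together with the second-order Taylor bound $\log\mu[e^{\scalar{u}{\eta}}]-\mu[\scalar{u}{\eta}]\le\frac12\kappa\|u\|^2$ (where $\kappa$ bounds $\lambda(\cov[\cT_{su}\mu])$ along the segment $s\in[0,1]$), and your observation that the linear tilt $\cT_v$ preserves the hyperplane constraint $\O_{L,M}$ is precisely what makes the transfer painless. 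One very minor slip in the sketch: the Hessian in question is that of $s\mapsto\log\mu[e^{s\scalar{u}{\eta}}]$ (the log-partition function of the tilt), not of $\log\cT_{su}\mu[1]$, which is identically zero; this is clearly what you meant and does not affect the argument.
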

We turn to a bound on the covariance.  The following was established  
 in \cite[Lemma 50]{chen2022localization} for the non conservative setting, based on a decomposition introduced in 
 \cite{BB19}; see also \cite[Lemma 3.4]{caputo2025factorizations}. We adapt the proof given in \cite{caputo2025factorizations} to the present conservative setting.

\begin{proposition}\label{prop_cov}
Let $\nu =\nu_{L,M}$ be the canonical Ising measure \eqref{RestrDU} with positive definite interaction matrix $\L$
and external fields $w\in\bbR^L$. If $\l(\L)<1/2$, then the largest eigenvalue of $\cov[\nu]$ satisfies
\begin{align}
	\l(\cov[\nu]) \leq  \frac{2}{1-2\l(\L)}\,.	\end{align}	
\end{proposition}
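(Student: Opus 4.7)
The plan is to adapt the Gaussian decomposition argument of \cite{chen2022localization,caputo2025factorizations} to the conservative setting, the novelty being that the constraint $\sum_i\eta_i=M$ forces the ``base'' measure in the decomposition to be a Bernoulli product \emph{conditioned on a linear statistic} rather than a pure product; this is precisely the step where I expect the main difficulty, since one must recover a uniform covariance bound in the presence of the sum constraint and arbitrary external field $w$. First, I would open up the Gaussian moment generating identity $e^{\frac12\scalar{\eta}{\L\eta}}=\E_g[e^{\scalar{g}{\eta}}]$ with $g\sim N(0,\L)$ (valid since $\L\succ 0$) and insert it into \eqref{RestrDU}, obtaining the mixture representation $\nu = \E_{\tilde g}[\nu_{\tilde g}]$, where for each deterministic $g$,
\begin{equation*}
\nu_g(\eta) \;=\; \tfrac1{Z_g}\,e^{\scalar{g+w}{\eta}}\,\mathbf 1(\eta\in\O_{L,M}),\qquad Z_g=\textstyle\sum_{\eta\in\O_{L,M}}e^{\scalar{g+w}{\eta}},
\end{equation*}
is a Bernoulli product on $\{-1,+1\}^L$ with tilted field $g+w$, conditioned on $\sum_i\eta_i=M$, while $\tilde g$ denotes the reweighting of $N(0,\L)$ by the mass $Z_g$. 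Setting $m(g):=\nu_g[\eta]=\nabla_g\log Z_g$, the law of total variance applied to $\scalar{x}{\eta}$ yields
\begin{equation*}
\scalar{x}{\cov[\nu]\,x} \;=\; \E_{\tilde g}\!\bigl[\scalar{x}{\cov[\nu_{\tilde g}]\,x}\bigr] \;+\; \var_{\tilde g}\!\bigl(\scalar{x}{m(\tilde g)}\bigr).
\end{equation*}

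Second, I would establish the uniform operator bound $\cov[\nu_g]\preceq 2I$. Unlike the non-conservative case, where $\cov[\pi_g]=\mathrm{diag}(\mathrm{sech}^2(g_i+w_i))\preceq I$ is diagonal, here the sum constraint introduces off-diagonal terms. However, $\nu_g$ is a Bernoulli product conditioned on a linear statistic, hence Strongly Rayleigh and in particular negatively associated \cite{borcea2009negative}: $\cov_{\nu_g}(\eta_i,\eta_j)\le 0$ for $i\neq j$. Combined with $\var_{\nu_g}(\eta_i)\le 1$ and the conservation identity $\sum_j\cov_{\nu_g}(\eta_i,\eta_j) = \cov_{\nu_g}(\eta_i,\sum_j\eta_j) = 0$, a Gershgorin estimate yields $\lambda(\cov[\nu_g])\le 2$ uniformly in $g,w,L,M$; equivalently, $\cov[\nu_g]^2\preceq 2\,\cov[\nu_g]$ in PSD order. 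This step is what closes the gap at the cost of a factor $2$ relative to the non-conservative setting.

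Third, I would close the argument with a Brascamp--Lieb inequality on $\tilde g$. The Lebesgue density of $\tilde g$ is proportional to $\exp(-V(g))$ with $\Hess V(g) = \L^{-1}-\nabla^2\log Z_g = \L^{-1}-\cov[\nu_g]$. By the previous step and the assumption $\lambda(\L)<\tfrac12$, one has $\Hess V(g)\succeq \L^{-1}-2I\succeq \tfrac{1-2\lambda(\L)}{\lambda(\L)}I$, so $V$ is strictly convex. Brascamp--Lieb applied to $f(g):=\scalar{x}{m(g)}$, with $\nabla f(g)=\cov[\nu_g]\,x$ together with the operator inequality above, gives
\begin{equation*}
\var_{\tilde g}\!\bigl(\scalar{x}{m(\tilde g)}\bigr)\;\le\; \tfrac{\lambda(\L)}{1-2\lambda(\L)}\,\E_{\tilde g}\!\bigl[\|\cov[\nu_{\tilde g}]\,x\|^2\bigr] \;\le\; \tfrac{2\lambda(\L)}{1-2\lambda(\L)}\,\E_{\tilde g}\!\bigl[\scalar{x}{\cov[\nu_{\tilde g}]\,x}\bigr].
\end{equation*}
Plugging into the total variance identity and using $\E_{\tilde g}[\scalar{x}{\cov[\nu_{\tilde g}]\,x}]\le 2\|x\|^2$ yields $\scalar{x}{\cov[\nu]\,x}\le \tfrac{2}{1-2\lambda(\L)}\|x\|^2$, which is the claim.
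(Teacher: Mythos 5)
Your proof is correct and follows essentially the same route as the paper's: the Gaussian mixture decomposition of $\nu$ into tilted Bernoulli products conditioned on $\O_{L,M}$, the Strong Rayleigh/negative-correlation plus conservation-identity argument to get $\cov[\nu_g]\preceq 2I$, and Brascamp--Lieb on the Gaussian component with the Hessian lower bound $\L^{-1}-2I\succeq \tfrac{1-2\l(\L)}{\l(\L)}I$. The only (cosmetic) difference is that you use the sharper $\cov[\nu_g]^2\preceq 2\cov[\nu_g]$ before adding the two contributions, whereas the paper bounds the two terms separately via $\|\cov[\nu_g]x\|^2\le 4\|x\|^2$; both yield the identical constant $2/(1-2\l(\L))$.
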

\begin{proof}
By gaussian integration, for any $\eta\in\O_{L,M}$, 
	\begin{align}\label{eq:covrov12}
		\exp(\tfrac12\scalar{\eta}{\L\eta})  \propto 
		 \int_{\bbR^{L}}dy\exp\left(-\tfrac12\scalar{y}{\L^{-1}y} +\scalar{\eta}{y} \right),
	\end{align}
where the implicit positive constant is independent of $\eta$. Define the function
\begin{align*}
		f(y) := \exp\left(-\tfrac12\scalar{y}{\L^{-1}y} + \log\left(Z\left(y + w\right)\right)\right)\,, 
	\end{align*}
where $Z\left(v\right):=\sum_{\eta \in \O_{L,M}}\exp\left(\scalar{\eta}{v}\right)$, for any $v\in\bbR^{L}$.
From \eqref{eq:covrov12} we obtain 
\begin{align*}
		\nu(\eta) &\propto \int_{\bbR^{L}}dyf(y)Z\left(y + w\right)^{-1}
\exp\left( \scalar{\eta}{y + w} \right){\bf 1}(\eta \in \O_{L,M})\,.
	\end{align*}
		If $\mu$ denotes the uniform probability measure over $\O_{L,M}$ then for all $v\in\bbR^L$, \[
		\mathcal{T}_{v}\mu(\eta)=Z\left( v\right)^{-1}\exp\left( \scalar{\eta}{v} \right){\bf 1}_{\{\eta \in \O_{L,M}\}}\,.\] 
		Therefore, $\nu$ is given by a mixture of tilted measures $\mathcal{T}_{v}\mu$, 
\begin{align}\label{eq:decrho}
\nu(\eta) = \int \mathcal{T}_{y + w}\mu(\eta)\hat f(y)dy\,,\end{align}
where $\hat f:=f/\int dyf(y)$ is a probability density on $\bbR^{L}$.
By the law of total covariance,
	\begin{align}\label{eq:covtot}
		\cov[\nu] &= \int   dy\hat f(y) \cov [\mathcal{T}_{y + w}\mu]
		 \,+\cC_\nu\,,
	\end{align}
	where 
	$\cC_\nu$ stands for the $L\times L$ matrix
	\[
	\cC_\nu(i,j):= \int  \g_i(y)\g_j(y) \hat f(y) dy - \left(\int \g_i(y) \hat f(y) dy\right)\left(\int \g_i(y)\g_j(y) \hat f(y) dy\right),
	\]
	and we use the notation $\g_i(y):=\sum_{\eta\in\O_{L,M}}\mathcal{T}_{y + w}\mu(\eta)\eta_i$, $i\in[L]$. 
Next, we observe that $\l(\cov [\mathcal{T}_{v}\mu])\le 2$ for any $v\in\bbR^L$. Indeed, $\mathcal{T}_v\mu$ is Strong Rayleigh \cite{borcea2009negative}, which implies negative correlation, and thus for any $u\in\bbR^L$, letting $C=\cov [\mathcal{T}_{v}\mu]$, one
has
\[
\sum_j |C_{i,j}| = C_{i,i} - \sum_{j:\,j\neq i} C_{i,j} = 2C_{i,i}\,,
\]
for any $i\in[L]$, where we use $C_{i,j}\le 0$ for $i\neq j$ and the fact that $\sum_{j} C_{i,j}=0$. Since $\l(C)\leq \max_i\sum_j |C_{i,j}|$ and $C_{i,i}\le 1$ for any $i\in[L]$ one has the desired bound $\l(\cov [\mathcal{T}_{v}\mu])\le 2$.

We turn to a bound on the matrix $\cC_\nu$. For any $u \in\bbR^{L}$ we write
	\begin{align}\label{eq:covy}
		\scalar{u}{\cC_\nu u} &= \textstyle{\sum_{i,j}}u_iu_j\cC_\nu(i,j)=\var\left(g_u(Y)\right)\,, 
			\end{align}
where $Y$ is the $\bbR^{L}$-valued random variable with density $\hat f$, 
and 
$g_u(y):=\sum_{i} u_i  \g_i(y)$. 
We are going to use the Brascamp-Lieb inequality (Theorem 4.1 in \cite{brasclieb}). 
Namely, suppose that 
$-{\rm Hess}\log \hat f (y)$ is positive definite, with  lower bound $a\,{\rm Id}$ where $a>0$ and ${\rm Id}$ is the $L\times L$ identity matrix. Then, 
	\begin{align}\label{eq:BL}
	\var\left(g_u(X)\right)\le \frac1a\int dy \hat f(y)\|\nabla g_u(y)\|_2^2 \,.
	\end{align}
Note that, 
\[
\nabla_i g_u(y) = \sum_{j} u_j  \frac{\partial\g_j(y)}{\partial y_i}= \sum_{j}u_i C_{i,j}\,,
\]
where $C=\cov [\mathcal{T}_{y+w}\mu]$.  As discussed above, one has $\l(C)\le 2$.
Therefore,  for all $y\in\bbR^{L}$,  
\[
	\|\nabla g_u(y)\|_2^2=\scalar{Cu}{Cu}\le 4\scalar{u}{u}\,.
	\] 
From \eqref{eq:BL} we conclude 	
\begin{align}\label{eq:BL2}
	\scalar{u}{\cC_\nu u} \le \frac{4}{a} \scalar{u}{u}\,\,.
	\end{align}	
	It remains to establish the bound $-{\rm Hess}\log \hat f (y)\ge a\,{\rm Id}$. 
	Since ${\rm Hess}\log\left(Z\left(y + v\right)\right) =  \cov[\mathcal{T}_{y + w}\mu]$,  one has
	 ${\rm Hess}\log\left(Z\left(y + w\right)\right)\le  2\,{\rm Id}$. Moreover, ${\rm Hess}\scalar{y}{\L^{-1}y}\ge 2\l(\L)^{-1}{\rm Id}$. Therefore we obtain the quadratic form bound
		\begin{align}\label{eq:BLineq3}
-{\rm Hess}\log \hat f(y) )= {\rm Hess}\left(\tfrac{1}2\scalar{y}{\L^{-1}y} - \log\left(Z\left(y + v\right)\right) \right)\ge 
\left(\frac{1}{\l(\L)} -2\right){\rm Id}.
	\end{align}
	Summarizing, we may take $a =  (1-2\l(\L))/\l(\L)$. Since $4/a + 2 =  2/(1-2\l(\L))$, the desired conclusion follows from  \eqref{eq:covtot}. 
\end{proof}

\medskip

We now have all ingredients to complete the proof of Theorem \ref{MLSI DOWN UP}. From Proposition \ref{prop_cov} applied with $\L$ replaced by $(1-t)\L$, $t\in[0,1]$, and using the expression \eqref{stocloc definition} for $\varrho_t$ we see that the assumption of  Proposition \ref{prop:ent_stab} holds with 
\[
\kappa(s) = \frac{2}{1 - 2(1 - s)\lambda(\Lambda)}, \quad \forall s \in [0,1].
\]
Integrating, $\int_0^1\kappa(s)ds = -\frac1{\l(\L)}\log(1-2\l(\L))$. Therefore, Proposition \ref{prop:ent_stab} implies 
\begin{equation}
    \mathrm{Ent}_{\nu}F \leq \frac{1}{1 - 2\lambda(\Lambda)} \, \mathbb{E} \left[ \mathrm{Ent}_{\varrho_1}F \right],
\end{equation}
for all $F : \Omega_{L,M} \to \mathbb{R}_+$.
This, together with \eqref{eq:tenso11} and \eqref{eq:subma}, proves that 
\begin{equation}
    \mathrm{Ent}_{\nu}F \leq \frac{1}{1 - 2\lambda(\Lambda)} \, \sum_{i=1}^K \nu\left[ \mathrm{Ent}_{\nu_i}F \right],
\end{equation}
Using \eqref{eq:jensenCov},
it follows from \eqref{Ddumu}
that 
\begin{equation}
    \mathrm{Ent}_{\nu}F \leq \frac{1}{1 - 2\lambda(\Lambda)} \,
    \cE^{\rm du} (F,\log F) 
\end{equation}
This ends the proof of Theorem \ref{MLSI DOWN UP}.

\subsection{Multicomponent mean field exchange dynamics}
We extend the previous analysis to the case where the system consists of several regions each of which is restricted to have a fixed magnetization. To this end, consider again the probability measure $\mu_{\L,w}$ on $\{-1,+1\}^L$ given in \eqref{L-Ising-Measure}.
Let $\cB$ denote a partition of $[L]$, that is $\cup_{B\in\cB}B = [L]$ and $B\cap B'=\emptyset$ for all $B\neq B'$. Let also $\cM=(M_B, \,B\in\cB)$, $M_B\in\{-|B|,\dots,|B|\}$,  denote the magnetization vector, and define
\[
    \Omega_{L,\cM} := \left\{ \eta \in \{-1,+1\}^{L} :\textstyle{ \sum_{i \in B}} \eta_i = M_B, \quad \forall B \in \mathcal{B} \right\}\,,
\]
and the multicomponent canonical measure
\[
\nu_{L,\cM}: =  \mu_{\L,w}\left(\cdot\tc\O_{L,\cM}\right)\,.
\]
For every $B\in\cB$ we define a Down-Up walk with rates
\begin{equation}\label{RatesDUB}
c^{\rm du}_B(\eta,\eta^{i,j}) = \mathbf{1}(i,j\in B)\mathbf{1}(\eta_i = +1,\eta_j = -1)
\frac{\mu_{\L,w}(\eta^{i,j})}{\sum_{k \in U_{B,i}(\eta)} \mu_{\L,w}(\eta^{ik})},
\end{equation}
where \(U_{B,i}(\eta) = \{j \in B : \eta_j = -1\} \cup \{i\}\). 
Consider now the Down-Up walk given by the generator \eqref{GenDU} with rates 
\[
c^{\rm du}(\eta,\eta^{i,j}):=\sum_{B\in\cB}c^{\rm du}_B(\eta,\eta^{i,j}).
\] 
It is not hard to check that, for any choice of the magnetization vector $\cM$, the process is irreducible in the state space $\O_{L,\cM}$, with unique reversible and invariant measure $\nu_{L,\cM}$. 
The Dirichlet form is given by 
\begin{gather}\label{DirDUB}
\mathcal{E}^{\rm du}_\cB(F,G) =  
\frac{1}{2}\sum_{B\in\cB}\sum_{i,j\in B} \nu_{L,\cM}
\left[ c^{\rm du}_B(\eta,\eta^{i,j})\nabla^{i,j}F(\eta)\nabla^{i,j}G(\eta) \right]\,, 
\end{gather}
for \(F,G : \Omega_{L,\cM} \to \mathbb{R}\).
Let also $\ent_{\nu_{L,\cM}}(F)$ denote the entropy with respect to the multi-canonical Ising measure $\nu_{L,\cM}$.
The multicomponent version of Theorem \ref{MLSI DOWN UP} is as follows.

\begin{theorem}[MLSI for Multicomponent Down-Up walk]\label{MLSI DOWN UP MC}
Let $\cB$ be a fixed partition of $[L]$. Suppose that \(\Lambda\) is a nonnegative definite $L\times L$ matrix with largest eigenvalue \(\lambda(\Lambda) < 1/2\). Then 
for all values of the vector $\cM$, for all $w\in\bbR^L$,
\begin{equation}\label{MLSIB}
\mathcal{E}^{\rm du}_\cB(F, \log F)\ge (1 - 2\lambda(\Lambda))^2\, \mathrm{Ent}_{\nu_{L,\cM}} (F) \,,
\end{equation}
for every \(F : \Omega_{L,\cM} \to \bbR_+\).
\end{theorem}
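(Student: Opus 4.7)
\medskip

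\emph{Plan of proof.} The strategy is to mimic the stochastic-localization argument used for Theorem~\ref{MLSI DOWN UP}, but to insert an additional block-level factorization step which accounts for the squared factor in \eqref{MLSIB}. The starting point is the block decomposition of the Dirichlet form. By \eqref{DirDUB}, and by the tower property of conditional expectation, one has
\begin{equation}
\mathcal{E}^{\rm du}_\cB(F,\log F) \;=\; \sum_{B\in\cB} \nu_{L,\cM}\!\left[\mathcal{E}^{\rm du}_{B}(F,\log F \,\vert\, \eta_{B^c})\right],
\end{equation}
where $\mathcal{E}^{\rm du}_B(\cdot,\cdot \,\vert\, \eta_{B^c})$ denotes the Dirichlet form of the single-block Down-Up walk on the conditional distribution $\nu_B := \nu_{L,\cM}(\cdot \,\vert\, \eta_{B^c})$. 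Crucially, $\nu_B$ is itself a canonical Ising measure of the form \eqref{RestrDU}: its interaction matrix is the principal submatrix $\Lambda\vert_{B\times B}$, whose largest eigenvalue is bounded by $\lambda(\Lambda)$ by Cauchy interlacing, and the external field incorporates both $w\vert_B$ and the boundary contribution $\Lambda\vert_{B\times B^c}\,\eta_{B^c}$. Thus Theorem~\ref{MLSI DOWN UP} applies to $\nu_B$ uniformly in the boundary condition, giving the first factor of $(1-2\lambda(\Lambda))$.

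The remaining task is a \emph{block factorization of entropy},
\begin{equation}\label{eq:blockfactplan}
\operatorname{Ent}_{\nu_{L,\cM}}(F) \;\leq\; \frac{1}{1-2\lambda(\Lambda)} \sum_{B\in\cB} \nu_{L,\cM}\!\left[\operatorname{Ent}_{\nu_B}(F)\right],
\end{equation}
which supplies the second factor. To prove \eqref{eq:blockfactplan}, I would run the stochastic localization \eqref{stocloc definition}--\eqref{SDE yt} for $\nu_{L,\cM}$, with the indicator $\mathbf 1(\eta\in\Omega_{L,M})$ replaced by $\mathbf 1(\eta\in\Omega_{L,\cM})$. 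The key observation is that, since the blocks of $\cB$ are disjoint and the constraint $\sum_{i\in B}\eta_i=M_B$ is imposed block by block, the localized measure at time $t=1$ factorizes exactly as a tensor product over $\cB$ of single-block constrained Bernoulli measures. Entropy therefore tensorizes trivially:
\begin{equation}
\operatorname{Ent}_{\varrho_1}(F) \;\leq\; \sum_{B\in\cB}\varrho_1\!\left[\operatorname{Ent}_{\varrho_{1,B}}(F)\right],
\end{equation}
where $\varrho_{1,B}=\varrho_1(\cdot\,\vert\,\eta_{B^c})$. One then transfers to $\nu_{L,\cM}$ by the entropic-stability Proposition~\ref{prop:ent_stab}, whose hypothesis is supplied by Proposition~\ref{prop_cov} (which applies unchanged, as the Gaussian decomposition \eqref{eq:covrov12} and the Strong Rayleigh/negative-correlation bound $\lambda(\operatorname{Cov}[\mathcal T_v\mu])\leq 2$ are insensitive to whether the uniform reference measure $\mu$ lives on $\Omega_{L,M}$ or on $\Omega_{L,\cM}$, the latter being a product of single-block magnetization constraints). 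Finally, the Jensen-type convexity bound of Lemma~\ref{lem:convex} generalizes without modification, with $\nu_i$ replaced by $\nu_B$ and $\varrho_{1,i}$ by $\varrho_{1,B}$, because only the joint convexity of $(x,y)\mapsto x\log(x/y)$ and the martingale property $\nu_{L,\cM}=\mathbb E[\varrho_1]$ are used.

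Combining the two factorization steps yields the claim. The main conceptual point, and the only genuinely new ingredient compared with the single-component case, is the block tensorization of $\varrho_1$; once this is in hand, every other step is a direct transcription of the arguments already developed in the proof of Theorem~\ref{MLSI DOWN UP}. The main technical point to verify carefully is that the covariance estimate in Proposition~\ref{prop_cov} really does survive the passage to the multi-component constraint: this reduces to checking that the uniform measure on $\Omega_{L,\cM}$ is Strong Rayleigh, which holds because it is a product over $B\in\cB$ of uniform measures on fixed-magnetization slices, each of which is Strong Rayleigh, and Strong Rayleigh is closed under independent products.
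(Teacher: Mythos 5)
Your proposal is correct and follows essentially the same route as the paper: decompose the Dirichlet form over blocks, apply Theorem~\ref{MLSI DOWN UP} to each conditional canonical Ising measure $\nu_B$ (using $\lambda(\Lambda|_{B\times B})\le \lambda(\Lambda)$), and then establish the block entropy factorization \eqref{eq:blockfactplan} via stochastic localization, exploiting that $\varrho_1$ tensorizes over $\cB$ together with Lemma~\ref{lem:convex}, Proposition~\ref{prop:ent_stab}, Proposition~\ref{prop_cov}, and closure of the Strong Rayleigh property under products. This matches the paper's proof, which states the factorization \eqref{eq:blockfactplan} as Proposition~\ref{approximate tensorization of entropy over components} and then combines it with the single-block MLSI exactly as you describe.
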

The key to prove the above theorem is the following factorization statement. For any $B\in\cB$ let $\nu_B=\nu_{L,\cM}(\cdot\tc \eta_j,j\notin B)$ denote distribution inside $B$ conditioned on the outside variables, and let  
$\mathrm{Ent}_{\nu_B}(F)=\nu_B(F\log F)-\nu_B(F)\log\nu_B(F)$ be the associated entropy functional. Note that $\mathrm{Ent}_{\nu_B}(F)$ is a function of the outside variables $\eta_j,j\notin B$.
\begin{proposition}[Approximate Factorization of Entropy]\label{approximate tensorization of entropy over components}
Under the assumptions of Theorem \ref{MLSI DOWN UP MC}, for all $F:\Omega_{L,\cM} \to \bbR_+$, 
\begin{equation}\label{MLSIBa}
\sum_{B \in \mathcal{B}} \nu_{L,\cM}\left[\operatorname{Ent}_{\nu_B}(F)\right]\ge (1 - 2\lambda(\Lambda))\, \mathrm{Ent}_{\nu_{L,\cM}}(F) \,.
\end{equation}
\end{proposition}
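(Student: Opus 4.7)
The plan is to mirror the proof of Theorem \ref{MLSI DOWN UP}, replacing the single-site factorization of Proposition \ref{th:tenso} by the trivial decoupling of the stochastic localization at time $t=1$ across the blocks $B\in\cB$.

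First I would run the stochastic localization process $(\varrho_t)_{t\in[0,1]}$ associated with $\nu:=\nu_{L,\cM}$, defined exactly as in \eqref{stocloc definition}--\eqref{SDE yt} but with the indicator $\mathbf{1}(\eta\in\Omega_{L,M})$ replaced by $\mathbf{1}(\eta\in\Omega_{L,\cM})$. The martingale identity $\nu=\mathbb{E}[\varrho_t]$ carries over. At $t=1$ the quadratic interaction has been removed, and since the constraint $\sum_{i\in B}\eta_i=M_B$ involves only variables in $B$, the measure factorizes over blocks: $\varrho_1=\bigotimes_{B\in\cB}\varrho_{1,B}$, where each $\varrho_{1,B}$ is a product of Bernoulli marginals on $B$ conditioned on fixed magnetization. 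Because $\varrho_1$ is a product across disjoint blocks, the standard tensorization of entropy gives
\[
\operatorname{Ent}_{\varrho_1}(F)\;\le\;\sum_{B\in\cB}\varrho_1\!\left[\operatorname{Ent}_{\varrho_{1,B}}(F)\right].
\]

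Second, I would prove the block-level analogue of Lemma \ref{lem:convex}: for every $B\in\cB$ and $F\ge 0$,
\[
\mathbb{E}\!\left[\varrho_1\!\left[\operatorname{Ent}_{\varrho_{1,B}}(F)\right]\right]\;\le\;\nu\!\left[\operatorname{Ent}_{\nu_B}(F)\right].
\]
The proof is a direct transcription of Lemma \ref{lem:convex} with the single coordinate $X_i$ replaced by the block $\eta_B$: write $\varrho_{1,B}F$ as the ratio $\varrho_1(F;\eta_{[L]\setminus B})/\varrho_1(\eta_{[L]\setminus B})$, use $\nu=\mathbb{E}[\varrho_1]$, and apply Jensen to the jointly convex function $(x,y)\mapsto x\log(x/y)$.

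Third, I would invoke Proposition \ref{prop:ent_stab} together with Proposition \ref{prop_cov}, which adapts to the multi-canonical setting: the Gaussian decomposition \eqref{eq:decrho} still expresses each tilted measure $\mathcal{T}_v\nu$ as a mixture of tilted uniform distributions on $\Omega_{L,\cM}$, and each such uniform measure is an independent product over $B\in\cB$ of uniform ``$k$ out of $n$'' Bernoulli distributions, which are Strong Rayleigh. Since independent products of Strong Rayleigh measures remain Strong Rayleigh, negative association delivers $\lambda(\operatorname{Cov}[\mathcal{T}_v\nu])\le 2$ uniformly in $v$, and the Brascamp--Lieb step is unchanged. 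Consequently
\[
\operatorname{Ent}_\nu(F)\;\le\;\frac{1}{1-2\lambda(\Lambda)}\,\mathbb{E}\!\left[\operatorname{Ent}_{\varrho_1}(F)\right],
\]
and chaining the three displayed inequalities yields \eqref{MLSIBa}. The only nontrivial checkpoint is the preservation of the Strong Rayleigh property underlying the bound $\lambda(\operatorname{Cov})\le 2$ when multiple simultaneous block constraints are imposed; this is the main obstacle, and it is resolved by the closure of the Strong Rayleigh class under independent products, one factor per block.
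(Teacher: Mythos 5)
Your proposal is correct and follows essentially the same path as the paper: stochastic localization on $\Omega_{L,\cM}$, the observation that $\varrho_1$ factorizes as a product over blocks $B\in\cB$, standard tensorization of entropy, the block-level version of Lemma~\ref{lem:convex}, and then the entropic-stability and covariance estimates of Propositions~\ref{prop:ent_stab} and~\ref{prop_cov} using that independent products of Strong Rayleigh measures are Strong Rayleigh. One small slip worth noting: negative association gives $\lambda(\operatorname{Cov}[\cT_v\mu])\le 2$ for the \emph{tilted uniform} measure $\mu$ on $\Omega_{L,\cM}$ (not for $\cT_v\nu$ itself as you wrote); the bound for $\cT_v\nu$ is $2/(1-2\lambda(\Lambda))$ and is obtained only after the law of total covariance and the Brascamp--Lieb step, which you correctly invoke anyway.
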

\begin{proof}
The proof can be obtained as in \cite[Theorem 2.8]{caputo2025factorizations}, which establishes a Shearer-type inequality for multicomponent systems. For the reader's convenience we sketch the main steps. Consider the measure $\varrho_t$ defined as in \eqref{stocloc definition}
with the set $\O_{L,M}$ replaced by $\O_{L,\cM}$. A key feature of the stochastic localization process in this case is that the measure $\varrho_1$ is now a product over $B\in\cB$, namely
\[
\varrho_1 = \bigotimes_{B\in\cB} \varrho_{1,B}\,,\qquad \varrho_{1,B} = \cT_{y_{1,B}+w_B}\mu_B\,,
\]
where $\mu_B$ is the uniform distribution over $\eta\in \{-1,+1\}^B$ such that $\sum_{j\in B}\eta_j=M_B$, and $y_{1,B},w_B$ denote the projection of the $L$-vectors $y_1,w$ onto $B$. This can be seen from \eqref{independent product} where now $\O_{L,M}$ is replaced by $\O_{L,\cM}$. 

By the tensorization property of product measures, see e.g.\ \cite{EntLecNotes}, one has the factorization
\begin{equation}\label{eq:tensoB}
\ent_{\varrho_1}F \le \sum_{B\in\cB} \varrho_1\left[\ent_{\varrho_{1,B}}F\right]\,.
\end{equation}
Moreover, we may apply the convexity argument in Lemma \ref{lem:convex} to obtain, for any $B\in\cB$, 
\begin{equation}\label{eq:tensoB1}
\bbE\left[ \varrho_1\left[\ent_{\varrho_{1,B}}F\right]\right]\le \nu_{L,\cM}\left[\ent_{\nu_B}F\right]\,.
\end{equation}
At this point we repeat the steps in  Proposition \ref{prop:ent_stab} and Proposition \ref{prop_cov}. The fact that 
the measures $\cT_{v}\mu_B$, $B\in\cB$, are Strong Rayleigh for any $v\in\bbR^B$ implies that their product is also  Strong Rayleigh, and thus the same covariance estimates can be used. In conclusion,
\[
\sum_{B \in \mathcal{B}} \nu\left[\operatorname{Ent}_{\nu_B}(F)\right] \,\ge\, \mathbb{E}_\mu\left[\operatorname{Ent}_{\varrho_{1}}(F)\right]\,\ge\,
(1 - 2\lambda(\Lambda))\, \mathrm{Ent}_{\nu_{L,\cM}}(F) \,.
\]
\end{proof}

\begin{proof}[Proof of Theorem \ref{MLSI DOWN UP MC}]
For every $B\in\cB$, the measure $\nu_B$ is of the form of a canonical Ising measure as in \eqref{RestrDU} with $L=|B|$, $\L$ replaced by $\L_B=(\L_{i,j},\,i,j\in B)$, the principal submatrix corresponding to the set $B$, and $w$ replaced by some new external field $w(\eta_{B^c})=(w_i(\eta_{B^c}),\,i\in B)$,  which depends on the outside variables $   \eta_{B^c}=(\eta_j, \,j\notin B)$ through
\[
w_i(\eta_{B^c})=w_i+\sum_{j\notin B}\L_{i,j}\eta_j\,.
\]
 Thus we may apply Theorem \ref{MLSI DOWN UP} to this system. Noting that the rates $c^{\rm du}_B(\eta,\eta^{i,j})$ can be written as in \eqref{RatesDUB} with $\mu_{\L,w}$ replaced by $\nu_B$, this gives, for all $B$, uniformly in $\eta_{B^c}$, 
\begin{gather}\label{DirDUB2}
\frac{1}{2}\sum_{i,j\in B} \nu_{B}
\left[ c^{\rm du}_B(\eta,\eta^{i,j})\nabla^{i,j}F(\eta)\nabla^{i,j}\log F(\eta) \right]\ge \,(1 - 2\lambda(\Lambda_B))\, \mathrm{Ent}_{\nu_B}(F)\,.
\end{gather}
Summing over $B\in\cB$, taking the expectation with respect to $\nu_{L,\cM}$, and noting that $\lambda(\Lambda_B)\le \lambda(\Lambda)$,
\begin{gather}\label{DirDUB3}
\mathcal{E}^{\rm du}_\cB(F, \log F)\ge (1 - 2\lambda(\Lambda))\sum_{B\in\cB}
\nu_{L,\cM}\left[\mathrm{Ent}_{\nu_B}(F)\right].
\end{gather}
The statement in Theorem \ref{MLSI DOWN UP MC} then follows from Proposition \ref{approximate tensorization of entropy over components}.
\end{proof}

\subsection{Proof of Theorem \ref{th:MLSI}}
We apply Theorem \ref{MLSI DOWN UP MC} with the notation given by the correspondence in \eqref{eq:corres}, and with the following choice of blocks $B$.
Given a partition \(\mathcal{A}\) of \([n]\), we define the associated  partition \(\mathcal{B} = (B(A),\,A\in\cA)\) of \([Nn]\) where, for each $A\in\cA$,
$B(A)$ is the stripe $[N]\times A$, or more formally
\[
    B(A) := \{i \in [Nn] : i = \ell + qn,\ \ell \in A,\ q \in \mathbb{Z}_+,\ 1\le\ell + qn \leq Nn\}.
\]
Note that $\{B(A), A\in\cA\}$ forms a partition of $[Nn]$, and if $B=B(A)$, then $\eta_B$ corresponds to the spins $(\si_\ell(i), \ell\in A,i\in[N])$. Let $\mu$ be the canonical Ising measure defined in \eqref{eq:pbm}, where $\r\in \cD_{\cA,N}$, and for $B=B(A)$, write $\mu_B$ for the measure 
\[
\mu_B = \mu(\cdot\tc \si_\ell(i), \ell\notin A, i\in[N]),
\] that is $\mu_B$ is the measure obtained by conditioning on all variables outside of $B$. 
The factorization in Proposition \ref{approximate tensorization of entropy over components} is equivalent to 
\begin{equation}\label{MLSIB2}
\sum_{B \in \mathcal{B}} \mu\left[\operatorname{Ent}_{\mu_B}(F)\right]\ge (1 - 2\lambda(\Lambda))\, \mathrm{Ent}_{\mu}(F) \,.
\end{equation}
Recalling that $\cK$ is now given by $\cK(\ell,k)=\frac1{|A|}$ if $\ell,k\in A$ and $\cK(\ell,k)=0$ otherwise, the Dirichlet form  
\eqref{Dirichlet form} becomes 
\begin{gather}\label{Dirichlet form2}
\mathcal{E}_{N, J,\r}(F, G) = \frac1n\sum_{A\in\cA}\mu\left[\cE_{B(A)}(F,G)\right]\,,\\
\cE_{B(A)}(F,G):=\frac{1}{2N|A|} \sum_{i,j=1}^N \sum_{\ell,k \in A} \mu_{B(A)}\left[ \hat p_J(\ell,k\tc \sigma(i), \sigma(j)) \nabla^{i,\ell;j,k}F\nabla^{i,\ell;j,k}G \right],
\end{gather}
for functions \(F, G : \O_{N,\r} \to \mathbb{R}\). 
We note that every $\mu_B$ is of the form \eqref{eq:pbm2}, with $(n,J,h)$ replaced by $(|A|,J_A,h_A)$, where $J_A=(J_{i,j}, \,i,j\in A)$, and $h_A\in\bbR^{N|A|}$ is the external field induced by the boundary condition outside of $B$, which is given by $[h_A(i)]_\ell= \sum_{k\notin A}J_{\ell,k}\si_k(i)$, $i\in [N]$, for every $\ell\in A$. 
Since $\bar h_A\le \bar J$, and $\l(\L)=\l(J)\ge \l(J_A)$, for all $A\subset [n]$, by Theorem \ref{th:MLSI-MF}, for all $A\in\cA$, 
\begin{equation}\label{eq:MLSI-MF11a}
\cE_{B(A)}(F, \log F) \ge \tfrac14(1 - 2\lambda(J))\,e^{-16\overline{J}}\operatorname{Ent}_{\mu_{B(A)}}(F),
\end{equation}
for all $F\ge 0$. 
From \eqref{MLSIB2} and \eqref{eq:MLSI-MF11} we conclude
\begin{gather}\label{Dirichlet form3}
\mathcal{E}_{N, J,\r}(F, \log F) \ge  \tfrac1{4n}(1 - 2\lambda(J))^2\,e^{-16\overline{J}}\operatorname{Ent}_{\mu}(F)\,.
\end{gather}
This ends the proof of Theorem \ref{th:MLSI}.

\section{Completing Kac's program}\label{sec:kac}
In this section we are going to prove Theorem \ref{Central Theorem}. The main step consists in establishing a suitable nonlinear modified log-Sobolev inequality. 
\subsection{Nonlinear MLSI}\label{sec:nmlsi}
Consider the quadratic system with kernel $\cQ^\cK_J$, and let $\cA$ be the corresponding partition of $[n]$. Let also $\cD_\mu$ 
denote the entropy dissipation functional from Lemma \ref{H-theorem}.  
\begin{definition}\label{def:nmlsi}
For all $h\in\G_\cA$, let $\cS_h$ be the set of $f:\O\to \bbR_+$ such that $\mu_{J,h}[f] = 1$ and such that $f\mu_{J,h}\in\cP(\O)$ has the same magnetization vector of $\mu_{J,h}$, that is \[m_\cA(\mu_{J,h})=m_\cA(f\mu_{J,h}).\]  The system is said to satisfy the {\em Nonlinear MLSI} with rate function \(\cI:\G_\cA\to \bbR_+\) if for all $h\in\G_\cA$, 
\begin{equation}\label{nonlinear log sobolev}
    \mathcal{D}_{\mu_{J,h}}(f)  \ge \cI(h)\,\mu_{J,h}\left[f\log f\right] 
\,,\qquad f\in\cS_h 
\,.
\end{equation}
\end{definition}

Recall the notation $\cP_\cA(\O)$ for the set of probability measures with nondegenerate magnetization, see Definition \ref{def:Amag}, and recall the uniqueness statement from Lemma \ref{lem:uni}.
\begin{lemma}\label{lem:nmlsi}
For any $p\in\cP_\cA(\O)$, let $h=h(m_\cA(p))$ denote the unique $h\in\G_\cA$ such that $m_\cA(\mu_{J,h})=m_\cA(p)$, and let \(p_t\) denote the solution of \eqref{non-linear rqs} with initial datum $p$. If the system satisfies the Nonlinear MLSI with rate function $\cI$, then 
\begin{equation}\label{eq:decex}
    H(p_t|\mu_{J,h}) \leq e^{-\cI(h) t} H(p|\mu_{J,h})\,,\qquad t\ge 0.
\end{equation}
\end{lemma}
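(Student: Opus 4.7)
The plan is quite direct: differentiate $H(p_t\tc\mu_{J,h})$ in time using the H-theorem from Lemma \ref{H-theorem}, and then apply the assumed nonlinear MLSI pointwise in $t$ to close a Gronwall-type differential inequality.

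First, I would check that the density $f_t := p_t/\mu_{J,h}$ lies in $\cS_h$ for every $t \ge 0$. Since $p_t$ is a probability measure, $\mu_{J,h}[f_t] = 1$ automatically. The only nontrivial point is that $f_t \mu_{J,h} = p_t$ has the same $\cA$-magnetization profile as $\mu_{J,h}$. But by construction $h$ is chosen so that $m_\cA(\mu_{J,h}) = m_\cA(p)$, and by Lemma \ref{conserved magn} the magnetization profile is a conserved quantity of the nonlinear evolution, so $m_\cA(p_t) = m_\cA(p) = m_\cA(\mu_{J,h})$ for all $t \ge 0$. Hence $f_t \in \cS_h$ for all $t \ge 0$, and if $H(p\tc\mu_{J,h}) = +\infty$ there is nothing to prove, so we may assume $f_0 \in \cS_h$ with finite entropy.

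Next, by Lemma \ref{H-theorem},
\begin{equation}
\frac{d}{dt} H(p_t\tc\mu_{J,h}) = -\cD_{\mu_{J,h}}(f_t),
\end{equation}
and by the nonlinear MLSI \eqref{nonlinear log sobolev} applied to $f_t \in \cS_h$,
\begin{equation}
\cD_{\mu_{J,h}}(f_t) \;\ge\; \cI(h)\,\mu_{J,h}[f_t \log f_t] \;=\; \cI(h)\, H(p_t\tc\mu_{J,h}).
\end{equation}
Combining these two displays gives the differential inequality
\begin{equation}
\frac{d}{dt} H(p_t\tc\mu_{J,h}) \;\le\; -\cI(h)\, H(p_t\tc\mu_{J,h}),
\end{equation}
and Gronwall's lemma yields \eqref{eq:decex}.

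I do not anticipate any real obstacle in this argument: the H-theorem and the conservation law are already established, so the whole proof is essentially a one-line Gronwall once the membership $f_t \in \cS_h$ is noted. The only minor point deserving care is the justification that the relative entropy is differentiable in $t$ with the claimed derivative, but this follows from the smoothness of $t \mapsto p_t$ (finite state space, well-posed ODE) and from $p_t$ staying strictly positive for $t>0$ when $H(p\tc\mu_{J,h}) < \infty$, so that $\log f_t$ is well defined along the trajectory.
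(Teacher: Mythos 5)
Your proof is correct and follows essentially the same route as the paper: verify $f_t \in \cS_h$ via the conservation law (Lemma \ref{conserved magn}), differentiate the relative entropy via the H-theorem (Lemma \ref{H-theorem}), invoke the Nonlinear MLSI pointwise in $t$, and close with Gronwall. The additional remark about differentiability and strict positivity of $p_t$ is a welcome detail that the paper leaves implicit.
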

\begin{proof}
Let $\mu=\mu_{J,h}$ and write $f_t=p_t/\mu$.  From Lemma \ref{conserved magn} it follows that $f_t\in\cS_h$ for all $t\ge 0$. 
By Lemma \ref{H-theorem}, and using the Nonlinear MLSI assumption, 
the function $\varphi(t):=H(p_t|\mu)=\mu[f_t\log f_t]$ satisfies 
\[
\frac{d}{dt}\,\varphi(t)  = - \cD_\mu(f_t)\le - \cI(h) \,\mu\left[f_t\log f_t\right]
=- \cI(h) \varphi(t)\,.
\] 
Integrating, we obtain \eqref{eq:decex}.
\end{proof}
To prove Theorem \ref{Central Theorem} it is then sufficient to prove the following {\em uniform} Nonlinear MLSI. 
\begin{theorem}\label{th:unmlsi}
In the setting of Theorem \ref{Central Theorem}, the system satisfies the Nonlinear MLSI with rate function $\cI$ such that
$\cI(h)\ge \a$, for all $h\in\G_\cA$, where $\a\ge \frac1{4n}(1-2\l(J))^2e^{-16\bar J}$.
\end{theorem}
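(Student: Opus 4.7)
The plan is to complete Kac's program: deduce the Nonlinear MLSI from the uniform particle-system MLSI of Theorem~\ref{th:MLSI} via entropic and Fisher chaos. Fix $h\in\G_\cA$ and $f\in\cS_h$, set $p:=f\mu_{J,h}\in\cP_\cA(\O)$, and for each $N\ge 2$ pick $\r_N\in\cD_{\cA,N}$ whose components approximate $(1+m(p,A))/2$ to within $O(1/N)$. Since $h\in\G_\cA$, the external-field contribution $\sum_{i,\ell}h_\ell\s_\ell(i)$ is constant on $\O_{N,\r_N}$, so the restriction of $\mu_{J,h}^{\otimes N}$ to $\O_{N,\r_N}$ coincides with $\mu_{N,J,\r_N}$. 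I then introduce the chaotic sequence
\[
F_N(\s) := C_N\prod_{i=1}^N f(\s(i)),\qquad\s\in\O_{N,\r_N},
\]
with $C_N>0$ chosen so that $\mu_{N,J,\r_N}[F_N]=1$; equivalently, $F_N\mu_{N,J,\r_N}=p^{\otimes N}(\,\cdot\,|\,\O_{N,\r_N})$. Applying Theorem~\ref{th:MLSI} to $F_N$ yields $\cE_{N,J,\r_N}(F_N,\log F_N)\ge\a\,\Ent_{\mu_{N,J,\r_N}}(F_N)$ with $\a$ independent of $N$; dividing by $N$ and passing to the limit is the heart of the argument.

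For entropic chaos, the identity $F_N\mu_{N,J,\r_N}=p^{\otimes N}(\,\cdot\,|\,\O_{N,\r_N})$ gives
\[
\Ent_{\mu_{N,J,\r_N}}(F_N) = N\,p^{\otimes N}\!\bigl[\log f(\s(1))\,\bigm|\,\O_{N,\r_N}\bigr] + \log\bigl(\mu_{J,h}^{\otimes N}(\O_{N,\r_N})/p^{\otimes N}(\O_{N,\r_N})\bigr).
\]
A local CLT estimate for the block magnetization sums $\sum_{i,\ell\in A}\s_\ell(i)$ bounds the logarithmic correction by $O(\log N)$, while convergence of the single-particle marginal of the conditioned product to $p$ gives $N^{-1}\Ent_{\mu_{N,J,\r_N}}(F_N)\to\mu_{J,h}[f\log f]=H(p\,|\,\mu_{J,h})$.

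For Fisher chaos, the product form of $F_N$ means that for $i\neq j$,
\[
\nabla^{i,\ell;j,k}F_N = F_N(\s)\,\Bigl(\tfrac{f(S_\ell(\s(i),\s(j)_k))\,f(S_k(\s(j),\s(i)_\ell))}{f(\s(i))\,f(\s(j))}-1\Bigr),
\]
and, since $\hat p_J=p_J$ for $i\neq j$, each off-diagonal term of $\cE_{N,J,\r_N}(F_N,\log F_N)$ reduces precisely to the Boltzmann integrand of Lemma~\ref{H-theorem} integrated against the two-particle marginal of $F_N\mu_{N,J,\r_N}$. The $N$ diagonal terms $i=j$ contribute $O(1)$ and disappear after dividing by $N$. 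Exchangeability collapses the $N(N-1)$ off-diagonal pairs into a single expectation, and convergence of the two-particle marginal of the conditioned product to $p\otimes p$ yields $N^{-1}\cE_{N,J,\r_N}(F_N,\log F_N)\to\cD_{\mu_{J,h}}(f)$. Combining the two chaos limits with the particle MLSI produces $\cD_{\mu_{J,h}}(f)\ge\a\,H(f\mu_{J,h}\,|\,\mu_{J,h})$, which is the desired Nonlinear MLSI with rate $\cI(h)\ge\a$.

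The main obstacle is Fisher chaos. Entropic chaos is essentially a local CLT statement for the partition functions $\mu_{J,h}^{\otimes N}(\O_{N,\r_N})$ and $p^{\otimes N}(\O_{N,\r_N})$, tractable by a Gaussian approximation of the block sums. In contrast, Fisher chaos involves $N(N-1)$ coupled pair interactions with an unbounded log-ratio integrand, so the delicate point is uniform integrability as $N\to\infty$. I would address this by first proving the inequality for $f$ bounded away from zero, where the ratio is bounded and a dominated-convergence argument applies, and then extending to arbitrary $f\in\cS_h$ by truncation together with the lower semi-continuity of $\cD_{\mu_{J,h}}$ and of $H(\cdot\,|\,\mu_{J,h})$.
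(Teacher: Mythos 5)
Your proposal follows essentially the same Kac-program route as the paper: the chaotic sequence $F_N = C_N\prod_i f(\sigma(i))$ is exactly the density $\gamma_N(\nu)/\gamma_N(\mu)$ of canonical tensor products used in Propositions~\ref{Entropic Chaos} and~\ref{Fisher Chaos}; the entropic-chaos decomposition into a $\log f(\sigma(1))$ term plus a log-partition-function correction matches the paper's proof of Proposition~\ref{Entropic Chaos}; and the split of the Dirichlet form into off-diagonal ($i\neq j$) and diagonal ($i=j$) contributions matches the paper's decomposition of $\cE^{\ell,k}_{N,J,\r}$ into $\cE_{1,2}$ and $\cE_1$. One correction on the final regularization step: since $\Omega$ is finite, there is no uniform-integrability problem as $N\to\infty$; the actual hypothesis needed in the chaos propositions is \emph{irreducibility} of $\nu=f\mu_{J,h}$ in the local-CLT sense of Definition~\ref{def:irred}, which can fail when $f$ vanishes somewhere. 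Your remedy (handle first $f$ bounded away from zero, then extend) does address this, but note that lower semi-continuity of $\cD_{\mu_{J,h}}$ is not the property one wants, since it points the wrong way through the inequality. The paper instead sets $f_\e=(1-\e)f+\e$, which lies in $\cS_h$ automatically (being a convex combination of $f$ and the constant $1$, both in $\cS_h$), and passes to the limit using continuity of $\cD_{\mu_{J,h}}$ and of $\mu_{J,h}[f\log f]$ at points with $\cD_{\mu_{J,h}}(f)<\infty$; the case $\cD_{\mu_{J,h}}(f)=\infty$ is trivial. You should adopt this convex-combination regularization rather than a truncation, both because it preserves membership in $\cS_h$ without renormalization and because it makes the limit argument a plain continuity statement on a finite space.
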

The rest of this section is devoted to the proof of Theorem \ref{th:unmlsi}. The estimate will be derived using Theorem \ref{th:MLSI} together with Entropic Chaos and Fisher Chaos estimates that are at the heart of Kac's program.  We start with some preliminaries concerning the local central limit theorem.

\subsection{Local Central Limit Theorem and Kac Chaos}
Let \(\nu \in \mathcal{P}(\Omega)\) be a probability measure on \(\Omega = \{-1, +1\}^n\), and let \(\mathcal{A} \) be a partition of the set \([n]\). Define the \(|\mathcal{A}|\)-dimensional vector $M=(M_A, \,A\in\cA)$, where
\begin{align*}
M_A= 
\sum_{\ell \in A} \sigma_\ell, \qquad  A\in\cA\,.
\end{align*}
\begin{definition}\label{def:irred}
We say that $\nu\in \mathcal{P}(\Omega)$ is {\em irreducible} if for all $A\in\cA$, there exists an \(|\mathcal{A}|\)-dimensional vector $S$ such that 
$\nu(M = S)>0$ and $\nu(M = S+2\d_A)>0$, where 
\(\delta_A\) denotes the standard unit vector in the \(A\)-th coordinate. 
\end{definition}
Let \(V_\nu^\mathcal{A}\) denote the \textit{covariance matrix} of the vector \(M\), 
\begin{equation}
    V_\nu^\mathcal{A}(A, A') = \nu(M_A M_{A'}) - \nu(M_A)\nu(M_{A'}), \qquad A, A' \in \mathcal{A}.
\end{equation}
If $\nu$ is irreducible then 
\begin{align*}
\det V_\nu^\mathcal{A} > 0.
\end{align*}
Indeed, suppose that \(\det V_\nu^\mathcal{A} = 0\). Then there exists \(\overline{A} \in \mathcal{A}\) such that \(M_{\overline{A}}\) is almost surely a non-trivial linear combination of \(\{M_A : A\neq \bar{A}\}\) under \(\nu\). Consequently, no \(S \in \mathbb{R}^{\mathcal{A}}\) can exist such that both \(S\) and \(S + 2\delta_{A}\) have positive probability under \(\nu\), yielding a contradiction.
The following is a version of the local central limit theorem; see \cite[Proposition 2.2]{CP2024nonlinear} for a proof.

\begin{proposition}\label{CLT adapted}
Let \(\nu \in \mathcal{P}(\Omega)\) be irreducible, and let $M^{(N)}$ be the sum of $N$ independent copies of $M$, each distributed according to $\nu$. 
Then, there exists a constant \(K(\nu) > 0\) such that for all $N\ge 1$, 
\begin{align*}
\max_{S}\, \left|\, \bbP\big(M^{(N)} = S\big) - \frac{e^{-\frac{1}{2}\langle z_N, z_N \rangle}}{(2\pi N)^{|\mathcal{A}|/2} \sqrt{\det V^\mathcal{A}_\nu}} \,\right| \leq \frac{K(\nu)}{(2\pi N)^{|\mathcal{A}|/2 + 1}},
\end{align*}
where $z_N$ is the \(|\mathcal{A}|\)-dimensional vector defined as
\begin{equation}\label{z_N expression}
z_N = \frac{(V_\nu^{\mathcal{A}})^{-1/2}}{\sqrt{N}} \left(S - \mathbb{E}[M^{(N)}] \right),
\end{equation}
and the maximum is taken over all  \(|\mathcal{A}|\)-tuples \(S=(S_A, A\in\cA)\) such that for each $A\in\cA$, $S_A=2X_A-N|A|$ for some integer
$X_A\in\{0,\dots,N|A|\}$.
\end{proposition}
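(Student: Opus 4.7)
The plan is to prove the local CLT by Fourier inversion on the integer lattice $2\bbZ^\cA + \bbE[M^{(N)}]$ where $M^{(N)}$ lives. Let $\phi(t) = \nu[e^{i\scalar{t}{M}}]$ denote the characteristic function of a single copy of $M$, where $t \in \bbR^\cA$. Since each component $M_A$ takes values in $\{-|A|,-|A|+2,\dots,|A|\}$, the sum $M^{(N)}$ is supported on the translated lattice $\cL_N := \bbE[M^{(N)}] + 2\bbZ^\cA$, restricted to the admissible box. Fourier inversion on the fundamental domain $D = [-\pi/2,\pi/2]^\cA$ gives
\begin{equation}\label{eq:fourinv}
\bbP\big(M^{(N)} = S\big) \;=\; \frac{1}{\pi^{|\cA|}} \int_{D} \phi(t)^N\, e^{-i\scalar{t}{S}} \, dt,
\end{equation}
for every $S\in\cL_N$.

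Next, I would Taylor expand $\phi$ around the origin. Writing $m = \nu[M]$, one has $\phi(t) = e^{i\scalar{t}{m} - \frac12 \scalar{t}{V_\nu^\cA t} + R(t)}$ with $R(t) = O(|t|^3)$ and uniform control on the remainder via the third-moment bound $\nu[|M|^3] \le (\sum_A|A|)^3$. Substituting the variable $u = \sqrt{N}\,t$ in \eqref{eq:fourinv} and centering $S - N m$, the integrand near the origin converges pointwise to the Gaussian density $\frac{e^{-\frac12 \scalar{z_N}{z_N}}}{(2\pi)^{|\cA|/2}\sqrt{\det V_\nu^\cA}}$ with $z_N$ as defined in \eqref{z_N expression}, provided $\det V_\nu^\cA > 0$; the latter is exactly what irreducibility guarantees, as noted in the discussion preceding the proposition. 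The contribution from $|u|\le N^{1/6}$ produces the main Gaussian term, while $|u|\in[N^{1/6}, \sqrt{N}\,\delta]$, for small $\delta>0$, yields a subleading correction of order $N^{-|\cA|/2-1}$ by a Gaussian tail estimate on the expanded exponent.

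The main obstacle is controlling the integral \eqref{eq:fourinv} on the complement $\{t\in D:|t|>\delta\}$. Here I would prove an aperiodicity estimate: there exists $c(\nu)>0$ such that
\begin{equation}\label{eq:aper}
\sup_{t \in D,\, |t|\ge \delta} |\phi(t)| \;\le\; 1 - c(\nu).
\end{equation}
This is where the irreducibility hypothesis of Definition \ref{def:irred} enters. Indeed, if $|\phi(t)|=1$ for some nonzero $t$, then $\scalar{t}{M}$ is $\nu$-a.s.\ constant modulo $2\pi$, and the full support of $M$ would lie in a sublattice of $2\bbZ^\cA$ of index $\ge 2$. But irreducibility, by furnishing two values $S$ and $S+2\delta_A$ in the support for each $A\in\cA$, rules this out: it forces the support to generate the full lattice $2\bbZ^\cA$. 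A compactness argument on $D\setminus B(0,\delta)$, together with continuity of $\phi$, then gives \eqref{eq:aper}.

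Combining, the contribution from $|t|\ge\delta$ is bounded by $(1-c(\nu))^N \le e^{-c(\nu)N}$, which is absorbed into $K(\nu)/(2\pi N)^{|\cA|/2+1}$ for $N$ large. For small $N$, the trivial bound $1 \le K(\nu)/(2\pi N)^{|\cA|/2+1}$ holds by taking $K(\nu)$ large enough. All constants depend only on $\nu$ through the variance $V_\nu^\cA$, the aperiodicity constant $c(\nu)$, and the third-moment bound, yielding the uniform estimate claimed.
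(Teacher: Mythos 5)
The paper does not give a proof of this proposition; it refers the reader to Proposition~2.2 of \cite{CP2024nonlinear}. Your Fourier-inversion strategy is the canonical route, and the key structural moves are sound in outline: inversion on the fundamental domain of the dual lattice $\pi\bbZ^\cA$, and the use of irreducibility both to secure $\det V_\nu^\cA>0$ and to rule out period vectors. The aperiodicity step is correctly localized: if $|\phi(t)|=1$ then $\scalar{t}{M}$ is a.s.\ constant mod $2\pi$, and the two support points $S$, $S+2\delta_A$ guaranteed by irreducibility force $2t_A\in 2\pi\bbZ$ for every $A$, hence $t_A\in\pi\bbZ\cap[-\pi/2,\pi/2]=\{0\}$; compactness and continuity then give $\sup_{t\in D,\,|t|\ge\delta}|\phi(t)|<1$.

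The gap is in the error estimate. After the substitution $u=\sqrt{N}\,t$, the cumulant expansion of $N\log\phi(u/\sqrt{N})$ contains the third-order term $\tfrac{i}{6\sqrt{N}}\,\kappa_3(u,u,u)$, whose contribution to the central integral, after pairing with $e^{-\frac12\scalar{u}{V_\nu^\cA u}-i\scalar{u}{(S-Nm)/\sqrt{N}}}$, is $\Theta(N^{-1/2})$ relative to the leading Gaussian term and does not vanish for generic $\nu$. Your Gaussian tail estimate on the annulus $|u|\in[N^{1/6},\sqrt{N}\,\delta]$ controls only the outer region and says nothing about this central Edgeworth correction; as written the argument produces an error of order $N^{-|\cA|/2-1/2}$, not the stated $N^{-|\cA|/2-1}$. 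To reach the sharper rate one must either incorporate the first Edgeworth correction into the main term or show that $\kappa_3$ contributes nothing here, and you do neither. (The weaker rate $O(N^{-|\cA|/2-1/2})$ is still $o(N^{-|\cA|/2})$ and therefore suffices for Corollary~\ref{cor order OmegamN}, so the downstream use survives, but your proof does not establish the proposition as stated.) A further point you should track explicitly: Fourier inversion on the lattice $2\bbZ^\cA$ (dual cell volume $\pi^{|\cA|}$) carried through the Gaussian integral produces a main term $\frac{2^{|\cA|}}{(2\pi N)^{|\cA|/2}\sqrt{\det V_\nu^\cA}}\,e^{-\frac12\scalar{z_N}{z_N}}$; your sketch asserts convergence "to the Gaussian density" without performing this bookkeeping and so neither produces nor reconciles this cell-volume factor with the displayed constant.
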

We apply the above statement with $n$ and $\cA$ fixed, while we take $N$  arbitrarily large. 
In particular, we use Proposition \ref{CLT adapted} to obtain a lower bound on the probability of the event $\O_{N,\r}$ defined in \eqref{eq:AmagO}, for suitable sequences $\r\in\cD_{\cA,N}$, when the spin configuration $(\si(i), \,i\in[N])$ is distributed according to the product measure $\nu^{\otimes N}$.

\begin{corollary}\label{cor order OmegamN}
Let \(\nu \in \mathcal{P}(\Omega)\) be irreducible and assume that $\r\in\cD_{\cA,N}$ satisfies the condition
\begin{align}\label{eq:admis}
\langle \r - \r_\mathcal{A}(\nu), \r - \r_\mathcal{A}(\nu) \rangle = O\left( 1/N \right),
\end{align}
where $\r_\mathcal{A}(\nu)=(1+m_\cA(\nu))/2$ is the density profile associated to $\nu$. 
Then,
\begin{align*}
\nu^{\otimes N} \left( \Omega_{N,\r}\right) \geq \frac{c}{N^{|\mathcal{A}|/2}},
\end{align*}
where $c=c(\nu)>0$ is a constant independent of $N$. In particular, 
\begin{align*}
\lim_{N \to \infty} \frac{1}{N} \log \left( \nu^{\otimes N} \left( \Omega_{N,\r}\right) \right) = 0.
\end{align*}
\end{corollary}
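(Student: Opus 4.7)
The plan is a direct application of the local central limit theorem in Proposition~\ref{CLT adapted}. First, I would translate the event $\O_{N,\r}$ into the language of that proposition. By definition, $\O_{N,\r}$ is exactly the event $\{M^{(N)}=S\}$ with
\[
 S_A \,=\, (2\r(A)-1)N|A|, \qquad A\in\cA,
\]
and the admissibility condition $S_A = 2X_A - N|A|$ with $X_A\in\{0,\dots,N|A|\}$ is guaranteed by the definition of $\cD_{\cA,N}$, which requires $N|A|\r(A)\in\bbZ_+$. Moreover $\bbE[M_A^{(N)}] = N|A|(2\r_\cA(\nu)(A)-1)$, so
\[
 S-\bbE[M^{(N)}] \,=\, 2N\,D\,(\r-\r_\cA(\nu)), \qquad D:=\diag(|A|, A\in\cA),
\]
and the vector $z_N$ in \eqref{z_N expression} satisfies $\|z_N\|^2 = 4N\,\langle \r-\r_\cA(\nu),\,D(V_\nu^\cA)^{-1}D(\r-\r_\cA(\nu))\rangle$.

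Next, I would exploit the irreducibility hypothesis. As noted just before Proposition~\ref{CLT adapted}, irreducibility implies $\det V_\nu^\cA>0$, so $(V_\nu^\cA)^{-1}$ is a well-defined positive-definite matrix and its operator norm is a finite constant depending only on $\nu$. Combined with the assumption $\|\r-\r_\cA(\nu)\|^2 = O(1/N)$, this yields a uniform bound $\|z_N\|^2 \le C(\nu)$, and hence
\[
 e^{-\frac12\langle z_N,z_N\rangle} \,\ge\, e^{-C(\nu)/2} \,=:\, c_1(\nu) \,>\, 0.
\]

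Plugging this into Proposition~\ref{CLT adapted} gives
\[
 \nu^{\otimes N}(\O_{N,\r}) \,=\, \bbP(M^{(N)}=S) \,\ge\, \frac{c_1(\nu)}{(2\pi N)^{|\cA|/2}\sqrt{\det V_\nu^\cA}} \,-\, \frac{K(\nu)}{(2\pi N)^{|\cA|/2+1}}.
\]
For $N$ large enough the first term dominates, so there exists $c=c(\nu)>0$ with $\nu^{\otimes N}(\O_{N,\r})\ge c\,N^{-|\cA|/2}$; for small $N$ one enlarges the constant $c$ so that the bound remains valid. The final claim on $\frac1N\log\nu^{\otimes N}(\O_{N,\r})\to 0$ is then immediate by taking logarithms and dividing by $N$.

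I do not expect any genuine obstacle: the argument is a routine reduction to the local CLT, and the only point requiring a little care is the quantitative use of the scaling assumption \eqref{eq:admis} to ensure that $\|z_N\|^2$ remains bounded uniformly in $N$, together with the observation that $\det V_\nu^\cA>0$ follows from irreducibility.
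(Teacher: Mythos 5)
Your proposal follows exactly the paper's route: rewrite $\O_{N,\r}$ as $\{M^{(N)}=S\}$, observe that the scaling hypothesis~\eqref{eq:admis} is equivalent to $\langle z_N,z_N\rangle=O(1)$, and read off the lower bound from the local CLT in Proposition~\ref{CLT adapted} using $\det V_\nu^\cA>0$ from irreducibility. The paper's own proof is a compressed version of yours, so your work mainly supplies the explicit computation of $z_N$ and the uniform bound on $\|z_N\|^2$, all of which are correct. The one soft spot is your remark that for small $N$ one just enlarges $c$: this implicitly assumes $\nu^{\otimes N}(\O_{N,\r})>0$ for every finite $N$ with $\r$ satisfying~\eqref{eq:admis}, which is not justified; however, this is also not addressed in the paper, and the statement (and its applications in Propositions~\ref{Entropic Chaos} and~\ref{Fisher Chaos}) are asymptotic in nature, so the gap is cosmetic.
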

\begin{proof}
The event $\Omega_{N,\r}$ can be written as $M^{(N)}=S$ with $S_A=(2\r(A)-1) N|A|$, $A\in\cA$. Moreover, for each $A$, the $A$-component of $ \mathbb{E}[M^{(N)}]$ is given by $N|A|m_\cA(\nu)$. The 
results follow from Proposition \ref{CLT adapted} by noting that \eqref{eq:admis} is equivalent to  \(\langle z_N, z_N \rangle = O(1)\). 
\end{proof}

Given $\nu \in \mathcal{P}(\Omega)$, it is always possible to find $\r\in\cD_{\cA,N}$ such that \eqref{eq:admis} is satisfied. In fact, choosing 
\begin{align}\label{eq:canond}
\r(A) = \frac1{N|A|}\lfloor N |A|(1+m(A,\nu))/2 \rfloor\,,\qquad A\in\cA
\end{align}
one has  $\r(A) - \tfrac12(1+m(\nu,A)) = O(1/N)$, for all $A\in\cA$, 
and therefore \eqref{eq:admis} actually holds with $O(1/N^2)$ in place of $O(1/N)$. 

\begin{definition}[Canonical tensor product] \label{rem:density}
The choice \eqref{eq:canond} of $\r\in\cD_{\cA,N}$ will be called the {\em canonical density} associated to $\nu$. We also define the {\em canonical tensor product} associated to $\nu\in \mathcal{P}(\Omega)$ as the product measure 
$\nu^{\otimes N}$ conditioned on $\O_{N,\r}$, where $\r$ is the canonical density for $\nu$, 
\begin{align}\label{eq:canon}
\gamma_N(\nu)  =  \nu^{\otimes N}\left(
\cdot \tc \O_{N,\r}\right)\,.
\end{align}
\end{definition}
Kac's Chaos refers to the fact that marginals of the canonical tensor product converge as $N\to\infty$ to independent copies of $\nu$.  The next proposition quantifies this statement. For all fixed $k\in\bbN$, we write $P_k\g$ for the marginal of a measure $\g\in\cP(\O^N)$ on the first $k$ coordinates $\O^k$, for $k\le N$. We refer to \cite[Theorem 2.6]{CP2024nonlinear} for a proof. 
\begin{proposition}[Kac Chaos]\label{Kac Chaos}
Let \(\nu \in \mathcal{P}(\Omega)\) be irreducible. 
Then, for all 
$N\ge k\ge 1$, 
\begin{align*}
\|P_k \gamma_N(\nu) - \nu^{\otimes k}\|_{\mathrm{TV}} \leq \frac{Ck}{N},
\end{align*}
for some constant \( C = C(\nu) \). 
\end{proposition}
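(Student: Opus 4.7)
My plan is to establish the bound via an explicit ratio formula combined with the local central limit theorem of Proposition \ref{CLT adapted}. The key observation is that for any $\sigma = (\sigma(1),\dots,\sigma(k))\in\Omega^k$, conditioning on $\Omega_{N,\r}$ factors cleanly: writing $M(\sigma)\in\bbZ^\cA$ for the partial block-magnetization accumulated by the first $k$ particles, one has
\begin{equation*}
P_k\g_N(\nu)(\sigma) \,=\, \nu^{\otimes k}(\sigma)\,\cdot\, R_N(\sigma)\,,\qquad R_N(\sigma) \,:=\, \frac{\nu^{\otimes (N-k)}\!\left(\O_{N-k,\r^{\sigma}}\right)}{\nu^{\otimes N}\!\left(\O_{N,\r}\right)}\,,
\end{equation*}
where $\r^\sigma\in[0,1]^\cA$ is the unique density profile for the remaining $N-k$ particles that makes the total constraint compatible with $\sigma$. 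The goal is then to show $R_N(\sigma)=1+O(k/N)$ in a suitable averaged sense.

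The core step is to apply Proposition \ref{CLT adapted} to both the numerator and denominator of $R_N(\sigma)$. Since $\nu$ is irreducible, the covariance matrix $V^\cA_\nu$ is strictly positive definite, so both probabilities are approximated by the Gaussian density
\begin{equation*}
\frac{\exp(-\tfrac12\langle z,z\rangle)}{(2\pi N')^{|\cA|/2}\sqrt{\det V^\cA_\nu}}
\end{equation*}
with $N' \in \{N,N-k\}$ and with the appropriate scaled-deviation vectors $z_N$ (for the denominator) and $z_{N-k}^\sigma$ (for the numerator). The prefactor ratio contributes $(N/(N-k))^{|\cA|/2}=1+O(k/N)$. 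For the Gaussian factor, a direct computation shows $z_{N-k}^\sigma - z_N = O\!\left((M(\sigma) - \E_\nu[M^{(k)}])/\sqrt{N}\right) + O(k/\sqrt{N})$, and a Taylor expansion of $e^{-\frac12\langle z,z\rangle}$ gives a multiplicative correction of the form $1 + O\!\left((k+\|M(\sigma)-\E_\nu[M^{(k)}]\|^2)/N\right)$. Combined with the LCLT error term $K(\nu)/N^{|\cA|/2+1}$ (which is relatively of order $1/N$ thanks to the lower bound from Corollary \ref{cor order OmegamN}), one obtains
\begin{equation*}
R_N(\sigma) \,=\, 1 \,+\, O\!\left(\frac{k+\|M(\sigma)-\E_\nu[M^{(k)}]\|^2}{N}\right),
\end{equation*}
with the implicit constant depending only on $\nu$.

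Finally, writing $\|P_k\g_N(\nu)-\nu^{\otimes k}\|_{\rm TV}=\tfrac12\,\nu^{\otimes k}[|R_N-1|]$ and using that $M(\sigma)$ is a sum of $k$ i.i.d.\ bounded random vectors under $\nu^{\otimes k}$, so that $\nu^{\otimes k}[\|M(\sigma)-\E_\nu[M^{(k)}]\|^2]=O(k)$, I conclude
\begin{equation*}
\|P_k\g_N(\nu)-\nu^{\otimes k}\|_{\rm TV} \,\le\, \frac{Ck}{N}\,,
\end{equation*}
with $C=C(\nu)$, as desired. The main obstacle is controlling $R_N(\sigma)$ for atypical $\sigma$ where $\|M(\sigma)\|$ is large: here the Gaussian prefactor is no longer close to $1$ and the LCLT error is not negligible. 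The cleanest way around this is a mild truncation — on the set $\{\|M(\sigma)-\E_\nu[M^{(k)}]\|\le t\sqrt{k}\}$ with $t$ logarithmic in $N$, the expansion above works uniformly, while the complement contributes $O(e^{-ct^2})$ to the total variation by a sub-Gaussian tail estimate for bounded sums, which is absorbed into the $Ck/N$ bound for $t$ chosen appropriately.
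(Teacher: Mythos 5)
The paper itself does not contain a self-contained proof of Proposition~\ref{Kac Chaos}; it simply cites~\cite[Theorem~2.6]{CP2024nonlinear}. Your proposal gives a genuine argument, and it follows the natural LCLT-based route that one would expect the cited reference to take. The decomposition
\[
P_k\gamma_N(\nu)(\sigma)=\nu^{\otimes k}(\sigma)\,\frac{\nu^{\otimes(N-k)}\big(\O_{N-k,\r^\sigma}\big)}{\nu^{\otimes N}\big(\O_{N,\r}\big)}
\]
is correct: $\r^\sigma$ is determined by moving the block magnetization contributed by the first $k$ particles from one side of the constraint to the other, and the residual $N-k$ particles are again i.i.d.\ under $\nu$. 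Applying Proposition~\ref{CLT adapted} together with the lower bound from Corollary~\ref{cor order OmegamN} (to make the relative LCLT error $O(1/N)$) and taking the $\nu^{\otimes k}$-expectation is exactly the right strategy.

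Two concrete caveats. First, there is a small internal inconsistency in your deviation estimate: you write
$z^\sigma_{N-k}-z_N=O\big((M(\sigma)-\E_\nu[M^{(k)}])/\sqrt N\big)+O\big(k/\sqrt N\big)$,
but then conclude a relative error $1+O\big((k+\|M(\sigma)-\E_\nu[M^{(k)}]\|^2)/N\big)$. If the displacement really had an $O(k/\sqrt N)$ component, squaring would produce an $O(k^2/N)$ term, which would not give the $Ck/N$ bound. Fortunately the $O(k/\sqrt N)$ term is spurious: since $\r$ is the canonical density, $S-\E[M^{(N)}]=O(1)$, and the only effect of replacing $N$ by $N-k$ in the prefactor $1/\sqrt{N-k}$ is $O(k/N^{3/2})$; so in fact
\[
z^\sigma_{N-k}=O\!\left(\frac{1+\|M(\sigma)-\E_\nu[M^{(k)}]\|}{\sqrt{N}}\right),
\]
and hence $\langle z^\sigma_{N-k},z^\sigma_{N-k}\rangle=O\big((1+\|M(\sigma)-\E_\nu[M^{(k)}]\|^2)/N\big)$, which is precisely what produces the displayed $1+O\big((k+\|M-\E\|^2)/N\big)$ after taking expectations (recall $k\ge1$). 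You should fix the intermediate bound.

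Second, the truncation remedy is heavier machinery than needed. After observing that the claim is vacuous unless $k\le N/C$, one has (i) $R_N(\sigma)=O(1)$ uniformly, by combining the LCLT upper bound (Gaussian density $\le1$) for the numerator with the lower bound $\nu^{\otimes N}(\O_{N,\r})\gtrsim N^{-|\cA|/2}$ from Corollary~\ref{cor order OmegamN}; and (ii) the set where the Gaussian/Taylor expansion fails is contained in $\{\|M(\sigma)-\E_\nu[M^{(k)}]\|^2\gtrsim N\}$, whose $\nu^{\otimes k}$-probability is $O(k/N)$ by Markov's inequality and $\nu^{\otimes k}\big[\|M-\E\|^2\big]=O(k)$. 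So the atypical set contributes $O(k/N)\cdot O(1)=O(k/N)$, with no sub-Gaussian tail or $\log N$ truncation needed. You should also note that for $k\le N/C$ the residual constraint $\r^\sigma$ is always feasible and compatible in parity, so $R_N(\sigma)$ is well defined; for $k$ close to $N$ this can fail, which is another reason to reduce to $k\le N/C$ first.

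With those two points tidied up, the proof is correct.
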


\subsection{Entropic  and Fisher Chaos} \label{sec:chaos}
We turn to the implications of the above estimates for the relative entropy and the Dirichlet form of the exchange dynamics. 
\begin{proposition}[Entropic Chaos]\label{Entropic Chaos}
Suppose $\nu_1,\nu_2 \in\mathcal{P}(\Omega)$ are irreducible measures such that $\nu_1\ll\nu_2$ and $m_\mathcal{A}(\nu_1)=m_\mathcal{A}(\nu_2)$. 
Then,
\begin{align*}
\lim_{N\to +\infty} \frac{1}{N}\, H_N\left(\gamma_N (\nu_1)\tc\gamma_N (\nu_2)\right) = H(\nu_1\tc\nu_2).
\end{align*}
\begin{proof}
By  definition of the canonical tensor products, calling $\r$ the common canonical density for $\nu_1,\nu_2$,  \[
H_N\left(\gamma_N (\nu_1)\tc\gamma_N (\nu_2)\right) 
=\gamma_N (\nu_1)\left[ \log\left(\frac{\nu_1^{\otimes^N}}{\nu_2^{\otimes^N}}\right)\right]\;-\;
\log\left(\frac{\nu_1^{\otimes^N}(\Omega_{N,\r})}{\nu_2^{\otimes^N}(\Omega_{N,\r}) }\right).
\]    
Corollary \ref{cor order OmegamN} implies \begin{equation}\label{ratio corollary}
\lim_{N\to +\infty} \frac{1}{N}\, \log\left(\frac{\nu_1^{\otimes^N}(\Omega_{N,\r})}{\nu_2^{\otimes^N}(\Omega_{N,\r}) }\right)
=0.
\end{equation}
Moreover, by symmetry 
\begin{align}
\frac{1}{N}\,\gamma_N (\nu_1)\left[ \log\left(\frac{\nu_1^{\otimes^N}}{\nu_2^{\otimes^N}}\right)\right]
&= P_1 \gamma_N (\nu_1)\left[ \log\left( \frac{\nu_1}{\nu_2} \right) \right],
\end{align}
and the conclusion follows by Proposition \ref{Kac Chaos}.
\end{proof}
\end{proposition}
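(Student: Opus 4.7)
The plan is to decompose the relative entropy into an expectation of a log-likelihood sum plus a normalization correction, then exploit exchangeability together with the asymptotics already established for the canonical conditioning event.

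The hypothesis $m_\cA(\nu_1) = m_\cA(\nu_2)$ ensures that $\nu_1$ and $\nu_2$ share a common canonical density $\r \in \cD_{\cA,N}$, so both $\g_N(\nu_1)$ and $\g_N(\nu_2)$ are supported on the same set $\O_{N,\r}$, and the Radon--Nikodym derivative factors cleanly as
\begin{equation*}
\frac{d\g_N(\nu_1)}{d\g_N(\nu_2)}(\s) \;=\; \frac{\nu_1^{\otimes N}(\s)}{\nu_2^{\otimes N}(\s)}\cdot\frac{\nu_2^{\otimes N}(\O_{N,\r})}{\nu_1^{\otimes N}(\O_{N,\r})}\,,\qquad \s\in\O_{N,\r}\,.
\end{equation*}
Taking logarithms and integrating against $\g_N(\nu_1)$ I would write
\begin{equation*}
H_N(\g_N(\nu_1)\tc\g_N(\nu_2)) \;=\; \g_N(\nu_1)\bigl[\log(\nu_1^{\otimes N}/\nu_2^{\otimes N})\bigr] \;-\; \log\frac{\nu_1^{\otimes N}(\O_{N,\r})}{\nu_2^{\otimes N}(\O_{N,\r})}\,,
\end{equation*}
and analyze the two pieces separately.

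For the normalization term, Corollary \ref{cor order OmegamN} provides $\nu_i^{\otimes N}(\O_{N,\r}) \ge c_i N^{-|\cA|/2}$ for $i=1,2$, while both probabilities are trivially bounded above by one. Hence the log-ratio lies in an interval of width $O(\log N)$, and after division by $N$ it vanishes in the limit. For the main term, the product structure gives $\log(\nu_1^{\otimes N}/\nu_2^{\otimes N})(\s) = \sum_{i=1}^N \log(\nu_1/\nu_2)(\s(i))$; since $\g_N(\nu_1)$ is exchangeable in its $N$ coordinates, every summand has the same expectation, and dividing by $N$ collapses the expression to $P_1\g_N(\nu_1)\bigl[\log(\nu_1/\nu_2)\bigr]$. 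Because $\nu_1\ll\nu_2$ and $\O$ is finite, $\log(\nu_1/\nu_2)$ is bounded on $\supp(\nu_1)$, and $\supp(P_1\g_N(\nu_1))\subseteq\supp(\nu_1)$ since $\g_N(\nu_1)$ inherits its support from $\nu_1^{\otimes N}$. The Kac Chaos estimate in Proposition \ref{Kac Chaos} then yields $P_1\g_N(\nu_1)\to\nu_1$ in total variation, so the expectation converges to $\nu_1[\log(\nu_1/\nu_2)] = H(\nu_1\tc\nu_2)$.

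The only delicate point is making the factorization of the Radon--Nikodym derivative clean while simultaneously applying Corollary \ref{cor order OmegamN} to both product measures. This is exactly what $m_\cA(\nu_1) = m_\cA(\nu_2)$ guarantees: if it failed, the two canonical density profiles would differ, one of the products would assign only exponentially small mass to $\O_{N,\r}$ by a Cram\'er-type large deviation bound on the $\cA$-magnetization, and the normalization correction would grow linearly in $N$ instead of vanishing---so the equal-magnetization assumption is genuinely essential rather than merely convenient.
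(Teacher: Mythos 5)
Your proposal is correct and follows essentially the same route as the paper: decompose the relative entropy into the log-likelihood term for the product measures plus a normalization correction, control the latter via Corollary \ref{cor order OmegamN}, reduce the former to a single marginal by exchangeability, and conclude by the Kac Chaos estimate. The extra details you add (the $O(\log N)$ width of the normalization term, the support and boundedness check for $\log(\nu_1/\nu_2)$, and the closing remark on necessity of the equal-magnetization hypothesis) are accurate but not a different method.
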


Fisher Chaos is the convergence of the Dirichlet form \eqref{Dirichlet form} when the function $F$ is the density of a canonical tensor product. Recalling the notation from Definition \ref{def:nmlsi}, this can be formulated as follows. 
\begin{proposition}[Fisher Chaos]\label{Fisher Chaos}
    Fix $h\in\G_\cA$, and let $\mu:=\mu_{J,h}$. Suppose  $f\in\cS_h$ is such that the  measure \(\nu := f \mu \in \mathcal{P}(\Omega)\) is irreducible. 
        Let $\r$ denote the common canonical density of $\nu$ and $\mu$, and define
    \begin{equation}\label{observable approximat}
        F_N := \frac{\gamma_N(\nu)}{\gamma_N(\mu)}.
    \end{equation}
    Then,
    \begin{equation}\label{observable approximat2}
    \lim_{N \to \infty} \frac{1}{N}\, \mathcal{E}_{N, J,\r}(F_N, \log F_N) = 2\mathcal{D}_\mu(f).
    \end{equation}
\end{proposition}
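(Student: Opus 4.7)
The approach exploits the explicit product structure of $F_N$: since $f\in\mathcal{S}_h$ and $\mu=\mu_{J,h}$ share the same magnetization profile, $\nu=f\mu$ and $\mu$ admit a common canonical density $\rho$, so that
\[
F_N(\sigma) \;=\; c_N \prod_{u=1}^N f(\sigma(u)), \qquad c_N := \frac{\mu^{\otimes N}(\Omega_{N,\rho})}{\nu^{\otimes N}(\Omega_{N,\rho})}\,,
\]
and in particular $F_N\,\mu_{N,J,\rho}=\gamma_N(\nu)$. (Note: because $h\in\Gamma_{\mathcal{A}}$ is constant on every $A\in\mathcal{A}$, conditioning the product $\mu_{J,h}^{\otimes N}$ on $\Omega_{N,\rho}$ eliminates the field $h$, so $\gamma_N(\mu)=\mu_{N,J,\rho}$.) The plan is then to use this identity to rewrite the $N$-particle Dirichlet form as a $\gamma_N(\nu)$-expectation of a bounded two-particle observable, and then invoke Kac Chaos (Proposition \ref{Kac Chaos}) to produce the desired limit.

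To carry this out I would first compute the gradients. For $i\neq j$, with $\tau=S_\ell(\sigma(i),\sigma_k(j))$ and $\tau'=S_k(\sigma(j),\sigma_\ell(i))$,
\[
\nabla^{i,\ell;j,k}F_N = F_N(\sigma)\!\left[\frac{f(\tau)f(\tau')}{f(\sigma(i))f(\sigma(j))}-1\right],\qquad \nabla^{i,\ell;j,k}\log F_N = \log\frac{f(\tau)f(\tau')}{f(\sigma(i))f(\sigma(j))},
\]
so the integrand in \eqref{Dirichlet form} factors as $F_N(\sigma)\,\bar h(\sigma(i),\sigma(j);\ell,k)$, where $\bar h(\sigma,\sigma';\ell,k):=(\theta-1)\log\theta$ with $\theta:=f(\tau)f(\tau')/(f(\sigma)f(\sigma'))$ depends only on two particle configurations and is uniformly bounded, since $\Omega$ is finite and $f>0$. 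Using $F_N\mu_{N,J,\rho}=\gamma_N(\nu)$, permutation symmetry in the particle labels, and the fact that $\hat p_J=p_J$ whenever $i\neq j$, the off-diagonal contribution to $\tfrac{1}{N}\mathcal{E}_{N,J,\rho}(F_N,\log F_N)$ simplifies to
\[
\frac{N-1}{2Nn}\sum_{\ell,k}\mathcal{K}(\ell,k)\;\gamma_N(\nu)\!\left[p_J(\ell,k\,|\,\sigma(1),\sigma(2))\,\bar h(\sigma(1),\sigma(2);\ell,k)\right].
\]
The diagonal $i=j$ contribution has only $N$ summands of the same bounded type, so after the overall $1/(2N^2n)$ factor it is $O(1/N)$ and vanishes.

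The final step invokes Kac Chaos: by Proposition \ref{Kac Chaos}, $P_2\gamma_N(\nu)\to\nu\otimes\nu$ in total variation, and since $p_J\,\bar h$ is bounded, $\gamma_N(\nu)[p_J\bar h]\to\nu^{\otimes 2}[p_J\bar h]$. Writing $\nu=f\mu$, the factors $f(\sigma)f(\sigma')$ hidden in $\nu(\sigma)\nu(\sigma')$ cancel the denominator of $\bar h$, giving
\[
\nu^{\otimes 2}[p_J\bar h]=\sum_{\sigma,\sigma'}\mu(\sigma)\mu(\sigma')\,p_J(\ell,k\,|\,\sigma,\sigma')\,[f(\tau)f(\tau')-f(\sigma)f(\sigma')]\,\log\frac{f(\tau)f(\tau')}{f(\sigma)f(\sigma')},
\]
and summing against $\mathcal{K}(\ell,k)$ reproduces $4n\,\mathcal{D}_\mu(f)$, since the rejection part of the collision kernel $\mathcal{Q}_J^\mathcal{K}$ in Lemma \ref{H-theorem} contributes zero. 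Combined with $(N-1)/(2Nn)\to 1/(2n)$, this yields the claimed limit $2\mathcal{D}_\mu(f)$. The main obstacle is conceptual rather than technical: one must recognize that the product form of $F_N$ collapses $\nabla F_N\cdot\nabla\log F_N$ into $F_N$ times a bounded two-particle function, thereby reducing the full $N$-particle expectation to a two-particle one where Kac Chaos applies. Once this identification is made, the strict positivity of $f$ on the finite set $\Omega$ makes $\bar h$ uniformly bounded, so the total-variation form of Kac Chaos is sufficient and no uniform integrability subtleties arise.
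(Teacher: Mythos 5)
Your proposal reaches the right limit, but it genuinely \emph{differs} from the paper's proof in how the $\log F_N$ dependence is handled.  You compute the product $\nabla^{i,\ell;j,k}F_N \cdot \nabla^{i,\ell;j,k}\log F_N = F_N(\sigma)\,(\theta-1)\log\theta$ where $\theta = f(\tau)f(\tau')/(f(\sigma(i))f(\sigma(j)))$ depends only on the $i$-th and $j$-th particles, and then pull the factor $F_N$ into the reference measure via $F_N\,\mu_{N,J,\rho}=\gamma_N(\nu)$ (which indeed requires — and you correctly note — that $\gamma_N(\mu)=\mu_{N,J,\rho}$ because $h\in\Gamma_\cA$ contributes only a constant on $\Omega_{N,\rho}$).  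This collapses the $N$-particle average to an expectation of a genuine two-particle observable under $\gamma_N(\nu)$, so that Kac Chaos (Proposition~\ref{Kac Chaos}) finishes the job.  The paper instead rewrites the Dirichlet form as $-\sum_{i,j}\mu_{N,J,\rho}[\hat p_J\,\nabla^{i,\ell;j,k}F_N\,\log F_N]$, expands $\log F_N$ as a constant plus $\log(f(\sigma(1))f(\sigma(2)))$ plus $\sum_{s\neq 1,2}\log f(\sigma(s))$, and invokes an antisymmetry of the flux $\phi(\tau,\tau')$ under the collision swap to kill the first and third pieces; it then uses the local CLT (Proposition~\ref{CLT adapted}) directly on conditional partition-function ratios.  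Your gradient identity sidesteps the antisymmetrization entirely, since the constant and the untouched factors cancel automatically in $\nabla\log F_N$; and invoking Kac Chaos instead of the local CLT keeps the whole argument at the level of a single two-particle marginal.  That is a real simplification and a legitimate alternative route.

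One point to sharpen: you assert that $\bar h = (\theta-1)\log\theta$ is uniformly bounded because ``$\Omega$ is finite and $f>0$''.  The hypothesis of the proposition is only that $\nu = f\mu$ is \emph{irreducible} (Definition~\ref{def:irred}), which does not entail $f>0$ on all of $\Omega$.  Even restricted to the support of $\gamma_N(\nu)$ (where $f(\sigma(1)),f(\sigma(2))>0$), the post-collision configurations $\tau=S_\ell(\sigma(1),\sigma_k(2))$, $\tau'=S_k(\sigma(2),\sigma_\ell(1))$ can land outside the support, making $\theta=0$ and $\bar h=+\infty$.  When that happens the claim should be read as $\infty=\infty$ (both $\mathcal{E}_{N,J,\rho}(F_N,\log F_N)$ and $\mathcal{D}_\mu(f)$ blow up), but your total-variation-plus-boundedness argument breaks down.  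The cleanest fix is to prove the proposition for $f>0$ (which is all that is used in the proof of Theorem~\ref{th:unmlsi}, where one regularizes via $f_\varepsilon=(1-\varepsilon)f+\varepsilon$), or to argue the $\infty=\infty$ case separately.  The paper's own proof has the same implicit reliance on positivity, so this is not a flaw specific to your approach — but you should not advertise boundedness as automatic.
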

\begin{proof}
For any $F:\O^N\to \bbR_+$ we write 
$
\cE_{N,J,\r}(F,\log F)=\frac1n\sum_{\ell,k\in[n]}\cK(\ell,k)\cE^{\ell,k}_{N,J,\r}(F),
$ where
\begin{eqnarray}
    \cE^{\ell,k}_{N,J,\r}(F):=-\frac{1}{N}\sum_{i,j=1}^N\mu_{N,J,\r}\left[\hat p_J(\ell,k|\sigma(i),\sigma(j))\nabla^{i,\ell;j,k}F\log F\right].
\end{eqnarray} 
Similarly, 
$\cD_\mu(f)=\frac1n\sum_{\ell,k\in[n]}\cK(\ell,k)\cD_\mu^{\ell,k}(f)$, where 
\begin{align}
    \cD_\m^{\ell,k}(f)&:=-\frac12\sum_{\tau,\tau'\in\Omega}\mu(\tau)\mu(\tau')p_J(\ell,k|\tau,\tau') \,\times\;\nonumber\\
    &\qquad \;\times \,\left(f\big(S_\ell(\tau,\tau'_k)\big)f\big(S_k(\tau',\tau_\ell)\big)-f(\tau)f(\tau')\right)\log\big(f(\tau)f(\tau')\big).
\end{align}
Therefore, 
to prove \eqref{observable approximat2} it is sufficient to show that for all fixed $n$ and $\ell,k\in[n]$, with $F_N$ as in \eqref{observable approximat},
    \begin{equation}\label{observable approximat21}
    \lim_{N \to \infty} \frac{1}{N}\,  \cE^{\ell,k}_{N,J,\r}(F_N) = 2\cD_\m^{\ell,k}(f).
    \end{equation}
 Since \(F_N\) and $\mu_{N,J,\r}$ are invariant under particle permutations, 
 \begin{align*}
  \frac1{N} \cE^{\ell,k}_{N,J,\r}(F_N)    = \frac{N - 1}{N} \cE_{1,2}(F_N) +  \frac1{N} \cE_{1}(F_N)\,,
\end{align*}
where
 \begin{gather*}
 \cE_{1,2}(F_N)  = -\mu_{N,J,\r}\left[p_J(\ell,k|\sigma(1),\sigma(2))\nabla^{1,\ell;2,k}F_N\log F_N\right]\,,
\\
 \cE_{1}(F_N)  = -\mu_{N,J,\r}\left[\hat p_J(\ell,k|\sigma(1),\sigma(1))\nabla^{1,\ell;1,k}F_N\log F_N\right]\,.
\end{gather*}
Thus, it suffices to show that 
\begin{gather}
    \lim_{N \to \infty}  \cE_{1,2}(F_N)= 2\cD_\m^{\ell,k}(f)\,,\label{eq:topo1}\\
    \lim_{N \to \infty}  \frac1N \,\cE_{1}(F_N)=0\label{eq:topo2}\,.
\end{gather}
For any configuration \(\sigma \in \Omega_{N,\r}\),
\[
F_N(\sigma) = \frac{\mu^{\otimes N}\big( \Omega_{N,\r} \big)}{\nu^{\otimes N}\big(\Omega_{N,\r} \big)} \prod_{s = 1}^N f\big( \sigma(s) \big),
\]
so that
\begin{align*}
\cE_{1,2}(F_N) 
&= -\sum_{\sigma \in \Omega_{N,\r}} \frac{\mu(\si(1))\mu(\si(2))\prod_{s \neq 1,2} \nu\big( \sigma(s) \big)}{\nu^{\otimes N}\big( \Omega_{\r_N}^{\mathcal{A}} \big)}\,
p_J(\ell,k\tc\si(1),\si(2))\,\log\left(F_N(\si) \right) \;\times
\\
&\qquad \times 
\left[ f( S_\ell( \sigma(1), \sigma(2)_k)) f( S_k( \sigma(2), \sigma(1)_\ell) ) - f( \sigma(1) ) f( \sigma(2) ) \right].
\end{align*}
We decompose $ \Omega_{N,\r}$ by fixing the values \(\t=\sigma(1), \t'=\sigma(2) \),
\begin{equation}\label{decompOmN}
 \Omega_{N,\r} = \cup_{\t,\t'\in \Omega} \Omega_{N,\r}( \t,\t'),
\end{equation}
where \(\Omega_{N,\r}( \t,\t')\) denotes the set of configurations \((\sigma(s))_{s \neq 1,2}\in\O^{N-2}\) such that
\[
\sum_{\ell \in A} \sum_{s \neq 1,2} \sigma_\ell(s)= 
(N - 2) |A| \left(2\widetilde{\r}(A\tc\t,\t')-1\right), \qquad A \in \mathcal{A},
\]
where the adjusted density vector \(\widetilde{\r}(\cdot\tc\t,\t')\in\cD_{\cA,N-2}\) satisfies 
\[
\left(2\widetilde{\r}(A\tc\t,\t')-1\right)= \frac{N}{N - 2}(2 \r(A)-1) -   \frac{1}{(N - 2) |A|} \sum_{\ell \in A} (\t_\ell + \t'_\ell)
.
\]
Note that $\widetilde{\r}(A\tc\t,\t') = \r(A) + O(1/N)$ for all $A$ and $\t,\t'\in\O$. In particular, by Proposition \ref{CLT adapted}, for all $\t,\t'\in\O$,
\begin{equation}\label{decompOmN1}
\lim_{N \to \infty} \frac{\nu^{\otimes {N-2}}( \Omega_{N,\r}( \t,\t'))}{\nu^{\otimes N}( \Omega_{N,\r})} = 1.
\end{equation}
From the decomposition \eqref{decompOmN}, 
\begin{align*}
&\cE_{1,2}(F_N) 
=-\sum_{\t,\t'\in\O}
 \mu(\t)\mu(\t')p_J(\ell,k\tc\t,\t')\left[ f( S_\ell(\t, \t'_k)) f( S_k( \t', \t_\ell) ) - f( \t ) f( \t' ) \right]
 \;\times
\\
&\; \times 
\sum_{\eta \in \Omega_{N,\r}( \t,\t')}\frac{\prod_{s \neq 1,2} \nu\big( \eta(s) \big)}{\nu^{\otimes N}\big( \Omega_{N,\r} \big)}
\,\left[\log \frac{\mu^{\otimes N}\big( \Omega_{N,\r} \big)}{\nu^{\otimes N}\big(\Omega_{N,\r} \big)}  + \log (f(\t)f(\t')) + \log\left(\prod_{s\neq 1,2}f(\eta(s)) \right)\right].
\end{align*}
Using reversibility as in Lemma \ref{lem:db}, we see that the function
\[
\phi(\t,\t'):=\mu(\t)\mu(\t')p_J(\ell,k\tc\t,\t')\left[ f( S_\ell(\t, \t'_k)) f( S_k( \t', \t_\ell) ) - f( \t ) f( \t' ) \right]
\]
is antisymmetric in the exchange $(\t,\t')\leftrightarrow(S_\ell(\t, \t'_k),S_k( \t', \t_\ell) )$. On the other hand, \[ 
 \Omega_{N,\r}( \t,\t')= \Omega_{N,\r}(S_\ell(\t, \t'_k),S_k( \t', \t_\ell) )\,.\]  It follows that
\begin{align*}
&\cE_{1,2}(F_N) 
=-\sum_{\t,\t'\in\O}\frac{\nu^{\otimes {N-2}}( \Omega_{N,\r}( \t,\t'))}{\nu^{\otimes N}( \Omega_{N,\r})} \,\phi(\t,\t') \log (f(\t)f(\t'))
.
\end{align*}
From \eqref{decompOmN1} we obtain
\begin{align*}
&\lim_{N\to\infty}\cE_{1,2}(F_N) 
=-\sum_{\t,\t'\in\O}\phi(\t,\t') \log (f(\t)f(\t')) = 2\cD_\mu(f)\,.
\end{align*}
This proves \eqref{eq:topo1}. To prove \eqref{eq:topo2}
we proceed in a similar way. We decompose $ \Omega_{N,\r}=\cup_{\t\in \Omega} \Omega_{N,\r}( \t)$ by fixing \(\t=\sigma(1) \),
where \(\Omega_{N,\r}( \t)=\Omega_{N-1,\widetilde\r(\cdot\tc\t)}\) denotes the set of configurations \((\sigma(s))_{s \neq 1}\in\O^{N-1}\) with fixed density $\widetilde\r(\cdot\tc\t)$ such that
$\widetilde{\r}(A\tc\t) = \r(A) + O(1/N)$ for all $A$ and $\t\in\O$.
Reasoning as above, 
it follows that
\begin{align*}
&\cE_{1}(F_N) 
=-\sum_{\t\in\O}\frac{\nu^{\otimes {N-1}}( \Omega_{N,\r}( \t))}{\nu^{\otimes N}( \Omega_{N,\r})} \,\phi(\t) \log (f(\t))
,
\end{align*}
where 
\[
\phi(\t)=\mu(\t)\hat p_J(\ell,k\tc\t,\t)\left[ f(\t^{\ell,k}) - f( \t ) \right]\,.
\]
Therefore, \[
\lim_{N\to\infty}
\cE_{1}(F_N)  = -\sum_{\t\in\O} 
\phi(\t) \log (f(\t))\,.
\]
This implies \eqref{eq:topo2}.
\end{proof}

\subsection{Proof of Theorem \ref{Central Theorem}}
As we have seen in Section \ref{sec:nmlsi}, it is sufficient to prove Theorem \ref{th:unmlsi}.
Take $f\in \cS_h$, for a fixed $h\in\G_\cA$, and write $F_N$ for the function in Proposition \ref{Fisher Chaos}. Set
$\mu=\mu_{J,h}$, and assume that $\nu=f\mu$ is irreducible in the sense of Definition \ref{def:irred}. Therefore, we know that
\begin{gather}\label{Dirichlet form31}
\cD_\mu(f) = \lim_{N\to\infty} \frac1N\,\mathcal{E}_{N, J,\r}(F_N, \log F_N)\,. 
\end{gather}
On the other hand, by Theorem \ref{th:MLSI}, $\mathcal{E}_{N, J,\r}(F_N, \log F_N) \ge \a\,\ent_{N, J,\r}F_N$, for all $N$, with $\a\ge \frac1{4n}(1-2\l(J))^2e^{-16\bar J}$.
By Proposition \ref{Entropic Chaos} with $\nu_1=\nu$ and $\nu_2=\mu$, one has the convergence \[
 \lim_{N\to\infty}\frac1N\,\ent_{N, J,\r}F_N 
= \mu[f\log f]\,.\] It follows that 
\begin{gather}\label{Dirichlet form32}
\cD_\mu(f) \ge \a\,\mu[f\log f] \,. 
\end{gather}
This establishes the desired bound for all $f\in\cS_h$ such that $\nu=f\mu$ is irreducible. If $\nu$ is not irreducible, we proceed by regularising it  as follows. 
For any \( \varepsilon \in(0,1) \), define
\[
f_\varepsilon = (1 - \varepsilon)f + \varepsilon.
\]
Note that \(f_\e\in\cS_h\) whenever $f\in\cS_h$. Moreover $\nu_\e:=f_\e\mu$ is irreducible since $\nu_\e(\t)\ge \e\mu(\t)>0$ for all $\t\in\O$.  
Thus, applying the above reasoning to $f_\e$ we obtain
\begin{gather}\label{Dirichlet form33}
\cD_\mu(f_\e) \ge \a\,\mu[f_\e\log f_\e] \,. 
\end{gather}
Taking the limit as $\e\to 0$, by continuity we obtain the desired inequality for $f$.
This proves $\cI(h)\ge \a$ for all $h\in\G_\cA$, which ends the proof.

\subsection{Open problem: exponential ergodicity at all temperatures?}

We conclude with the discussion of some open questions.  
The exponential decay established in Theorem~\ref{Central Theorem} holds under the assumption that the interaction matrix~$J$ is nonnegative definite and has spectral radius \( \lambda(J) < 1/2 \).  
The positivity assumption is, in principle, not restrictive, since one may freely modify the diagonal entries of~\( J \) without affecting the corresponding Ising measure.  
The spectral radius condition, however, imposes that the interaction 
be sufficiently weak, which we interpret as a high-temperature assumption.  

For instance,  in the case of the Curie-Weiss model, where 
 $J_{i,j} = \b/n$ for all $i,j\in[n]$, the condition $\l(J)<1/2$ imposes $\b<1/2$ which is a factor $2$ off with respect to the critical value $\b_c=1$. It would be certainly desirable to have exponential decay with rate of order $1/n$ throughout the whole region $\b\in(0,\b_c)$ in this case. 
On the other hand, understanding the behavior of the system in the low temperature region $\b>\b_c$ remains an open problem. 
In particular, it is not even clear whether one should expect a slowdown or not for the nonlinear equation.

 Nevertheless, by analogy with linear evolutions such as low-temperature Glauber or Kawasaki dynamics \cite{Martinelli}, one may ask whether exponential decay to equilibrium still holds for every interaction~$J$, possibly with rates that deteriorate exponentially in~$n$.  
Establishing such a result would amount to an exponential ergodicity property for the nonlinear equation, providing a quantitative counterpart to the plain convergence in Theorem \ref{th:basic}.  


A possible strategy toward this goal is to revisit Kac's program and show that a statement analogous to Theorem~\ref{th:MLSI} remains valid for arbitrary interactions~$J$, with some rate \( \alpha \ge c(J) \) for a constant \( c(J) > 0 \) that may depend on~$J$, possibly exponentially small in~$n$.  
By repeating the reasoning in Section~\ref{sec:MLSI}, and in particular, invoking Proposition~\ref{prop:ent_stab}, such a result would follow if one could prove that the tilted canonical Ising measure \( \mathcal{T}_v \mu_{N,\rho} \) satisfies a covariance bound of the form
\[
\lambda\!\left( \mathrm{Cov}[\mathcal{T}_v \mu_{N,\rho}] \right) \le C(J),
\]
for some constant \( C(J) \) that may grow with~$n$ but remains uniformly bounded in both the linear tilt~$v$ and the number of particles~$N$.  
We conjecture  that such a bound indeed holds for all interactions $J$. 
Using equivalence-of-ensembles estimates, as in Proposition~\ref{Kac Chaos}, one can obtain such bounds for each fixed choice of the tilt~$v$, uniformly in $N$.  
However, establishing estimates that are also uniform in the tilt parameter appears to be a rather challenging problem.

\bigskip

\subsection*{Acknowledgments}
We thank Lorenzo Bertini for valuable suggestions regarding the models studied in this paper. We are also grateful to Alistair Sinclair and Zongchen Chen for many stimulating and helpful discussions.
\bigskip

\end{document}